\documentclass[12pt]{amsart}
  
\usepackage{amsmath}
\usepackage{amsthm}
\usepackage{amssymb}
\usepackage{mathrsfs}

\usepackage{booktabs}
\usepackage{siunitx}
\usepackage{multirow}
\usepackage{cite}

\usepackage[left=3cm,right=3cm, bottom=3cm]{geometry}
\usepackage{array}
\usepackage{enumerate}
\usepackage{color}
\usepackage[colorlinks=true, linkcolor=blue, citecolor=blue]{hyperref}
\numberwithin{equation}{section}
\usepackage{enumitem}
\setlist[enumerate,1]{label={\upshape(\roman*)}}

\theoremstyle{theorem}
\newtheorem{theorem}{Theorem}[section]
\newtheorem{corollary}[theorem]{Corollary}
\newtheorem{proposition}[theorem]{Proposition}
\newtheorem{lemma}[theorem]{Lemma}

\theoremstyle{definition}
\newtheorem{definition}[theorem]{Definition}
\newtheorem{remark}[theorem]{Remark}

\theoremstyle{remark}
\theoremstyle{proof}

\newcommand{\N}{\mathbb{N}}

\newcommand{\R}{\mathbb{R}}

\renewcommand{\P}{\mathbb{P}}

\newcommand{\bE}{\mathbb{E}}

\newcommand{\cA}{\mathcal{A}}

\newcommand{\cF}{\mathcal{F}}
\newcommand{\cG}{\mathcal{G}}
\newcommand{\cP}{\mathcal{P}}
\newcommand{\cT}{\mathcal{T}}

\newcommand{\I}{\mathrm{I}}
\newcommand{\E}{\mathrm{E}}
\renewcommand{\H}{\mathrm{H}}

\newcommand{\wt}{\widetilde}

\newcommand{\varep}{\varepsilon}

\newcommand{\norm}[1]{\| #1 \|}

\newcommand{\Paren}[1]{\left( #1 \right)}

\newcommand{\Abs}[1]{\left| #1 \right|}

\newcommand{\dkol}[1]{d_{\mathrm{K}}(#1)}
\newcommand{\dhel}{d_{\mathrm{H}}}
\newcommand{\dpro}{d_{\mathrm{P}}}
\newcommand{\dlev}{d_{\mathrm{L}}}
\newcommand{\dtv}[1]{d_{\mathrm{TV}}(#1)}

\newcommand{\Tal}{\mathrm{Tal}}
\newcommand{\CE}{\mathrm{CE}}

\newcommand{\Var}{\operatorname{Var}}

\begin{document}

\title{Deficit estimates for the Logarithmic Sobolev inequality}

\author{Emanuel Indrei}
\address{Department of Mathematics, Purdue University, 150 N. University Street, West Lafayette, IN 47907-2067, USA}
\email{eindrei@purdue.edu}

\author{Daesung Kim}

\address{Department of Mathematics, Purdue University, 150 N. University Street, West Lafayette, IN 47907-2067, USA}
\email{daesungkim@purdue.edu}

\date{\today}
\begin{abstract}
We identify sharp spaces and prove quantitative and non-quantitative stability results for the logarithmic Sobolev inequality involving Wasserstein and $L^p$ metrics. The techniques are based on optimal transport theory and Fourier analysis. We also discuss a probabilistic approach.    	
\end{abstract}
\maketitle

\section{Introduction}
For a probability measure $fd\gamma$ absolutely continuous with respect to the Gaussian measure $d\gamma=(2\pi)^{-\frac{n}{2}}e^{-\frac{|x|^{2}}{2}}dx$ such that $\sqrt{f}\in W^{1,2}(\R^{n},d\gamma)$, the Fisher information and the relative entropy are given, respectively, by
\begin{eqnarray*}
	\I(f) &=& \int \frac{|\nabla f|^{2}}{f}d\gamma,\\
	\H(f) &=& \int f\log f d\gamma.
\end{eqnarray*}
The classical logarithmic Sobolev inequality LSI states
\begin{eqnarray}\label{eq:LSI}
	\delta(f)=\frac{1}{2}\I(f)-\H(f)\ge 0,
\end{eqnarray}
where $\delta$ is the LSI deficit. There are several proofs based on the central limit theorem \cite{Gross1975}, Ornstein--Uhlenbeck semigroup \cite{MR1160084}, Prekopka--Leindler inequality \cite{MR1800062}, optimal transport theory \cite{MR1894593}, and harmonic analysis \cite{Carlen1991, MR1254832}. 
 
Carlen characterized the equality cases in two ways: first, if $f \in L^p(\mathbb{R}^{2n})$ is a product function in $(x,y)$ and $\Big(\frac{x+y}{\sqrt{2}}, \frac{x-y}{\sqrt{2}}\Big)$, then $f$ as well as its factors are all Gaussian functions; thereafter, he proved a Minkowski-type inequality and derived the strict superadditivity of the Fisher information which combined with the factorization theorem yields that equality holds in \eqref{eq:LSI} only if $e^{b\cdot x-\frac{b^{2}}{2}},$ $b \in \mathbb{R}^n$. 

The second proof is based on the Beckner--Hirschman entropic uncertainty principle:  
let    
$$
U: L^2(\mathbb{R}^n, dx) \rightarrow L^2(\mathbb{R}^n, dm)
$$
be defined by $Uh(x)=2^{-n/4}e^{\pi|x|^2}h(x)$ and denote the Fourier transform by 
$$\hat f (\xi)=\mathcal{F}f(\xi)=\int e^{-2\pi i\xi \cdot x}f(x)dx.$$
The Fourier-Wiener transform is $\mathcal{W}=U \mathcal{F} U^*$, where $U^*$ is the adjoint of $U$. If $dm=2^{\frac{n}{2}} e^{-2\pi |x|^2}$ and $f \in L^2(dm)$ is normalized, Carlen showed that the entropic uncertainty principle is equivalent to
\begin{equation} \label{carlen}
\int |\mathcal{W}f|^2\log |\mathcal{W}f|^2 dm \le \frac{1}{2\pi} \int |\nabla f|^2 dm -\int |f|^2\log|f|^2 dm=:\delta_c(f),
\end{equation}
and if $f \ge 0$, 
$$
\int |\mathcal{W}f|^2\log |\mathcal{W}f|^2 dm=0
$$
only if
$$
f_a(x)=e^{2\pi(a \cdot x-\frac{|a|^2}{2})},
$$  
where $a \in \mathbb{R}^n$. 
      
Note that a rescaling of \eqref{carlen} improves \eqref{eq:LSI}. In this paper, we are interested in measuring the deviation of a function from the class of optimizers. Let $\cA$ be a family of probability measures and $d$ a metric/functional that identifies the equality cases. We say that LSI is \emph{weakly stable} under $(d,\cA)$ if $\delta(f)\to 0$ ($\delta_c(f)\rightarrow 0)$ implies $d(fd\gamma,d\gamma)\to 0$ ($d(|f|^2dm,dm)\rightarrow 0$) for centered measures and \emph{stable} if a modulus of continuity is explicit. Note that the Gaussian measure is the only centered optimizer.  

There has been much interest devoted to finding stability bounds \cite{Indrei2014, Bobkov2014, MR3758731, Dolbeault2016, Feo2017, MR3665794, arx} (see also \cite{MR3320893, MR3682667}). In particular, the first results involving a metric appeared in \cite{Indrei2014, Bobkov2014} via the quadratic Wasserstein distance
$$
\delta(f) \ge c W_2^2(d\nu_b, d\gamma)
$$
for a class of probability measures $\nu_b$ satisfying certain differential constraints. $W_2$--stability cannot hold for all probability measures since it would improve the sharp LSI constant \cite[Remark 4.3]{Indrei2014}. It was shown in \cite{Fathi2016} that probability measures for which a $(2,2)$-Poincar\'e inequality holds satisfy the following improvement of LSI 
$$
H(f) \le \frac{c(\lambda)}{2}I(f)\\
$$
\noindent where $\lambda$ is the Poincar\'e constant and $c(\lambda)=\frac{1-\lambda+\lambda\log \lambda}{(1-\lambda)^2}<1$; in particular, this yields $W_2$--stability for this function space and 
$$
\int |f-1| d\gamma \le \tilde c_\lambda \delta^{\frac{1}{2}}(f).
$$

The most general class for which $L^1$--stability holds is still not fully understood. To this aim, let 
$$\cP_{2}^{M}(\R^{n})=\{\mu: m_{2}(\mu)\leq M\},$$  
where $M\ge n$ and $m_{2}(\mu)$ is the second moment of the probability measure $\mu$. The following holds.

\begin{theorem}\label{mainthm3}
	Let $f d\gamma$ be a centered probability measure in $\cP_{2}^{M}(\R)$ ($M\ge1$). Then there exists $C_M>0$ such that 
	$$\|f-1\|_{L^1(d\gamma)} \le C_M\delta^{\frac{1}{4}}(f).$$ 
\end{theorem}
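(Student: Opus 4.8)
The plan is to pass from the $L^1$ distance to the Hellinger distance of $\sqrt f$ from $1$; the relative entropy $\H(f)$ is the wrong quantity to control here, since along a family like $(1-2\varepsilon)\,d\gamma+\varepsilon N(R,1)+\varepsilon N(-R,1)$ with $\varepsilon R^2$ fixed one has $\delta(f)\to 0$ while $\H(f)$ stays bounded below, so Pinsker's inequality is hopelessly lossy. Instead, since $\int\sqrt f\,d\gamma\le(\int f\,d\gamma)^{1/2}=1$, writing $u=\sqrt f$ gives, by Cauchy--Schwarz,
\[
  \norm{f-1}_{L^1(d\gamma)}=\int|u-1|\,(u+1)\,d\gamma\le\norm{u-1}_{L^2(d\gamma)}\,\norm{u+1}_{L^2(d\gamma)}\le 2\,\norm{u-1}_{L^2(d\gamma)},
\]
using $\norm{u+1}_{L^2(d\gamma)}^2=\int f\,d\gamma+2\int\sqrt f\,d\gamma+1\le 4$. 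Hence it suffices to prove the stability estimate $\norm{\sqrt f-1}_{L^2(d\gamma)}^2\le C_M\,\delta(f)^{1/2}$. Here $u=\sqrt f\in W^{1,2}(d\gamma)$ is $L^2(d\gamma)$-normalized, $\delta(f)=2\norm{\nabla u}_{L^2(d\gamma)}^2-\int u^2\log u^2\,d\gamma$ is the deficit in the $L^2$ form of the Gaussian logarithmic Sobolev inequality for $u$, and the only centered equality case is $u\equiv 1$.

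In the perturbative regime I would expand around $u\equiv 1$. Writing $u=a_0(1+g)$ with $\int g\,d\gamma=0$ (so $a_0=\langle u,1\rangle_{L^2(d\gamma)}$ and $\norm{\sqrt f-1}_{L^2(d\gamma)}^2=2(1-a_0)\asymp\norm g_{L^2(d\gamma)}^2$) and expanding in Hermite polynomials $g=\sum_{k\ge1}b_k H_k$, a Taylor expansion gives $\delta(f)=2\sum_{k\ge2}(k-1)b_k^2+o(\norm g_{L^2(d\gamma)}^2)$. The degenerate direction $k=1$ (infinitesimal translations) would obstruct a bound, but the centering constraint $\int x\,f\,d\gamma=0$ reads $2b_1+\int x\,g^2\,d\gamma=0$, so $b_1$ is of higher order and does not enter the leading term; since $k-1\ge1$ for $k\ge2$, this yields $\delta(f)\gtrsim\norm g_{L^2(d\gamma)}^2\gtrsim\norm{\sqrt f-1}_{L^2(d\gamma)}^2$ once $\norm{\sqrt f-1}_{L^2(d\gamma)}$ is below a universal threshold — locally one even obtains exponent $1$.

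The main obstacle is the complementary, non-perturbative regime, which I would treat by concentration-compactness: for a sequence $f_k$ with $\delta(f_k)\to0$, $f_k\,d\gamma$ centered and $m_2(f_k\,d\gamma)\le M$, pass to a subsequence; if $f_k\,d\gamma$ concentrates or develops fast oscillation then $\tfrac12\I(f_k)$ dominates $\H(f_k)$ and $\delta(f_k)$ is bounded away from $0$, a contradiction, while otherwise $f_k\,d\gamma$ is tight and of bounded oscillation and one is reduced to the perturbative regime. The second-moment bound is used precisely to control — not merely to exclude — the borderline families in which a vanishing mass drifts to infinity, such as the example above, along which $\delta(f_k)\to0$ while $m_2(f_k\,d\gamma)$ does not return to $1$; this is also why the relative-entropy route is doomed. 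Turning this dichotomy into an explicit modulus of continuity, and in particular tracking the translation mode through the centering constraint using only $\int x^2 f\,d\gamma\le M$, is the hard part, and it is in this quantitative globalization that the exponent drops from the local $\delta^{1/2}$ to the stated $\delta^{1/4}$. An alternative worth trying is the dynamical one: $t\mapsto\norm{\sqrt{P_tf}-1}_{L^2(d\gamma)}^2$ is nonincreasing along the Ornstein--Uhlenbeck flow, and one would bound its dissipation rate by the deficit together with pointwise lower bounds on $P_tf$, $t>0$, afforded by the moment constraint.
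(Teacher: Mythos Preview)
Your plan diverges sharply from the paper's proof, which is short and entirely optimal-transport based. The paper does not touch Hermite expansions, second-variation analysis, concentration--compactness, or the Ornstein--Uhlenbeck semigroup. Instead it uses the Gaussian $(1,1)$-Poincar\'e inequality $\int|f-1|\,d\gamma\le\int|\nabla f|\,d\gamma=\int|\nabla\log f|\,f\,d\gamma$, then splits $\nabla\log f=(\nabla\log f+T-x)-(T-x)$ with $T$ the Brenier map from $f\,d\gamma$ to $d\gamma$. The first piece is controlled by the known deficit bound $\int|T-x+\nabla\log f|^2 f\,d\gamma\le 2\delta(f)$, and the second satisfies $\int|T-x|\,f\,d\gamma=W_1(f\,d\gamma,d\gamma)$ because in dimension $n=1$ the monotone rearrangement is optimal for every $W_p$ (this is exactly where $n=1$ enters). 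The resulting $W_1$ term is then bounded via Theorem~\ref{mainthm1}, which rests on HWI together with Cordero-Erausquin's quantitative Talagrand stability. Everything is explicit, and the exponent $\tfrac14$ drops out of the $W_1^4$ in that result.

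Your proposal, by contrast, has a genuine gap. The perturbative Hermite computation is fine and would even give the stronger local exponent $\tfrac12$, and your observation that the entropy/Pinsker route is doomed is correct. But the non-perturbative regime is not handled: concentration--compactness, as you sketch it, yields at best that $\delta(f_k)\to0$ forces $\|f_k-1\|_{L^1}\to0$ along subsequences --- weak stability, with no modulus. You write that ``turning this dichotomy into an explicit modulus of continuity\ldots is the hard part'' and then do not do it; neither the compactness sketch nor the OU-flow alternative supplies a mechanism to extract a power of $\delta$. Getting a quantitative rate out of soft compactness requires an extra ingredient (an effective tightness bound, a quantitative contradiction argument, or the like), and none is given. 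In short: your local analysis is correct and sharper than needed, but the global step --- where the actual content of the theorem lies --- is missing.
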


This result is false if $p>1$ \cite[Theorem 1.1]{Kim2018}: there exists a sequence of centered probability measures $f_kd\gamma \in \mathcal{P}_2^M(\mathbb{R})$ for which $\delta(f_k) \rightarrow 0$ and 
$$
\displaystyle \liminf_{k \rightarrow \infty} \|f_k-1\|_{L^p(d\gamma)} >0.
$$
A sufficient additional condition for $L^p$--stability is higher integrability.

\begin{corollary}\label{maincor3}
	Let $f d\gamma$ be a centered probability measure in $\cP_{2}^{M}(\R)$ ($M\ge1$) such that $f \in \{f:\int |f|^{2p-1}d\gamma \le N\}$, $p>1$. Then there exists $C=C(M,N,p)>0$ such that 
	$$\|f-1\|_{L^p(d\gamma)} \le C\delta^{\frac{1}{8p}}(f).$$ 
\end{corollary}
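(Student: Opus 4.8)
The plan is to interpolate between the $L^1$ bound of Theorem~\ref{mainthm3} and the hypothesis $\int |f|^{2p-1}\,d\gamma \le N$ using a standard $L^p$-interpolation inequality, after first reducing to a bound on $\|f-1\|_{L^{2p-1}(d\gamma)}$. The key observation is that $\|f-1\|_{L^p}$ can be controlled by $\|f-1\|_{L^1}^{\theta}\|f-1\|_{L^{2p-1}}^{1-\theta}$ for the appropriate $\theta\in(0,1)$ determined by $\frac1p=\frac{\theta}{1}+\frac{1-\theta}{2p-1}$, which gives $\theta=\frac{p-1}{p}\cdot\frac{2p-1}{2p-2}=\frac{2p-1}{2p}$; so $\|f-1\|_{L^p}\le \|f-1\|_{L^1}^{\frac{2p-1}{2p}}\|f-1\|_{L^{2p-1}}^{\frac{1}{2p}}$.

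First I would bound $\|f-1\|_{L^{2p-1}(d\gamma)}$ by a constant depending only on $N$: since $\|f-1\|_{L^{2p-1}}\le \|f\|_{L^{2p-1}}+1 \le N^{\frac{1}{2p-1}}+1=:C_N'$, this is immediate from the integrability hypothesis (using $2p-1\ge 1$, so $\|1\|_{L^{2p-1}(d\gamma)}=1$). Next, Theorem~\ref{mainthm3} applies verbatim to $fd\gamma$ (a centered probability measure in $\cP_2^M(\R)$), giving $\|f-1\|_{L^1(d\gamma)}\le C_M\,\delta^{1/4}(f)$. Substituting both estimates into the interpolation inequality yields
$$
\|f-1\|_{L^p(d\gamma)} \le \big(C_M \delta^{1/4}(f)\big)^{\frac{2p-1}{2p}} (C_N')^{\frac{1}{2p}} \le C(M,N,p)\,\delta^{\frac{2p-1}{8p}}(f).
$$
Since $\frac{2p-1}{8p}\ge \frac{1}{8p}$ and $\delta(f)$ may be assumed bounded (otherwise the inequality is trivial after enlarging the constant, as $\|f-1\|_{L^p}\le C_N'+1$ is already bounded), we can weaken the exponent to $\frac{1}{8p}$ and absorb the loss into $C(M,N,p)$, which gives the stated bound.

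The main (minor) obstacle is bookkeeping: one must check the interpolation exponent $\theta$ lands in $(0,1)$ for all $p>1$ — which it does, as $\frac{2p-1}{2p}\in(\tfrac12,1)$ — and verify that the measure $\gamma$ being a probability measure lets $\|1\|_{L^q(d\gamma)}=1$ for every $q$, so no stray constants appear from the constant function. A small point worth stating carefully is why one may assume $\delta(f)\le 1$ (or any fixed bound) to pass from the exponent $\frac{2p-1}{8p}$ down to $\frac{1}{8p}$: if $\delta(f)>1$ the left side is bounded by the $N$-dependent constant $C_N'+1$ while $\delta^{1/(8p)}(f)>1$, so the inequality holds trivially with $C(M,N,p)\ge C_N'+1$. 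Everything else is routine application of Hölder's inequality and Theorem~\ref{mainthm3}.
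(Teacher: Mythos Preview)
Your approach is essentially the same as the paper's: split $|f-1|^p=|f-1|^{p-\frac12}|f-1|^{\frac12}$ and apply Cauchy--Schwarz (equivalently, interpolate between $L^1$ and $L^{2p-1}$), then bound the $L^{2p-1}$ piece by $N$ and the $L^1$ piece by Theorem~\ref{mainthm3}.

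There is, however, an algebraic slip in your interpolation exponent. Solving $\frac1p=\frac{\theta}{1}+\frac{1-\theta}{2p-1}$ gives $\theta=\frac{1}{2p}$, not $\frac{2p-1}{2p}$; your computation $\frac{p-1}{p}\cdot\frac{2p-1}{2p-2}=\frac{2p-1}{2p}$ is actually the value of $1-\theta$ (the weight on $L^{2p-1}$), and you have swapped the roles of $\theta$ and $1-\theta$ in the inequality. With the corrected exponent one obtains
\[
\|f-1\|_{L^p(d\gamma)}\le \|f-1\|_{L^1(d\gamma)}^{1/(2p)}\,\|f-1\|_{L^{2p-1}(d\gamma)}^{(2p-1)/(2p)}\le C(M,N,p)\,\delta(f)^{\frac{1}{4}\cdot\frac{1}{2p}}=C(M,N,p)\,\delta(f)^{\frac{1}{8p}},
\]
which is exactly the stated bound, so your ``weakening'' step and the case distinction on $\delta(f)\le 1$ become unnecessary. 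Apart from this bookkeeping error the argument is correct and coincides with the paper's proof.
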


Note that $\cP_{2}^{M}(\R^{n})$ contains the space of probability measures satisfying a $(2,2)$-Poincar\'e inequality with parameter $\lambda>0$ for some $M=M(\lambda)>0$. One may tensorize this inequality and also consider the case of product measures.  

\begin{corollary} \label{cor3}
Let $fd\gamma$ be a probability measure such that $$f \in \mathcal{F}_M=\Big\{f: \int |x_i|^2f(x_1,\ldots,x_{i-1},x_i,x_{i+1},\ldots,x_n)d\gamma(x_i) \le M\Big\}$$ ($M \ge 1$) and $x_i \mapsto f(x_1,\ldots,x_{i-1},x_i,x_{i+1},\ldots,x_n)$ is centered.
Then there exists $C=C(n,M)>0$ such that  
$$\int |f-1| d\gamma \le C \delta^{\frac{1}{4}}(f).$$ 
\end{corollary}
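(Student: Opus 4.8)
\emph{Proof strategy.} The plan is to reduce Corollary \ref{cor3} to the one–dimensional Theorem \ref{mainthm3} by tensorizing both the deficit $\delta$ and the $L^1$ distance along the product decomposition $\gamma = \gamma_1 \otimes \cdots \otimes \gamma_n$, where each $\gamma_i$ is the standard Gaussian on $\R$. Write $x = (x_1,\dots,x_n)$, and for $0 \le k \le n$ set
\[
  F_k(x_{k+1},\dots,x_n) = \int f(x)\, d\gamma_1(x_1)\cdots d\gamma_k(x_k),
\]
so that $F_0 = f$, $F_n = 1$, and $F_{k-1} = F_k\, g_k$, where $g_k = g_k^{(x_{k+1},\dots,x_n)}(x_k) := F_{k-1}/F_k$ is, for a.e.\ fixed $(x_{k+1},\dots,x_n)$, a probability density on $\R$ with respect to $\gamma_k$. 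Throughout I will use that $F_k \prod_{j>k} d\gamma_j$ is a probability measure on $\R^{n-k}$.

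\emph{Step 1: tensorization.} The chain rule for relative entropy gives $\H(F_{k-1}) = \H(F_k) + \int F_k\, \H(g_k) \prod_{j>k} d\gamma_j$, whence, summing over $k$, $\H(f) = \sum_{k=1}^n \int F_k\, \H(g_k) \prod_{j>k} d\gamma_j$. For the Fisher information, I would write $\I(f) = \sum_{k=1}^n \int \frac{(\partial_k f)^2}{f}\, d\gamma$ and combine the Cauchy–Schwarz inequality $\big(\int \partial_k f\, d\gamma_1\cdots d\gamma_{k-1}\big)^2 \le \big(\int f\, d\gamma_1\cdots d\gamma_{k-1}\big)\int \frac{(\partial_k f)^2}{f}\, d\gamma_1\cdots d\gamma_{k-1}$ with the identity $\partial_k F_{k-1} = F_k\, \partial_k g_k$ (valid since $F_k$ is independent of $x_k$) to obtain
\[
  \int \frac{(\partial_k f)^2}{f}\, d\gamma \ \ge\ \int \frac{(\partial_k F_{k-1})^2}{F_{k-1}}\, d\gamma_k\cdots d\gamma_n \ =\ \int F_k\, \I(g_k) \prod_{j>k} d\gamma_j .
\]
Combining the two displays gives the tensorized lower bound $\delta(f) \ge \sum_{k=1}^n \int F_k\, \delta(g_k) \prod_{j>k} d\gamma_j$, where $\delta(g_k)$ is the one–dimensional deficit; since each summand is nonnegative by one–dimensional LSI, each is in particular $\le \delta(f)$. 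On the $L^1$ side, the telescoping identity $f - 1 = \sum_{k=1}^n (F_{k-1} - F_k)$ together with $F_{k-1} - F_k = F_k(g_k - 1)$ gives, by the triangle inequality, $\|f-1\|_{L^1(d\gamma)} \le \sum_{k=1}^n \int F_k\, \|g_k - 1\|_{L^1(d\gamma_k)} \prod_{j>k} d\gamma_j$.

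\emph{Step 2: applying the one–dimensional result to the slices.} Centeredness of $x_k \mapsto f$ yields $\int x_k F_{k-1}\, d\gamma_k = \int x_k f\, d\gamma_1\cdots d\gamma_k = 0$, so each $g_k$ is centered; and $f \in \mathcal{F}_M$ yields $\int x_k^2 F_{k-1}\, d\gamma_k = \int x_k^2 f\, d\gamma_1\cdots d\gamma_k \le M$, i.e.\ $m_2(g_k) \le M/F_k$. Hence Theorem \ref{mainthm3} applies to $g_k$ with parameter $\max(1, M/F_k)$, giving $\|g_k - 1\|_{L^1(d\gamma_k)} \le C_{\max(1,M/F_k)}\, \delta(g_k)^{1/4}$. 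The one genuine obstacle is that the conditional second moment $m_2(g_k)$ degenerates like $1/F_k$ on low–probability slices, so no crude uniform application of Theorem \ref{mainthm3} is available; one must instead quantify the $M$–dependence of its constant. Granting (from its proof) that $C_M$ may be taken of the form $C(1+\sqrt M)$ — more generally, polynomial growth of degree $\le \tfrac34$ suffices — we get $F_k\, C_{\max(1,M/F_k)} \lesssim F_k + \sqrt{M}\, F_k^{1/2}$, and then two applications of Hölder's inequality on the probability space $\big(\R^{n-k}, F_k\prod_{j>k} d\gamma_j\big)$ — one for the $F_k$ term (exponents $4$ and $\tfrac43$) and one for the $F_k^{1/2}$ term (after which $\int F_k^{1/3}\prod_{j>k}d\gamma_j \le 1$) — together with the bound $\int F_k\, \delta(g_k)\prod_{j>k}d\gamma_j \le \delta(f)$ from Step 1 give
\[
  \int F_k\, \|g_k - 1\|_{L^1(d\gamma_k)} \prod_{j>k} d\gamma_j \ \lesssim\ (1 + \sqrt{M})\,\delta(f)^{1/4}.
\]
Summing over $k = 1,\dots,n$ yields $\|f-1\|_{L^1(d\gamma)} \le C(n,M)\, \delta(f)^{1/4}$ with $C(n,M) \sim n(1+\sqrt M)$, as claimed. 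The main obstacle, as indicated, is precisely the degeneration of $m_2(g_k)$ on thin slices, which is absorbed because it is always integrated against the probability measure $F_k\prod_{j>k}d\gamma_j$; everything else is the standard tensorization calculus (the entropy chain rule and the marginal Cauchy–Schwarz inequality for the Fisher information) plus Hölder's inequality.
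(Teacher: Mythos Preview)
Your argument is correct, but it takes a genuinely different route from the paper. The paper does \emph{not} pass to the conditional densities $g_k=F_{k-1}/F_k$. Instead it applies Theorem~\ref{mainthm3} directly to the one--variable slice $g_{x'}(x_i)=f(x_1,\dots,x_n)$ with the remaining coordinates $x'$ frozen, obtaining
\[
\int g_{x'}\log g_{x'}\,d\gamma(x_i)+c\Big(\int|g_{x'}-1|\,d\gamma(x_i)\Big)^4\le \tfrac12\int\frac{(\partial_{x_i}g_{x'})^2}{g_{x'}}\,d\gamma(x_i),
\]
integrates over $x'$, and uses that the left integrated entropy term equals the full $\H(f)$ while the right side is only the $i$-th component of $\I(f)$ and hence $\le\tfrac12\I(f)$. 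This immediately gives $\int\big(\int|g_{x'}-1|\,d\gamma(x_i)\big)^4d\gamma(x')\le c\,\delta(f)$, and Jensen finishes. No chain rule, no marginal Cauchy--Schwarz, and---crucially---no tracking of the $M$--dependence of $C_M$: since $g_{x'}$ itself carries the moment bound $\int x_i^2 g_{x'}\,d\gamma(x_i)\le M$, the constant is uniformly $C_M$ on every slice, and the ``genuine obstacle'' you identify never arises.

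The trade--off is that the paper's short argument tacitly requires each slice $g_{x'}\,d\gamma(x_i)$ to be a \emph{probability} measure, i.e.\ $\int f\,d\gamma(x_i)=1$ for a.e.\ $x'$; this is what the paper is reading into ``$x_i\mapsto f$ is centered.'' Your conditioning approach dispenses with that normalization and proves the result under the weaker reading of the hypotheses, at the cost of the extra work in Step~2. One small correction on that step: tracing the proofs of Theorems~\ref{mainthm1} and~\ref{mainthm3} gives $C_M\lesssim (1+M)^{1/4}$ rather than $(1+M)^{1/2}$ (the constant $C(1,M)$ in Theorem~\ref{mainthm1} decays like $(1+M)^{-1}$, and one takes a fourth root); since $1/4\le 3/4$ your H\"older argument goes through unchanged, and in fact with room to spare.
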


\begin{corollary} \label{cor4}
Suppose $f(x_1,\ldots,x_n)=\Pi_{i=1}^n f_i(x_i)$, where $f_i \in \cP_{2}^{M}(\R)$ ($M \ge 1$) and $fd\gamma$ is a centered probability measure.   
Then there exists $C=C(n,M)>0$ such that 
$$\int |f-1| d\gamma \le C \delta^{\frac{1}{4}}(f).$$ 
\end{corollary}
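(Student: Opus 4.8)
The plan is to exploit the product structure to reduce the statement to the one-dimensional Theorem~\ref{mainthm3}, using that the LSI deficit tensorizes \emph{exactly} on product measures. We may assume $\delta(f)<\infty$, so that $\I(f)$ and $\H(f)$ are finite.

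First I would record the tensorization identities. Writing $d\gamma=\prod_{i=1}^{n}d\gamma_1(x_i)$ for the product of one-dimensional standard Gaussians and using $\partial_j f=f_j'(x_j)\prod_{i\neq j}f_i(x_i)$ together with $\int f_i\,d\gamma_1=1$, a direct computation gives $\I(f)=\sum_{i=1}^{n}\I(f_i)$ and $\H(f)=\sum_{i=1}^{n}\H(f_i)$, hence
$$\delta(f)=\tfrac12\I(f)-\H(f)=\sum_{i=1}^{n}\Big(\tfrac12\I(f_i)-\H(f_i)\Big)=\sum_{i=1}^{n}\delta(f_i),$$
where each $\delta(f_i)\ge0$ by \eqref{eq:LSI} (and each $\I(f_i),\H(f_i)$ is finite). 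Next I would check that each factor satisfies the hypotheses of Theorem~\ref{mainthm3}: by assumption $f_i\in\cP_{2}^{M}(\R)$, and centering of $fd\gamma$ forces centering of every factor, since $0=\int x_i f\,d\gamma=\big(\int x_i f_i\,d\gamma_1\big)\prod_{j\neq i}\int f_j\,d\gamma_1=\int x_i f_i\,d\gamma_1$. Theorem~\ref{mainthm3} then yields $\|f_i-1\|_{L^1(d\gamma_1)}\le C_M\,\delta(f_i)^{1/4}$ for every $i$.

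To transfer the estimate from the factors back to $f$, I would telescope:
$$f-1=\prod_{i=1}^{n}f_i-1=\sum_{k=1}^{n}\Big(\prod_{i=1}^{k-1}f_i\Big)(f_k-1).$$
Since each $f_i\ge0$ and $\int f_i\,d\gamma_1=1$, integrating the $k$-th summand over the variables other than $x_k$ contributes exactly $\|f_k-1\|_{L^1(d\gamma_1)}$, so $\int|f-1|\,d\gamma\le\sum_{k=1}^{n}\|f_k-1\|_{L^1(d\gamma_1)}$. Combining this with the previous step and the elementary inequality $\sum_{k=1}^{n}a_k^{1/4}\le n^{3/4}\big(\sum_{k=1}^{n}a_k\big)^{1/4}$ (concavity of $t\mapsto t^{1/4}$) applied to $a_k=\delta(f_k)$, one obtains
$$\int|f-1|\,d\gamma\le C_M\sum_{k=1}^{n}\delta(f_k)^{1/4}\le C_M n^{3/4}\Big(\sum_{k=1}^{n}\delta(f_k)\Big)^{1/4}=C_M n^{3/4}\,\delta(f)^{1/4},$$
so the claim holds with $C(n,M)=C_M n^{3/4}$.

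I do not expect a serious obstacle once Theorem~\ref{mainthm3} is in hand; the only points requiring care are the bookkeeping that makes the tensorization of $\delta$ an exact identity (rather than merely subadditivity), and the observation that the product factors inherit the centering hypothesis so that the one-dimensional result genuinely applies to each of them. Note in particular that Corollary~\ref{cor3} does not apply directly here, since a product of $\cP_{2}^{M}$ densities need not belong to $\mathcal{F}_M$.
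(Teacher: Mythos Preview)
Your proof is correct and follows essentially the same route as the paper: tensorize the deficit via $\I(f)=\sum_i\I(f_i)$ and $\H(f)=\sum_i\H(f_i)$, apply Theorem~\ref{mainthm3} to each factor, and telescope $\int|f-1|\,d\gamma\le\sum_i\|f_i-1\|_{L^1(d\gamma_1)}$. You are slightly more careful than the paper in explicitly verifying that each $f_i$ inherits the centering hypothesis, and your use of concavity yields the constant $C_M n^{3/4}$ rather than $C_M n$, but the argument is the same.
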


The method of proof is based on two lower bounds on the deficit which depend on the solution to the optimal transportation problem from which one may deduce that the total variation is bounded above in terms of $W_1$ and the deficit and then employing the following $W_1$--stability result.

\begin{theorem}\label{mainthm1}
	Let $fd\gamma$ be a centered probability measure in $\cP_{2}^{M}(\R^{n})$ ($M \ge n$). There exists $C=C(n,M)>0$ such that 
	\begin{eqnarray*}
		\delta(f)
		\geq		C\min\{W_{1}(fd\gamma, d\gamma),W_{1}^{4}(fd\gamma, d\gamma)\}.
	\end{eqnarray*}
\end{theorem}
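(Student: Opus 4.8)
The plan is to extract two complementary lower bounds on the deficit $\delta(f)$ from the optimal transport proof of LSI (following \cite{MR1894593, Indrei2014}), and then combine them to produce a bound in terms of $W_1(fd\gamma,d\gamma)$. Let $T=\nabla\varphi$ be the Brenier map pushing $d\gamma$ forward to $fd\gamma$, so that $\varphi$ is convex and solves the corresponding Monge--Amp\`ere equation $\gamma$-a.e. Writing $\theta=\varphi-\frac{|x|^2}{2}$, one obtains from the change of variables and the pointwise inequality $\log\det(I+D^2\theta)\le \operatorname{tr} D^2\theta$ (valid in the sense of Alexandrov) an identity of the schematic form
\begin{eqnarray*}
\delta(f) \ge \int \Big(\frac{1}{2}|\nabla\theta|^2 - \Delta_A\theta + \text{(entropy terms)}\Big)\,d\gamma,
\end{eqnarray*}
where $\Delta_A\theta$ is the absolutely continuous part of the Laplacian. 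Integrating by parts and using that $\int x\cdot\nabla\theta\, d\gamma$ controls $\int |\nabla\theta|^2 d\gamma$ up to the centering hypothesis, the first bound should read $\delta(f)\ge C\int|\nabla\theta|^2\,d\gamma$ modulo moment-dependent constants; since $T-\mathrm{id}=\nabla\theta$ and $W_2^2(fd\gamma,d\gamma)\le\int|\nabla\theta|^2 d\gamma$, this already gives one half. The second, finer bound should retain the Monge--Amp\`ere defect $\int\big(\operatorname{tr}D^2\theta-\log\det(I+D^2\theta)\big)d\gamma$, which by the elementary inequality $t-\log(1+t)\ge c\min\{t^2,|t|\}$ on the eigenvalues controls a quantity like $\int\min\{|D^2\theta|^2,|D^2\theta|\}\,d\gamma$.

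Next I would pass from these transport-side quantities to $W_1$. The point is that $W_1(fd\gamma,d\gamma)\le \int|\nabla\theta|\,d\gamma \le \big(\int|\nabla\theta|^2 d\gamma\big)^{1/2}$, so the first bound gives $\delta(f)\ge C\,W_1^2$ in the regime where $\int|\nabla\theta|^2 d\gamma$ is small; but when the displacement is large one expects only the weaker (linear in $\delta$) control, which is why the statement takes the form $\delta(f)\ge C\min\{W_1,W_1^4\}$ rather than a single power. Concretely, I would split into the case $W_1\le 1$ and $W_1>1$: in the small regime the fourth power should emerge from chaining $W_1\le\int|\nabla\theta|d\gamma$ with a Poincar\'e-type estimate bounding $\int|\nabla\theta|^2 d\gamma$ in terms of the Hessian defect $\int\min\{|D^2\theta|^2,|D^2\theta|\}d\gamma$ (so that $\int|\nabla\theta|d\gamma$ is comparable to $(\text{defect})^{1/2}$ and hence $W_1^4\lesssim(\text{defect})^2\lesssim \delta$ after using that the defect itself is bounded by $\delta$); in the large regime the linear bound $\delta(f)\ge C W_1$ follows because once $W_1>1$ the transport cost is genuinely large and the entropy/Fisher terms are bounded below by an absolute constant times $W_1$ via the moment bound $m_2(\mu)\le M$. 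The dependence of $C$ on $n$ and $M$ enters through the Poincar\'e/moment estimates and the integrability of $|\nabla\theta|^2$ against $d\gamma$.

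The main obstacle, as I see it, is making the Monge--Amp\`ere defect rigorously control $\int |\nabla\theta|^2\,d\gamma$ with the right power and with constants depending only on $n$ and $M$ — i.e. establishing the "Poincar\'e-type" step above. The map $\varphi$ is only convex (so $D^2\theta$ exists merely in the Alexandrov sense and $\theta$ need not be $C^2$), which forces one to work with the Alexandrov decomposition of the Hessian measure and to justify all integrations by parts on a full-measure set, handling the singular part of $D^2\varphi$ correctly (it has a favorable sign and may simply be discarded). A secondary difficulty is uniformity: one must ensure the second moment bound $m_2(fd\gamma)\le M$ translates into an a priori bound on $\int|\nabla\theta|^2 d\gamma$ (hence on how large the displacement can be), which is what keeps $C$ from degenerating; this is presumably where the hypothesis $M\ge n$ is used, since $m_2(d\gamma)=n$. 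Once these analytic points are in place, assembling the two regimes into the stated $\min$ is bookkeeping.
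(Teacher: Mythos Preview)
Your approach differs from the paper's and contains a genuine gap. The first bound you assert, $\delta(f)\ge C\int|\nabla\theta|^2\,d\gamma$, cannot hold with a constant depending only on $(n,M)$: since $\int|\nabla\theta|^2\,d\gamma=W_2^2(fd\gamma,d\gamma)$, this would be $W_2$--stability on $\cP_2^M(\R^n)$, which is known to fail for every $M>n$ (there exist centered $f_kd\gamma\in\cP_2^{n+\varepsilon}$ with $\delta(f_k)\to0$ and $\liminf_{k\to\infty}W_2(f_kd\gamma,d\gamma)>0$). More concretely, with $T$ pushing $d\gamma$ to $fd\gamma$ as you set it up, the transport computation produces a Monge--Amp\`ere remainder $\int\sum_i(\lambda_i-\log(1+\lambda_i))\,d\gamma$ that is bounded above by the \emph{Talagrand} deficit $\delta_{\Tal}(f)=2\H(f)-W_2^2$, not by $\delta(f)$; there is no additional term dominating $\int|\nabla\theta|^2\,d\gamma$. (If instead you run the transport from $fd\gamma$ to $d\gamma$, as in the Cordero-Erausquin proof of LSI, the two remainders $\tfrac12\int|T-x+\nabla\log f|^2 f\,d\gamma$ and $\int\sum_i(\lambda_i-\log(1+\lambda_i))\,f\,d\gamma$ are both against $f\,d\gamma$, so your Poincar\'e step is unavailable without a pointwise lower bound on $f$; that extra hypothesis is precisely what drives the paper's separate result for $\{f\ge\alpha\}$.) Either way, the Hessian defect alone does not deliver a bound on $\delta(f)$ in terms of $W_1$.

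The paper's argument is different and bypasses these issues. It takes Cordero-Erausquin's quantitative Talagrand inequality $\delta_{\Tal}(f)\ge C_{\CE}\min\{W_1^2,W_1\}$ as a black box, and then uses HWI twice: once (together with $\sqrt{\I(f)}\ge\sqrt{2\H(f)}\ge W_2$) to get $\delta(f)\ge\delta_{\Tal}(f)^2/(16\H(f))$, and once (with Young's inequality and the crude bound $W_2^2\le 2(n+M)$) to get an a priori estimate $\H(f)\le \frac{t}{t-1}\bigl(\delta(f)+(t-1)(n+M)\bigr)$. Combining these yields a quadratic inequality for $\delta(f)$ whose solution is $\delta(f)\ge c(n,M)\,\mathcal{G}\bigl(C_{\CE}K\,\mathcal{G}(W_1)\bigr)$ with $\mathcal{G}(x)=\min\{x,x^2\}$; the exponent $4$ on $W_1$ thus comes from composing $\mathcal{G}$ with itself, not from a Poincar\'e--Hessian chain. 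In short, the missing idea in your plan is the passage from $\delta_{\Tal}$ to $\delta$ via HWI and the moment bound; without it the transport remainders you isolate control only the Talagrand deficit.
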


$W_1$--stability is not true in $$\cP_{1}(\R^{n})=\bigcup_{M>0} \cP_{1}^{M}(\R^{n}),$$ i.e. the constant $C(n,M)$ cannot be taken independent of $M$ \cite[Theorem 1.2]{Kim2018}. 

A scaling argument shows (see e.g. \cite{Dolbeault2016, Bobkov2014})
\begin{eqnarray*}
	2\delta(f)
	\geq \Big(2\H(f)+(m_{2}(\gamma)-m_{2}(\nu))\Big)^{2},
\end{eqnarray*}
cf. Theorem \ref{aat} and \cite[(33)]{MR3758731}.
In particular, if $m_{2}(\gamma)=n\geq m_{2}(\nu)$ and $d\nu=fd\gamma$, then Talagrand's inequality \eqref{ineq:Talagrand} implies 
\begin{eqnarray*}
	2\delta(f)
	\geq W_{2}^{4}(d\nu,d\gamma)+(m_{2}(\gamma)-m_{2}(\nu))^{2}.
\end{eqnarray*}
However, for every $\epsilon>0$, there exists $d\nu_k=f_kd\gamma \in\cP_{2}^{n+\epsilon}(\R^{n})$ such that $\delta(f_k) \rightarrow 0$ as $k \rightarrow \infty$ and
$$ 
\liminf_{k \rightarrow \infty} W_{2}(d\nu_k,d\gamma)>0
$$  
\cite[Theorem 1.1]{Kim2018}.

The proof of Theorem \ref{mainthm1} is based on the observation that the entropy is bounded by the deficit and the second moment via the HWI inequality which combines with a stability bound for Talagrand's inequality \cite{Cordero-Erausquin2017, Fathi2016}. 

The following theorem (also based on an optimal transport approach) does not impose additional regularity assumptions or second moment bounds and yields $L^1$--stability in case that there is an $L^1$ bound on the densities. 

\begin{theorem} \label{talpha}
Let $fd\gamma$ be a centered probability measure such that $f \in \{ f \ge \alpha\}$ for some $\alpha \in (0,1]$. Then 
$$ 
\|\log f-L\|_{L^1(d\gamma)} \le C(\alpha,n) \delta^{\frac{1}{2}}(f),
$$
where $L(x)=a_f\cdot x+b_f$, and $C(\alpha,n)>0.$ Moreover, $|a_f| +|b_f|\le c$ for $c=c(n,\alpha)>0$. 
\end{theorem}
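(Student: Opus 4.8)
The plan is to exploit the optimal transport proof of LSI à la Otto--Villani and extract quantitative information from the deficit when $f$ is bounded below. Let $d\nu=fd\gamma$ and let $T=\nabla\varphi$ be the Brenier map pushing $\nu$ forward to $\gamma$, with $\varphi$ convex; equivalently write $T(x)=x+\nabla\psi(x)$. The starting point is the standard identity/inequality coming from the change of variables in the Monge--Amp\`ere equation: integrating $\log f$ against $d\nu$ and using that $\gamma$ is the target, one obtains a pointwise lower bound for the deficit of the form
\begin{equation*}
\delta(f) \ge \int \Bigl( \tfrac12|\nabla\psi|^2 + \Delta\psi - \log\det(I+D^2\psi) \Bigr)\, f\, d\gamma + (\text{a nonnegative Fisher-type term}),
\end{equation*}
where the integrand $\tfrac12|z|^2 + \mathrm{tr}(A) - \log\det(I+A)$ is nonnegative and controls $\psi$. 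The first step is therefore to make this rigorous (using the Caffarelli regularity theory, or a suitable approximation, exactly as in the Otto--Villani / Cordero-Erausquin argument referenced in the paper) and to read off that $\delta(f)$ dominates a convex coercive functional of $D^2\psi$ and $\nabla\psi$.

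The second step is to upgrade this into control of $\nabla\psi$ in $L^1(d\nu)$, hence in $L^1(d\gamma)$ after using $f\ge\alpha$: since $g(A,z):=\tfrac12|z|^2+\mathrm{tr}(A)-\log\det(I+A)\ge 0$ vanishes only at $A=0,z=0$ and is convex, one gets $\int |\nabla\psi|^2 f\,d\gamma \lesssim \delta(f)$ for the "small" regime and an analogous estimate pairing the second-moment bound with the deficit in the "large" regime — this is the dichotomy that produces the square-root rate and the $\min$ structure seen in Theorem~\ref{mainthm1}. Because $f\ge\alpha$, we may divide: $\int|\nabla\psi|\,d\gamma \le \alpha^{-1}\int |\nabla\psi| f\,d\gamma \le \alpha^{-1}\bigl(\int|\nabla\psi|^2 f\,d\gamma\bigr)^{1/2} \lesssim \alpha^{-1}\delta^{1/2}(f)$.

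The third step connects $\nabla\psi$ back to $\log f$. From the Monge--Amp\`ere equation $f(x) = \frac{\gamma(T(x))}{\gamma(x)}\det DT(x)$ one has the exact formula
\begin{equation*}
\log f(x) = -x\cdot\nabla\psi(x) - \tfrac12|\nabla\psi(x)|^2 + \log\det\bigl(I+D^2\psi(x)\bigr),
\end{equation*}
so $\log f$ differs from the linear function $0$ by terms each controlled (after integration against $d\gamma$, using the second moment bound which itself follows from $\delta(f)$ plus Talagrand as in the excerpt, and Jensen for the $\log\det$ term) by powers of $\int |\nabla\psi|^2 f\,d\gamma$ and $\int g\,d\gamma$. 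Choosing $L(x)=a_f\cdot x+b_f$ to be the best linear approximation — concretely $a_f = \int x\,\log f\,d\gamma / (\text{normalization})$ and $b_f=\int \log f\,d\gamma$, or simply $a_f=b_f=0$ since $\|\log f\|_{L^1(d\gamma)}$ is already small and centered — one gets $\|\log f - L\|_{L^1(d\gamma)}\lesssim \delta^{1/2}(f)$; the bound $|a_f|+|b_f|\le c(n,\alpha)$ then follows because these are explicit continuous functionals of $f$ on the compact-ish set $\{f\ge\alpha\}\cap\{\delta(f)\le 1\}$, or directly from the $L^1$ estimate plus the a priori bounds on moments.

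I expect the main obstacle to be the rigorous justification of the Monge--Amp\`ere change of variables and the pointwise inequality when $f$ is merely bounded below and in $W^{1,2}$ — i.e. the regularity of the Brenier map and the validity of the integration by parts producing the Fisher-information remainder term. This is handled in the literature (Cordero-Erausquin, Gangbo--McCann, and the stability-of-Talagrand papers cited) by approximation, but the bookkeeping of which constants depend on $\alpha$ and $n$ (and not on finer properties of $f$) is the delicate point; in particular one must check that the lower bound $f\ge\alpha$ alone, with no upper bound, suffices to close the division step and to keep $|a_f|,|b_f|$ bounded.
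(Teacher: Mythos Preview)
Your proposal has a genuine gap at its very starting point. The deficit decomposition you write,
\[
\delta(f)\ \ge\ \int\Bigl(\tfrac12|\nabla\psi|^{2}+\Delta\psi-\log\det(I+D^{2}\psi)\Bigr)f\,d\gamma\ +\ (\text{nonnegative Fisher term}),
\]
is not correct: if one carries out the Cordero-Erausquin/Otto--Villani computation with $T=x+\nabla\psi$ pushing $fd\gamma$ onto $d\gamma$, the exact identity is
\[
\delta(f)=\tfrac12\int|T-x+\nabla\log f|^{2}f\,d\gamma+\int\sum_{i}\bigl(\lambda_{i}-\log(1+\lambda_{i})\bigr)f\,d\gamma,
\]
where $\lambda_i$ are the eigenvalues of $D^2\psi$. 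The quadratic term involves $T-x+\nabla\log f$, not $T-x=\nabla\psi$ alone, so your functional $g(A,z)=\tfrac12|z|^{2}+\operatorname{tr}A-\log\det(I+A)$ is not the integrand. Consequently the claim in your second step, $\int|\nabla\psi|^{2}f\,d\gamma\lesssim\delta(f)$, is exactly $W_{2}^{2}(fd\gamma,d\gamma)\lesssim\delta(f)$, and this is \emph{not} available (the paper stresses that $W_{2}$--stability fails in general). Your third step then collapses: in the Monge--Amp\`ere formula $\log f=-x\cdot\nabla\psi-\tfrac12|\nabla\psi|^{2}+\log\det(I+D^{2}\psi)$, the term $-x\cdot\nabla\psi$ is not small and is precisely what produces the nontrivial linear part $a_f\cdot x$; the alternative ``$a_f=b_f=0$'' is not admissible.

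What actually works is to use the two pieces of the correct decomposition separately, together with two applications of the Gaussian Poincar\'e inequality. From the eigenvalue term and $f\ge\alpha$ one obtains $\sum_{i,j}\int|\partial_{j}T^{i}-\delta_{ij}|\,d\gamma\lesssim\delta^{1/2}$; Poincar\'e then gives $\int|T-x-a|\,d\gamma\lesssim\delta^{1/2}$ with $a=\int T\,d\gamma$ (so $T-x$ is close to a constant, not to zero, and $|a|\le\alpha^{-1}\int|x|\,d\gamma$). Combining this with $\int|T-x+\nabla\log f|^{2}f\,d\gamma\le 2\delta$ and $f\ge\alpha$ yields $\int|\nabla\log f+a|\,d\gamma\lesssim\delta^{1/2}$, and a second Poincar\'e gives $\int|\log f+a\cdot x-b|\,d\gamma\lesssim\delta^{1/2}$ with $b=\int\log f\,d\gamma\in(\log\alpha,0]$. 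The missing idea in your outline is this double Poincar\'e step; the regularity issues you flag are real but secondary.
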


\begin{corollary} \label{c1}
Let $\{f_kd\gamma\}$ be a sequence of centered probability measures such that $f_k \in \{ f \ge \alpha\}$ for some $\alpha \in (0,1]$ and $\delta(f_k) \rightarrow 0$ as $k \rightarrow \infty$. Then $f_{k_j} \rightarrow c$ a.e. as $j \rightarrow \infty$, for some subsequence $\{f_{k_j}\} \subset \{f_k\}$ and constant $c\in [\alpha,1]$. 
\end{corollary}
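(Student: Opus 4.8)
The plan is to derive the corollary directly from Theorem~\ref{talpha} by a compactness argument. First I would apply Theorem~\ref{talpha} to each $f_k$ in the sequence: there exist affine functions $L_k(x)=a_k\cdot x+b_k$ with
\[
\|\log f_k-L_k\|_{L^1(d\gamma)}\le C(\alpha,n)\,\delta^{1/2}(f_k)\longrightarrow 0,
\qquad\text{and}\qquad |a_k|+|b_k|\le c(n,\alpha).
\]
Since the pairs $(a_k,b_k)$ lie in a fixed compact subset of $\R^n\times\R$, I would pass to a subsequence along which $(a_{k_j},b_{k_j})\to(a,b)$ and put $L(x)=a\cdot x+b$. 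The crude bound $\|L_{k_j}-L\|_{L^1(d\gamma)}\le |a_{k_j}-a|\int|x|\,d\gamma+|b_{k_j}-b|$ shows $L_{k_j}\to L$ in $L^1(d\gamma)$, so the triangle inequality together with $\delta(f_{k_j})\to 0$ yields $\log f_{k_j}\to L$ in $L^1(d\gamma)$. Extracting one further subsequence (not relabeled) upgrades this to $\log f_{k_j}\to L$ almost everywhere, hence $f_{k_j}\to e^{L}$ almost everywhere.

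The remaining point is to identify $e^{L}$ as a constant in $[\alpha,1]$. Since $f_k\ge\alpha$ a.e.\ for every $k$, after deleting a single null set (the countable union of the exceptional sets along the subsequence) the a.e.\ limit obeys $e^{a\cdot x+b}\ge\alpha>0$. If $a\neq 0$, the half-space $\{x:\ a\cdot x<\log\alpha-b\}$ is nonempty and therefore has strictly positive Gaussian measure, while $e^{L}<\alpha$ on it --- a contradiction. Hence $a=0$, so $e^{L}\equiv e^{b}=:c\ge\alpha$, and Fatou's lemma applied to the nonnegative densities gives $c=\int c\,d\gamma\le\liminf_j\int f_{k_j}\,d\gamma=1$. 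Thus $c\in[\alpha,1]$ and $f_{k_j}\to c$ almost everywhere, which is the claim.

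I do not anticipate a real obstacle: the only steps needing care are the two successive passages to subsequences --- one for the affine coefficients supplied by Theorem~\ref{talpha}, another to convert $L^1$-convergence of $\log f_{k_j}$ into almost everywhere convergence --- and the observation that the pointwise bound $f_k\ge\alpha$ is inherited by the limit along the countable subsequence. This last point is precisely where the hypothesis $f_k\in\{f\ge\alpha\}$ is essential, since it is what forces $a=0$; the centering hypothesis enters only through the application of Theorem~\ref{talpha}.
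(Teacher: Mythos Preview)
Your proposal is correct and follows essentially the same route as the paper: apply Theorem~\ref{talpha}, use the uniform bound on $(a_k,b_k)$ to extract a convergent subsequence, pass from $L^1$ to a.e.\ convergence along a further subsequence, and then use the lower bound $f_k\ge\alpha$ to force $a=0$. Your write-up is in fact slightly more complete than the paper's, since you spell out the half-space argument for $a=0$ and invoke Fatou's lemma to obtain $c\le 1$, whereas the paper leaves these steps implicit.
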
 

\begin{corollary} \label{c2}
Let $\{f_kd\gamma\}$ be a sequence of centered probability measures such that $f_k \in \{ f \ge \alpha\}$ for some $\alpha \in (0,1]$ and $f_k \le g$ for $g \in L^1(d\gamma)$. If $\delta(f_k) \rightarrow 0$ as $k \rightarrow \infty$, then $f_k \rightarrow 1$ in $L^1(d\gamma)$. 
\end{corollary}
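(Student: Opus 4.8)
The plan is to deduce Corollary \ref{c2} from Corollary \ref{c1} together with the dominated convergence theorem. First I would recall what Corollary \ref{c1} already gives: passing to a subsequence, $f_{k_j}\to c$ almost everywhere for some constant $c\in[\alpha,1]$. Since each $f_{k_j}d\gamma$ is a probability measure, $\int f_{k_j}\,d\gamma=1$; combining the pointwise bound $0\le f_{k_j}\le g$ with $g\in L^1(d\gamma)$, the dominated convergence theorem applies and yields $\int c\,d\gamma=\lim_j\int f_{k_j}\,d\gamma=1$, hence $c=1$. The same dominated convergence argument then upgrades a.e.\ convergence to $L^1(d\gamma)$ convergence: $\int|f_{k_j}-1|\,d\gamma\to 0$, again because $|f_{k_j}-1|\le g+1\in L^1(d\gamma)$.

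The remaining point is to remove the passage to a subsequence, i.e.\ to conclude that the \emph{whole} sequence converges in $L^1(d\gamma)$. This is the standard subsequence principle: a sequence in a metric space converges to a limit $\ell$ iff every subsequence has a further subsequence converging to $\ell$. Concretely, suppose $f_k\not\to 1$ in $L^1(d\gamma)$; then there is $\varep>0$ and a subsequence $\{f_{k_j}\}$ with $\|f_{k_j}-1\|_{L^1(d\gamma)}\ge\varep$ for all $j$. But this subsequence still satisfies all the hypotheses of Corollary \ref{c1} (it is a sequence of centered probability measures with $f_{k_j}\ge\alpha$, $f_{k_j}\le g$, and $\delta(f_{k_j})\to 0$), so by the argument of the previous paragraph it has a further subsequence converging to $1$ in $L^1(d\gamma)$, contradicting the uniform lower bound $\varep$. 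Hence $f_k\to 1$ in $L^1(d\gamma)$.

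There is essentially no hard step here; the corollary is a soft consequence of Corollary \ref{c1} plus uniform integrability supplied by the domination $f_k\le g$. The only thing to be mildly careful about is that the a.e.\ limit provided by Corollary \ref{c1} is an unspecified constant in $[\alpha,1]$ rather than $1$ outright, so one genuinely needs the mass normalization $\int f_k\,d\gamma=1$ and the integrable dominating function $g$ to pin the constant down to $1$; without an $L^1$ bound on the densities the mass could escape to infinity, which is exactly the obstruction that the hypothesis $f_k\le g$ removes. I would present the proof in the order above: identify the constant as $1$ via dominated convergence, promote a.e.\ convergence to $L^1$ convergence, then invoke the subsequence principle to handle the full sequence.
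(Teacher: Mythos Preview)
Your proof is correct and follows essentially the same approach as the paper: argue by contradiction via the subsequence principle, apply Corollary \ref{c1} to extract an a.e.\ convergent subsequence with constant limit $c$, use the mass constraint together with dominated convergence to force $c=1$, and then dominated convergence again to obtain $L^1$ convergence and reach the contradiction. Your write-up is in fact slightly more explicit than the paper's, which compresses the last step into the single phrase ``a contradiction.''
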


Weak stability results can also be proved by investigating conditions for which

$$
\int |\mathcal{W}f|^2\log |\mathcal{W}f|^2 dm \rightarrow 0
$$ 
implies
$$
f \rightarrow 1.
$$
This turns out to be the case if the density's growth is controlled. 

\begin{theorem} \label{c5}
Let $\{f_k\}$ be normalized and centered in $L^2(dm)$ and suppose $f_k \in \{f:\int |f|^2e^{-(2\pi-\epsilon) |x|^2}dx \le M\}$, where $\epsilon>0$. Then if $\delta_c(f_k) \rightarrow 0$ as $k\rightarrow \infty$, 
$$
f_k \rightarrow 1
$$ 
in $L^2(dm)$.
\end{theorem}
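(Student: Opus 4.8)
The plan is to argue by contradiction and compactness, extracting from the hypothesis $\delta_c(f_k)\to 0$ both a uniform bound on $\frac{1}{2\pi}\int|\nabla f_k|^2\,dm$ and the vanishing of the Fourier--Wiener entropy term, and then to identify the weak limit as the constant function $1$. First I would record that, since each $f_k$ is normalized in $L^2(dm)$ and $\int|\mathcal{W}f_k|^2\log|\mathcal{W}f_k|^2\,dm\ge 0$ (the entropic uncertainty bound, which is the content of \eqref{carlen} rearranged), the inequality $\int|\mathcal{W}f_k|^2\log|\mathcal{W}f_k|^2\,dm\le\delta_c(f_k)$ forces
\begin{equation*}
\frac{1}{2\pi}\int|\nabla f_k|^2\,dm \;=\; \delta_c(f_k)+\int|f_k|^2\log|f_k|^2\,dm \;\le\; \delta_c(f_k)+C,
\end{equation*}
where the last step uses that $x\log x$ is bounded below and $\|f_k\|_{L^2(dm)}=1$, together with an upper bound on the positive part of $\int|f_k|^2\log|f_k|^2\,dm$. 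This last upper bound is precisely where the growth hypothesis $\int|f_k|^2 e^{-(2\pi-\epsilon)|x|^2}\,dx\le M$ enters: it controls the region where $|f_k|$ is large, so that the entropy integral cannot blow up, and hence $\int|\nabla f_k|^2\,dm$ is uniformly bounded.

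Next I would use the uniform bound on $\int(|\nabla f_k|^2+|f_k|^2)\,dm$ (a weighted Sobolev norm) to extract a subsequence converging weakly in the corresponding weighted $W^{1,2}$ space and strongly in $L^2_{\mathrm{loc}}$ to some limit $f_\infty$; one should also check that the centering and normalization survive in the limit, using the second-moment-type control that the $\int|\nabla f_k|^2\,dm$ bound provides via a weighted Poincaré/Rellich argument to upgrade local to global strong convergence, so that $\|f_\infty\|_{L^2(dm)}=1$ and $f_\infty$ is centered. Then, passing to the limit in $\int|\mathcal{W}f_k|^2\log|\mathcal{W}f_k|^2\,dm\to 0$ — using that $\mathcal{W}$ is an isometry of $L^2(dm)$ and lower semicontinuity of the entropy functional along the convergence — gives $\int|\mathcal{W}f_\infty|^2\log|\mathcal{W}f_\infty|^2\,dm=0$. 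If $f_\infty\ge 0$ we may invoke Carlen's rigidity directly to conclude $f_\infty=f_a$ for some $a\in\R^n$, and centering forces $a=0$, i.e. $f_\infty\equiv 1$; in general one reduces to the nonnegative case by the standard observation that $|\mathcal{W}f|\le\mathcal{W}|f|$ pointwise (so the entropy term for $|f_\infty|$ also vanishes) together with $\||f_\infty|\|_{L^2(dm)}=1$, after which the same rigidity and centering apply to $|f_\infty|$ and then propagate back to $f_\infty$.

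I expect the main obstacle to be the compactness step: the measure $dm=2^{n/2}e^{-2\pi|x|^2}$ is not a probability measure that comes with an off-the-shelf Rellich embedding, and one must be careful that the weighted Sobolev bound genuinely yields \emph{global} $L^2(dm)$ precompactness rather than merely local precompactness with possible escape of mass to infinity. The gap hypothesis $\epsilon>0$ in $\int|f_k|^2 e^{-(2\pi-\epsilon)|x|^2}\,dx\le M$ is exactly the device that prevents this escape — it is strictly stronger than finiteness of $\|f_k\|_{L^2(dm)}$ and provides the tightness needed — so the technical heart of the argument is quantifying how this extra decay, combined with the Dirichlet-energy bound, rules out concentration at infinity and delivers strong $L^2(dm)$ convergence along a subsequence. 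Once that is in hand, the identification of the limit is a routine application of the rigidity already recorded in the discussion of \eqref{carlen}.
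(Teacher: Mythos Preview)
Your first step has a genuine gap. You claim the growth hypothesis $\int |f_k|^2 e^{-(2\pi-\epsilon)|x|^2}\,dx\le M$ bounds the positive part of $\int |f_k|^2\log|f_k|^2\,dm$, but this hypothesis is a \emph{spatial tail} condition, not a control on concentration of $|f_k|$. Rewriting it as $\int|f_k|^2 e^{\epsilon|x|^2}\,dm\le M'$, consider $f_k=c_k\chi_{B(0,r_k)}$ normalized in $L^2(dm)$ with $r_k\to 0$: then $\int|f_k|^2 e^{\epsilon|x|^2}\,dm\le e^{\epsilon r_k^2}\to 1$ stays bounded, yet $\int|f_k|^2\log|f_k|^2\,dm=\log c_k^2\to\infty$. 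So the entropy (and hence the Dirichlet energy) is \emph{not} controlled by the growth hypothesis alone. The bound you want does hold, but it requires a genuinely different argument: the growth hypothesis yields a second moment bound $\int|x|^2|f_k|^2\,dm\le C$, and then Theorem~\ref{aat} (an HWI-type estimate) gives $\frac{1}{2\pi}\int|\nabla f_k|^2\,dm\le C'+2\sqrt{\pi n}\,\delta_c(f_k)^{1/2}+\delta_c(f_k)$. This is a nontrivial additional input, not the soft reasoning you sketched.

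Once that is fixed, your compactness-plus-rigidity route would work, but the paper's proof is considerably more direct and bypasses the Dirichlet energy bound and the Rellich step entirely. The paper observes that the hypothesis is exactly boundedness in $L^2(dm_\epsilon)$, $dm_\epsilon=e^{-(2\pi-\epsilon)|x|^2}dx$, so along a subsequence $f_k\rightharpoonup f$ weakly there. The point of the $\epsilon$ gap is then purely Fourier-analytic: since $x\mapsto e^{-2\pi i\xi\cdot x}(U^*1)(x)=2^{n/4}e^{-2\pi i\xi\cdot x}e^{-\pi|x|^2}$ lies in $L^2(dm_\epsilon)$, weak convergence gives $\widehat{U^*f_k}(\xi)\to\widehat{U^*f}(\xi)$ pointwise, hence $\mathcal{W}f_k(\xi)\to\mathcal{W}f(\xi)$ for every $\xi$. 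Meanwhile $0\le\int|\mathcal{W}f_k|^2\log|\mathcal{W}f_k|^2\,dm\le\delta_c(f_k)\to 0$, so Pinsker forces $|\mathcal{W}f_k|^2\to 1$ in $L^1(dm)$; combined with the pointwise limit, $|\mathcal{W}f|^2=1$ a.e., and Cram\'er's theorem (rather than Carlen's rigidity) gives $f=1$. Finally, weak convergence in $L^2(dm)$ together with $\|f_k\|_{L^2(dm)}=1=\|1\|_{L^2(dm)}$ upgrades to strong convergence. No Sobolev embedding, no Dirichlet bound, and no tightness argument are needed. (A minor further point: your reduction to nonnegative $f$ via ``$|\mathcal{W}f|\le\mathcal{W}|f|$'' is not valid---the Fourier transform does not satisfy such a pointwise inequality---so you should either assume $f_k\ge 0$, as the applications do, or use the Cram\'er route as the paper does.)
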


A change of variables yields the following.

\begin{corollary} \label{corc5}
Let $\{f_kd\gamma\}$ be centered probability measures and suppose $f_k \in \{f:\int fe^{-(\frac{1}{2}-\frac{\epsilon}{4\pi}) |x|^2}dx \le C\}$, where $\epsilon>0$ and $C>0$. Then if $\delta(f_k) \rightarrow 0$ as $k\rightarrow \infty$, 
$$
f_k \rightarrow 1
$$ 
in $L^1(d\gamma)$.
\end{corollary}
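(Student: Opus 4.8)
The plan is to deduce Corollary~\ref{corc5} from Theorem~\ref{c5} via the linear dilation that intertwines $d\gamma$ with $dm=2^{n/2}e^{-2\pi|x|^2}dx$. Writing $c=2\sqrt{\pi}$ (the unique $c>0$ with $2\pi c^{-2}=\tfrac12$), I would associate to each density $f_k$ (nonnegative, since $f_kd\gamma$ is a probability measure) the function $g_k(x):=\sqrt{f_k(cx)}$. The substitution $y=cx$ sends $dm$ to $(2\pi)^{-n/2}e^{-|y|^2/2}\,dy=d\gamma(y)$, because the Gaussian exponents match by the choice of $c$ and the normalizing constants satisfy $2^{n/2}c^{-n}=(2\pi)^{-n/2}$. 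Consequently $\int g_k^2\,dm=\int f_k\,d\gamma=1$, so $g_k$ is normalized in $L^2(dm)$, and $\int x\,g_k^2\,dm=c^{-1}\int y\,f_k\,d\gamma=0$, so $g_k$ is centered.

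Next I would check that the transform preserves the relevant quantities. From $\nabla g_k(x)=\tfrac{c}{2}\,(\nabla f_k)(cx)/\sqrt{f_k(cx)}$ and the same change of variables one gets $\frac{1}{2\pi}\int|\nabla g_k|^2\,dm=\frac{c^2}{8\pi}\I(f_k)=\frac12\I(f_k)$ and $\int|g_k|^2\log|g_k|^2\,dm=\int f_k\log f_k\,d\gamma=\H(f_k)$, hence $\delta_c(g_k)=\delta(f_k)$. Similarly, $\int|g_k(x)|^2e^{-(2\pi-\epsilon)|x|^2}\,dx=c^{-n}\int f_k(y)\,e^{-(\frac12-\frac{\epsilon}{4\pi})|y|^2}\,dy\le c^{-n}C=:M$, since $(2\pi-\epsilon)/c^2=\tfrac12-\tfrac{\epsilon}{4\pi}$. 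So the hypotheses of Theorem~\ref{c5} hold for $\{g_k\}$ with the same $\epsilon$, and as $\delta(f_k)\to0$ forces $\delta_c(g_k)\to0$, Theorem~\ref{c5} yields $g_k\to1$ in $L^2(dm)$.

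Finally I would push this convergence back to $f_k$. Undoing the substitution gives $\int|f_k-1|\,d\gamma=\int|g_k^2-1|\,dm$, and since $g_k\ge0$ we may factor $|g_k^2-1|=|g_k-1|(g_k+1)$ and apply Cauchy--Schwarz in $L^2(dm)$ to obtain $\int|f_k-1|\,d\gamma\le\|g_k-1\|_{L^2(dm)}\,\|g_k+1\|_{L^2(dm)}\le 2\,\|g_k-1\|_{L^2(dm)}$, using that $m$ is a probability measure and $\|g_k\|_{L^2(dm)}=1$, so $\|g_k+1\|_{L^2(dm)}\le 2$. Letting $k\to\infty$ gives $f_k\to1$ in $L^1(d\gamma)$.

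This argument has no genuine obstacle: the analytic content is entirely carried by Theorem~\ref{c5}, and the only points requiring care are fixing the dilation constant $c=2\sqrt{\pi}$ so that $dm$, $d\gamma$, and the two weighted integrability conditions correspond exactly, and observing that the convergence $g_k\to1$ in $L^2(dm)$ upgrades to $f_k=g_k^2\to1$ in $L^1(d\gamma)$ precisely because the $g_k$ are uniformly bounded in $L^2(dm)$.
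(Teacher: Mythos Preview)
Your proof is correct and is precisely the change of variables the paper has in mind: the dilation $x\mapsto 2\sqrt{\pi}\,x$ takes $(d\gamma,\delta,f_k)$ to $(dm,\delta_c,g_k=\sqrt{f_k(2\sqrt{\pi}\,\cdot)})$, matches the weighted integrability conditions exactly, and your final Cauchy--Schwarz step converting $L^2(dm)$-convergence of $g_k$ into $L^1(d\gamma)$-convergence of $f_k$ is the standard Hellinger--total variation comparison \eqref{ineq:L1_L2}.
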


\begin{corollary} \label{corc6}
Let $\{f_kd\gamma\}$ be centered probability measures and suppose $f_k \in \{f:\int fe^{-(\frac{1}{2}-\frac{\epsilon}{4\pi}) |x|^2}dx \le C\} \cap \{f: \int|f|^{2p-1}d\gamma \le N\}$, where $\epsilon>0$ and $C>0$. Then if $\delta(f_k) \rightarrow 0$ as $k\rightarrow \infty$, 
$$
f_k \rightarrow 1
$$ 
in $L^p(d\gamma)$,
$p \ge 1$.
\end{corollary}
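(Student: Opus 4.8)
The plan is to derive Corollary~\ref{corc6} from Corollary~\ref{corc5} together with the uniform higher-integrability bound, by the same interpolation that takes Theorem~\ref{mainthm3} to Corollary~\ref{maincor3}. Observe first that the hypothesis $\int f_k e^{-(\frac12-\frac{\epsilon}{4\pi})|x|^2}\,dx\le C$ is exactly the hypothesis of Corollary~\ref{corc5}; since $\delta(f_k)\to0$, that corollary already gives $f_k\to1$ in $L^1(d\gamma)$, i.e. $\|f_k-1\|_{L^1(d\gamma)}\to0$. Everything then reduces to upgrading this $L^1$ convergence to $L^p$ convergence for $p\ge1$ using the second constraint $\int|f_k|^{2p-1}\,d\gamma\le N$.

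To do so, fix $p\ge1$ and write $\tfrac1p=(1-\theta)\cdot1+\theta\cdot\tfrac1{2p-1}$; this forces $1-\theta=\tfrac1{2p}$, $\theta=\tfrac{2p-1}{2p}$, and indeed $\theta\in[0,1]$ because $1\le p\le 2p-1$. By the log-convexity of the $L^q(d\gamma)$-norms (an instance of H\"older's inequality),
\[
\|f_k-1\|_{L^p(d\gamma)}\ \le\ \|f_k-1\|_{L^1(d\gamma)}^{\,1/(2p)}\;\|f_k-1\|_{L^{2p-1}(d\gamma)}^{\,(2p-1)/(2p)}.
\]
Since $f_k\ge0$ and $\gamma$ is a probability measure, the triangle inequality yields $\|f_k-1\|_{L^{2p-1}(d\gamma)}\le\|f_k\|_{L^{2p-1}(d\gamma)}+1\le N^{1/(2p-1)}+1=:K$, a bound independent of $k$. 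Hence
\[
\|f_k-1\|_{L^p(d\gamma)}\ \le\ K^{(2p-1)/(2p)}\,\|f_k-1\|_{L^1(d\gamma)}^{\,1/(2p)},
\]
which tends to $0$ as $k\to\infty$ by the first step; this is the assertion (the case $p=1$ being Corollary~\ref{corc5} itself).

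There is no genuine obstacle in this argument; the two points that deserve a remark are the verification that $\theta\in[0,1]$, which is precisely the requirement $2p-1\ge1$ and explains why the additional integrability is imposed at the exponent $2p-1$ rather than at $p$, and the passage from a bound on $f_k$ to a bound on $f_k-1$ in $L^{2p-1}(d\gamma)$, which relies on $f_k\ge0$ and $\gamma(\R^n)=1$. If one wanted a quantitative modulus rather than mere convergence, it would suffice to have a quantitative $L^1$ bound in place of Corollary~\ref{corc5}, which would then propagate through the interpolation to a corresponding power of the deficit, exactly as in Corollary~\ref{maincor3}.
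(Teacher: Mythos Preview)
Your argument is correct. The paper does not spell out a proof of this corollary; it is stated immediately after Corollary~\ref{corc5} and is evidently meant to follow from it by the same interpolation that takes Theorem~\ref{mainthm3} to Corollary~\ref{maincor3}, which is exactly what you do. The only cosmetic difference is that the paper's proof of Corollary~\ref{maincor3} writes the interpolation as a single Cauchy--Schwarz step,
\[
\int |f-1|^{p}\,d\gamma=\int |f-1|^{p-\frac12}|f-1|^{\frac12}\,d\gamma\le\Big(\int |f-1|^{2p-1}\,d\gamma\Big)^{\frac12}\Big(\int |f-1|\,d\gamma\Big)^{\frac12},
\]
whereas you invoke the log-convexity of $L^q$-norms with the same endpoints $q=1$ and $q=2p-1$; the two are equivalent. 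One small remark: in your last paragraph, the passage from $\|f_k\|_{L^{2p-1}}$ to $\|f_k-1\|_{L^{2p-1}}$ uses only the triangle inequality and $\gamma(\R^n)=1$; the nonnegativity of $f_k$ is not actually needed there.
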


\begin{remark}
If $p>1$, then for $\epsilon<\frac{2\pi(2(p-1))}{2p-1}$, the constant 
$$C=(2\pi)^\frac{n}{2}N^{\frac{1}{2p-1}}\bigg(\frac{1}{1-\frac{\epsilon(2p-1)}{4\pi(p-1)}}\bigg)^{\frac{n(p-1)}{2p-1}}$$
may be selected.
\end{remark}

Combining the optimal transport method and \eqref{carlen} yields the following inequality, which in particular, implies weak $L^2$--stability for $\cP_{2}^{M}(\R^{n})$ with respect to $\delta_c$ and weak $L^1$--stability with respect to $\delta$.

\begin{theorem} \label{aat} 
Let $f$ be normalized in $L^2(dm)$. Then
$$
2\pi \int |x|^2dm-2\pi \int |x|^2|f|^2dm +\frac{1}{2\pi} \int |\nabla f|^2 dm \le 2\sqrt{\pi n} \delta_c(f)^{\frac{1}{2}}+\delta_c(f).
$$
\end{theorem}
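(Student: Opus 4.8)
The plan is to rewrite the left-hand side as $2\pi$ times the gap between the second moments of a Gaussian-type probability measure and the reference measure $m$, and then combine \eqref{carlen} with Talagrand's transportation inequality. First I would record the underlying structure: $dm=2^{n/2}e^{-2\pi|x|^{2}}dx$ is the centered Gaussian probability measure with covariance $\frac{1}{4\pi}I_{n}$, so that $m_{2}(m)=\int|x|^{2}\,dm=\frac{n}{4\pi}$ and $2\pi\int|x|^{2}\,dm=\frac n2$. Since $\mathcal{F}$ is unitary on $L^{2}(dx)$ and $U$ is an isometry of $L^{2}(dx)$ onto $L^{2}(dm)$, the Fourier--Wiener transform $\cW=U\mathcal{F}U^{*}$ is unitary on $L^{2}(dm)$; writing out $U$ and $U^{*}=U^{-1}$ gives the explicit formula $\cW f=e^{\pi|\cdot|^{2}}\mathcal{F}\bigl(e^{-\pi|\cdot|^{2}}f\bigr)$. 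Hence, for $f$ normalized in $L^{2}(dm)$, the measure $\mu:=|\cW f|^{2}\,dm$ is a probability measure.

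The core computation is the identity
\[
2\pi\int|x|^{2}\,dm-2\pi\int|x|^{2}|f|^{2}\,dm+\frac{1}{2\pi}\int|\nabla f|^{2}\,dm=2\pi\bigl(m_{2}(\mu)-m_{2}(m)\bigr).
\]
To prove it, set $\phi=e^{-\pi|\cdot|^{2}}f$, so that $\cW f=e^{\pi|\cdot|^{2}}\widehat{\phi}$ and $|\cW f|^{2}\,dm=2^{n/2}|\widehat{\phi}|^{2}\,dx$. Plancherel (with $\widehat{\partial_{j}\phi}=2\pi i\xi_{j}\widehat{\phi}$) together with $\nabla\phi=e^{-\pi|\cdot|^{2}}(\nabla f-2\pi x f)$ gives $\int|x|^{2}|\cW f|^{2}\,dm=\frac{1}{4\pi^{2}}\int|\nabla f-2\pi x f|^{2}\,dm$. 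Expanding the square and integrating by parts against $dm$ --- using $\nabla(dm/dx)=-4\pi x\,(dm/dx)$, i.e.\ $\int x\cdot\nabla(|f|^{2})\,dm=-n+4\pi\int|x|^{2}|f|^{2}\,dm$ --- turns the right side into $\int|\nabla f|^{2}\,dm+2\pi n-4\pi^{2}\int|x|^{2}|f|^{2}\,dm$, and rearranging (using $2\pi m_{2}(m)=\frac n2$) yields the identity.

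Finally I would combine the identity with the two inputs. By \eqref{carlen}, $\Ent_{m}(\mu)=\int|\cW f|^{2}\log|\cW f|^{2}\,dm\le\delta_{c}(f)$. Applying the triangle inequality for $W_{2}$ to $\mu$, $m$, and $\delta_{0}$, and using $W_{2}(\cdot,\delta_{0})=m_{2}(\cdot)^{1/2}$, gives $m_{2}(\mu)^{1/2}\le m_{2}(m)^{1/2}+W_{2}(\mu,m)$, hence $m_{2}(\mu)-m_{2}(m)\le 2\,m_{2}(m)^{1/2}W_{2}(\mu,m)+W_{2}(\mu,m)^{2}$. Talagrand's inequality for the Gaussian $m$ gives $W_{2}^{2}(\mu,m)\le\frac{1}{2\pi}\Ent_{m}(\mu)$; inserting this, using $m_{2}(m)^{1/2}=\frac{\sqrt n}{2\sqrt\pi}$, multiplying by $2\pi$, bounding $\Ent_{m}(\mu)\le\delta_{c}(f)$, and invoking the monotonicity of $t\mapsto\sqrt{2n}\,\sqrt t+t$ together with $\sqrt{2n}\le 2\sqrt{\pi n}$, one obtains
\[
2\pi\bigl(m_{2}(\mu)-m_{2}(m)\bigr)\le\sqrt{2n}\,\delta_{c}(f)^{1/2}+\delta_{c}(f)\le 2\sqrt{\pi n}\,\delta_{c}(f)^{1/2}+\delta_{c}(f),
\]
which with the identity above is the claim (the case $m_{2}(\mu)<m_{2}(m)$ being trivial).

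I expect the main obstacle to be the identity in the second step: it amounts to computing precisely how $\cW$ intertwines the multiplication operator $|x|^{2}$ with the Dirichlet form, i.e.\ tracking all the constants in $\int|x|^{2}|\cW f|^{2}\,dm$, and justifying the Plancherel and integration-by-parts manipulations (routine once one assumes the three integrals in the statement are finite, the complementary case being vacuous). As an independent check, after the change of variables $x\mapsto x/\sqrt{4\pi}$ one may instead apply the scaling inequality $2\delta\ge(2\H+m_{2}(\gamma)-m_{2}(\cdot))^{2}$ for the standard Gaussian to $|f(\cdot/\sqrt{4\pi})|^{2}$; this reproduces the estimate (in fact with $1/\sqrt2$ in place of $2\sqrt{\pi n}$), but only the Talagrand route above genuinely uses \eqref{carlen}.
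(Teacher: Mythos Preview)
Your argument is correct. Both proofs compute the same Plancherel identity --- that the left-hand side equals $2\pi\bigl(m_{2}(\mu)-m_{2}(m)\bigr)$ with $\mu=|\cW f|^{2}\,dm$ --- and both finish by combining Carlen's bound $\Ent_{m}(\mu)\le\delta_{c}(f)$ with Talagrand's inequality for the Gaussian $m$. The difference is in how the second-moment gap is controlled. The paper works directly with the Brenier map $T=\nabla\phi$ from $dm$ to $\mu$: it uses the Monge--Amp\`ere equation to write $2\pi(m_{2}(\mu)-m_{2}(m))=\int\log\det D^{2}\phi\,dm+\Ent_{m}(\mu)$, then the concavity of $\log$ and an integration by parts to arrive at $2\sqrt{\pi n}\,W_{2}(\mu,m)+\Ent_{m}(\mu)$. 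You bypass all of this with the one-line triangle inequality $m_{2}(\mu)^{1/2}\le m_{2}(m)^{1/2}+W_{2}(\mu,m)$, obtaining the tighter intermediate bound $2\sqrt{\pi n}\,W_{2}(\mu,m)+2\pi W_{2}^{2}(\mu,m)$ (tighter because $2\pi W_{2}^{2}\le\Ent_{m}(\mu)$ is Talagrand again). Your route is more elementary --- it treats Talagrand as a black box rather than partially re-deriving it via Monge--Amp\`ere --- and it yields the sharper constant $\sqrt{2n}$ in place of $2\sqrt{\pi n}$, which you then discard to match the stated inequality. The paper's approach has the virtue of being self-contained on the transport side, but yours is shorter and loses nothing.
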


\begin{corollary} \label{c3}
Let $\{f_k\}$ be normalized and centered in $L^2(dm)$ and suppose $f_k\in \{f:\int |f|^4dm \le M\}$ ($M\ge 1$). Then if $\delta_c(f_k) \rightarrow 0$ as $k\rightarrow \infty$, $$\int |\nabla f_k|^2 dm \rightarrow 0$$ 
and in particular,
$$f_k \rightarrow 1$$ in $L^2(dm)$.  
\end{corollary}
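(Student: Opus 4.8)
The plan is to combine Theorem~\ref{aat} with a compactness argument in the Gaussian Sobolev space $W^{1,2}(dm)$, using the $L^4$ bound to pass to the limit in the entropy term and the centering hypothesis to pin the limit down to the constant function $1$. Put $\epsilon_k:=2\sqrt{\pi n}\,\delta_c(f_k)^{1/2}+\delta_c(f_k)$, so $\epsilon_k\to0$, and apply Theorem~\ref{aat} to $f_k$:
\begin{equation}\label{eq:aatk}
2\pi\int|x|^2\,dm-2\pi\int|x|^2|f_k|^2\,dm+\frac{1}{2\pi}\int|\nabla f_k|^2\,dm\le\epsilon_k.
\end{equation}
The first observation is that $\int|f_k|^4\,dm\le M$ bounds the weighted second moment uniformly: for any $R>0$, Cauchy--Schwarz gives $\int_{|x|>R}|x|^2|f_k|^2\,dm\le M^{1/2}\big(\int_{|x|>R}|x|^4\,dm\big)^{1/2}$, while $\int_{|x|\le R}|x|^2|f_k|^2\,dm\le R^2$; since $dm$ is a Gaussian probability measure with finite moments, choosing $R$ appropriately yields $\int|x|^2|f_k|^2\,dm\le C_M$ uniformly in $k$. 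Rearranging \eqref{eq:aatk} then bounds $\int|\nabla f_k|^2\,dm$ uniformly in $k$, and together with $\norm{f_k}_{L^2(dm)}=1$ this shows $\{f_k\}$ is bounded in $W^{1,2}(dm)$.

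Next, fix an arbitrary subsequence. Because $W^{1,2}(dm)\hookrightarrow L^2(dm)$ is compact (the Ornstein--Uhlenbeck operator associated with $dm$ has compact resolvent), we may pass to a further subsequence, still denoted $\{f_{k_j}\}$, with $f_{k_j}\to f^*$ in $L^2(dm)$ and $m$-a.e., and $\nabla f_{k_j}\rightharpoonup\nabla f^*$ weakly in $L^2(dm)$. The bound $\int|f_{k_j}|^4\,dm\le M$ makes the families $\{|f_{k_j}|^2\}$, $\{|x|\,|f_{k_j}|^2\}$, $\{|x|^2|f_{k_j}|^2\}$ and $\{|f_{k_j}|^2\log|f_{k_j}|^2\}$ uniformly integrable with respect to $m$ --- for the last one use $t^2\log t^2\le Ct^4$ for $|t|\ge1$ and $|t^2\log t^2|\le 1/e$ for $|t|\le1$ --- so that $\int|f_{k_j}|^2\log|f_{k_j}|^2\,dm\to\int|f^*|^2\log|f^*|^2\,dm$; combined with weak lower semicontinuity of the Dirichlet energy this gives $\delta_c(f^*)\le\liminf_j\delta_c(f_{k_j})=0$, hence $\delta_c(f^*)=0$ (note $\delta_c\ge0$ by \eqref{carlen} and the unitarity of $\mathcal{W}$ on $L^2(dm)$). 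By the characterization of equality in \eqref{carlen}, $f^*=f_a$ for some $a\in\R^n$, so $|f^*|^2\,dm$ is $dm$ translated by $a$; the uniform integrability of $\{|x|\,|f_{k_j}|^2\}$ lets us pass to the limit in the centering relation $\int x\,|f_{k_j}|^2\,dm=0$ to get $\int x\,|f^*|^2\,dm=a=0$, so $f^*=1$.

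Feeding $f^*=1$ back into \eqref{eq:aatk} closes the loop: since $|f_{k_j}|^2\to1$ in $L^1(dm)$ and $\{|x|^2|f_{k_j}|^2\}$ is uniformly integrable, $\int|x|^2|f_{k_j}|^2\,dm\to\int|x|^2\,dm$, whence
\[\frac{1}{2\pi}\int|\nabla f_{k_j}|^2\,dm\le\epsilon_{k_j}-2\pi\Big(\int|x|^2\,dm-\int|x|^2|f_{k_j}|^2\,dm\Big)\longrightarrow0,\]
that is, $\int|\nabla f_{k_j}|^2\,dm\to0$ and $f_{k_j}\to1$ in $L^2(dm)$. As every subsequence of $\{f_k\}$ has a further subsequence with these properties and the limit is the same each time, the full sequence satisfies $\int|\nabla f_k|^2\,dm\to0$ and $f_k\to1$ in $L^2(dm)$; alternatively, $f_k\to1$ follows from $\int|\nabla f_k|^2\,dm\to0$ via the Gaussian Poincar\'e inequality for $dm$ together with $\norm{f_k}_{L^2(dm)}=1$.

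I expect the crux to be the limit-identification step: one must pass to the limit in the entropy functional $f\mapsto\int|f|^2\log|f|^2\,dm$, which is exactly where the $L^4$ hypothesis enters (through uniform integrability), and one must use centering to exclude the translates $f_a$ with $a\ne0$, since it is precisely when $f^*=1$ that the two weighted-second-moment terms in \eqref{eq:aatk} cancel in the limit and drive the Dirichlet energy to $0$. The remaining technical points --- the compactness of $W^{1,2}(dm)\hookrightarrow L^2(dm)$ and the verification of the uniform-integrability claims from the single bound $\int|f_k|^4\,dm\le M$ --- are routine.
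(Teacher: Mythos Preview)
Your argument is correct and differs from the paper's in the limit-identification step. The paper's proof uses the $L^4$ bound only to extract a weakly convergent subsequence $f_k^2\rightharpoonup f^2$ in $L^2(dm)$ and then invokes Theorem~\ref{mainthm1} (the quantitative $W_1$--stability, which in turn rests on the Talagrand stability of Cordero-Erausquin) to conclude that the weak limit of the densities $|f_k|^2$ is $1$; only afterwards does Theorem~\ref{aat} force $\int|\nabla f_k|^2\,dm\to0$. You instead use Theorem~\ref{aat} twice: first, together with the $L^4$ bound, to obtain a uniform $W^{1,2}(dm)$ bound and hence strong $L^2(dm)$ compactness; second, after identifying the limit as $1$, to kill the Dirichlet energy. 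Your identification step bypasses Theorem~\ref{mainthm1} entirely, relying only on lower semicontinuity of $\delta_c$ (with the entropy term handled by uniform integrability from the $L^4$ bound) and Carlen's characterization of equality in \eqref{carlen}. This is a genuinely more self-contained route, since it does not call on the Talagrand-stability machinery; the price is that you need the compact embedding $W^{1,2}(dm)\hookrightarrow L^2(dm)$ and the Vitali-type convergence for the entropy, both standard. One small point: Carlen's equality characterization is stated for $f\ge 0$, so you are implicitly using that the $f_k$ (hence the a.e.\ limit $f^*$) are nonnegative --- this is tacit throughout the paper but worth making explicit when you invoke the equality case.
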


\begin{corollary} \label{c4}
Let $\{f_k\}$ be normalized and centered in $L^2(dm)$ and suppose $f_k\in \{f:\int |x|^2|f|^2dm \le M\}$ ($M\ge \frac{n}{4\pi}$). Then if $\delta_c(f_k) \rightarrow 0$ as $k\rightarrow \infty$, $f_k \rightarrow 1$ in $L^2(dm)$.  
\end{corollary}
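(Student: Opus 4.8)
The plan is to combine the quantitative bound of Theorem~\ref{aat} with a compactness argument in $W^{1,2}(dm)$ and the rigidity in \eqref{carlen}. Recall first that $dm$ is a centered Gaussian probability measure with $\int |x|^2\,dm=\frac{n}{4\pi}$, so Theorem~\ref{aat} applied to $f_k$ reads
\[
\frac{n}{2}-2\pi\int |x|^2|f_k|^2\,dm+\frac{1}{2\pi}\int|\nabla f_k|^2\,dm\le 2\sqrt{\pi n}\,\delta_c(f_k)^{1/2}+\delta_c(f_k).
\]
Using the hypothesis $\int|x|^2|f_k|^2\,dm\le M$ and $\delta_c(f_k)\to0$, this yields $\int|\nabla f_k|^2\,dm\le 4\pi^2 M-\pi n+o(1)$, a uniform bound; since $\|f_k\|_{L^2(dm)}=1$, the sequence $\{f_k\}$ is bounded in $W^{1,2}(dm)$.

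Next I would use the compactness of the embedding $W^{1,2}(dm)\hookrightarrow L^2(dm)$, valid because $dm$ is Gaussian (the associated Ornstein--Uhlenbeck operator has compact resolvent; equivalently, the Hermite expansion controls the high-frequency tail by the Dirichlet energy). Thus along a subsequence $f_{k_j}\to f_*$ strongly in $L^2(dm)$ with $\|f_*\|_{L^2(dm)}=1$, and by Fatou $\int|x|^2|f_*|^2\,dm\le M$. The moment bound also makes $\{|x|\,|f_k|^2\}$ uniformly integrable for $dm$, so the centering $\int x\,|f_k|^2\,dm=0$ passes to the limit and $f_*$ is centered.

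Then I would pass to the limit in \eqref{carlen}. Since $\mathcal{W}$ is unitary on $L^2(dm)$, $\mathcal{W}f_{k_j}\to\mathcal{W}f_*$ in $L^2(dm)$, hence $|\mathcal{W}f_{k_j}|^2\,dm\to|\mathcal{W}f_*|^2\,dm$ in total variation. By \eqref{carlen} and the Beckner--Hirschman entropic uncertainty principle, $0\le\int|\mathcal{W}f_{k_j}|^2\log|\mathcal{W}f_{k_j}|^2\,dm\le\delta_c(f_{k_j})\to0$, so lower semicontinuity of the relative entropy forces $\int|\mathcal{W}f_*|^2\log|\mathcal{W}f_*|^2\,dm=0$. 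Carlen's characterization of the equality cases then gives $f_*=e^{2\pi(a\cdot x-|a|^2/2)}$ for some $a\in\R^n$, and centering forces $a=0$, i.e. $f_*\equiv1$. Since the limit does not depend on the subsequence, the whole sequence satisfies $f_k\to1$ in $L^2(dm)$.

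The main obstacle is the limit passage itself: when $M>\frac{n}{4\pi}$, Theorem~\ref{aat} only gives boundedness, not smallness, of $\int|\nabla f_k|^2\,dm$, so one genuinely needs the compact Gaussian Sobolev embedding together with the lower semicontinuity of the functional $f\mapsto\int|\mathcal{W}f|^2\log|\mathcal{W}f|^2\,dm$ rather than a direct modulus of continuity; a minor point is to check that the equality-case characterization of \eqref{carlen} applies to the nonnegative limiting density $f_*$.
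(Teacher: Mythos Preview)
Your argument is correct and shares its first two steps with the paper: use Theorem~\ref{aat} together with the second-moment hypothesis to obtain a uniform bound on $\int|\nabla f_k|^2\,dm$, and then invoke the compact embedding $W^{1,2}(dm)\hookrightarrow L^2(dm)$ to extract a strongly convergent subsequence $f_{k_j}\to f_*$ in $L^2(dm)$.

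The divergence is in the identification step. The paper appeals to Theorem~\ref{mainthm1}: after the change of variables $|f_k|^2\,dm\leftrightarrow g_k\,d\gamma$ one has $\delta_c(f_k)=\delta(g_k)\to0$, so $W_1(g_k\,d\gamma,d\gamma)\to0$, and the weak limit of $|f_k|^2\,dm$ must be $dm$, forcing $f_*=1$. You instead stay in the Fourier--Wiener picture: since $\mathcal W$ is unitary, $|\mathcal W f_{k_j}|^2\,dm\to|\mathcal W f_*|^2\,dm$ in total variation, and because $0\le\int|\mathcal W f_{k_j}|^2\log|\mathcal W f_{k_j}|^2\,dm\le\delta_c(f_{k_j})\to0$, lower semicontinuity of relative entropy gives $\int|\mathcal W f_*|^2\log|\mathcal W f_*|^2\,dm=0$; Carlen's equality characterization plus centering then yields $f_*=1$. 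Both routes are sound. The paper's is essentially a one-line corollary of the $W_1$--stability already established via optimal transport and the quantitative Talagrand inequality; yours is more self-contained, avoiding the Talagrand/HWI machinery entirely at the cost of checking lower semicontinuity and that the equality case of \eqref{carlen} applies to $f_*$. The nonnegativity of $f_*$ (needed for Carlen's rigidity via Cram\'er's theorem) is implicit in both arguments and follows once one takes the $f_k$ nonnegative, which is the natural normalization here.
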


The rescaled version of Corollary \ref{c4} can also be shown with a proof based solely on optimal transport methods.

\begin{theorem}\label{mainthm2}
	Let $\{f_{k}d\gamma\}$ be a sequence of centered probability measures in $\cP_{2}^{M}(\R^{n})$ ($M \ge n$). If $\delta(f_k)\to 0$ as $k\to\infty$, then $f_{k}\to 1$ in $L^{1}(\R^{n},d\gamma)$.  
\end{theorem}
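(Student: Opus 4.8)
The plan is to upgrade the $W_1$-stability of Theorem \ref{mainthm1} to strong $L^1(d\gamma)$-convergence by a compactness argument in the Gaussian Sobolev space $W^{1,2}(\R^n,d\gamma)$, identifying the limit through uniqueness of weak limits and then concluding via the elementary subsequence principle.

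First I would establish a uniform bound on the Fisher information along the sequence. Since $\delta(f_k)\to0$ we have $\delta(f_k)\le\delta_0$ for some $\delta_0$, and the scaling inequality $2\delta(f)\ge\big(2\H(f)+(m_2(\gamma)-m_2(\nu))\big)^2$ recorded above, applied with $d\nu=f_kd\gamma$ and $m_2(\gamma)=n$, gives $2\H(f_k)\le\sqrt{2\delta(f_k)}+m_2(f_kd\gamma)-n\le\sqrt{2\delta_0}+M-n$. Together with $\H(f_k)\ge0$ (Jensen), this bounds $\H(f_k)$ uniformly, so $\I(f_k)=2\delta(f_k)+2\H(f_k)$ is uniformly bounded; since $\I(f_k)=4\int|\nabla\sqrt{f_k}|^2\,d\gamma$ and $\int f_k\,d\gamma=1$, the sequence $(\sqrt{f_k})$ is bounded in $W^{1,2}(\R^n,d\gamma)$. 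This is where the assumption $f_kd\gamma\in\cP_2^M(\R^n)$ enters essentially, and I expect it to be the main obstacle: without a second moment bound, $\H(f_k)$ and $\I(f_k)$ need not remain bounded, so the compactness input below would be unavailable — the scaling inequality is precisely what converts the moment bound into the required $W^{1,2}(d\gamma)$ estimate.

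Next, given an arbitrary subsequence of $(f_k)$, I would invoke the compact embedding $W^{1,2}(\R^n,d\gamma)\hookrightarrow L^2(\R^n,d\gamma)$ (which follows from the discreteness of the Ornstein--Uhlenbeck spectrum, equivalently from the Gaussian Poincar\'e inequality) to extract a further subsequence $(f_{k_j})$ with $\sqrt{f_{k_j}}\to h$ in $L^2(d\gamma)$ for some $h\ge0$. Then $\|f_{k_j}-h^2\|_{L^1(d\gamma)}\le\|\sqrt{f_{k_j}}-h\|_{L^2(d\gamma)}\|\sqrt{f_{k_j}}+h\|_{L^2(d\gamma)}\to0$, so $f_{k_j}\to f_\infty:=h^2$ in $L^1(d\gamma)$ with $\int f_\infty\,d\gamma=1$; in particular $f_{k_j}d\gamma\to f_\infty d\gamma$ weakly. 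On the other hand $\delta(f_{k_j})\to0$, and since each $f_kd\gamma$ is centered and lies in $\cP_2^M(\R^n)$ with $M\ge n$, Theorem \ref{mainthm1} gives $W_1(f_{k_j}d\gamma,d\gamma)\to0$ and hence $f_{k_j}d\gamma\to d\gamma$ weakly. Uniqueness of weak limits then forces $f_\infty d\gamma=d\gamma$, i.e. $f_\infty=1$ $\gamma$-almost everywhere.

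Finally, since every subsequence of $(f_k)$ admits a further subsequence converging to $1$ in $L^1(d\gamma)$, the full sequence converges, which gives $f_k\to1$ in $L^1(\R^n,d\gamma)$. One could alternatively try to keep the argument quantitative by bounding $\|f-1\|_{L^1(d\gamma)}$ in terms of moduli of continuity of $W_1(fd\gamma,d\gamma)$ and of $\delta(f)$, as in the proof of Theorem \ref{mainthm3}, but for general $n$ the compactness route is cleaner and yields the qualitative statement directly.
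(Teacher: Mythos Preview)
Your proof is correct and follows the same overall strategy as the paper's: bound the Fisher information uniformly, extract a convergent subsequence by Sobolev compactness, and identify the limit as $1$ via Theorem~\ref{mainthm1} and uniqueness of weak limits, then conclude by the subsequence principle.

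There are two differences worth noting. First, for the Fisher bound the paper uses HWI plus Young's inequality, whereas you use the scaling inequality $2\delta(f)\ge(2\H(f)+m_2(\gamma)-m_2(\nu))^2$; both are available in the paper and give the same conclusion (your intermediate line $2\H(f_k)\le\sqrt{2\delta(f_k)}+m_2(f_kd\gamma)-n$ tacitly assumes the quantity being squared is nonnegative, but in the complementary case one gets $2\H(f_k)<m_2(f_kd\gamma)-n\le M-n$ directly, so the final bound stands). Second, and more substantively, the paper passes to Lebesgue space, writes $\sqrt{h_j}=\sqrt{f_j\gamma}$ as a bounded sequence in $W^{1,2}(\R^n,dx)$, applies Rellich--Kondrachov on bounded balls, diagonalizes, and uses Prokhorov tightness to control the tails. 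You instead invoke the global compact embedding $W^{1,2}(\R^n,d\gamma)\hookrightarrow L^2(\R^n,d\gamma)$ (a consequence of the discrete Ornstein--Uhlenbeck spectrum), which eliminates the localization, diagonalization, and tightness steps in one stroke. This is a genuine streamlining of the argument; the paper's route is more hands-on but yields the same conclusion.
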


Another approach to proving stability estimates for LSI is to investigate quantitative versions of Cram\'er's theorem \cite{Bobkov2016, MR2918082},  which states that if the convolution of two non-negative integrable functions is Gaussian, then the functions must be Gaussian\footnote{This fact was already utilized in Carlen's proof of the equality cases.} and combine them with a convolution type deficit bound \cite{Feo2017}. It turns out that this technique yields several results which we include in the appendix.    

\begin{table}[t]
\small 
\begin{center}
\begin{tabular}{|c|c|c|c|}
\hline
{Metric} & {Space} & {Stability} & {Reference} \\ \hline\hline

\multirow{2}{*}{$W_{1}$} 
& {$\cP_{2}^{M}$ ($M\geq n$)}	& {stable} & {\parbox{3cm}{Theorem \ref{mainthm1}; cf. \cite[Corollary 6]{Fathi2016}}} \\\cline{2-4}
& {$\cP_{2}$} & {unstable} & {\cite[Theorem 1.2]{Kim2018}} \\\hline

\multirow{4}{*}{$W_{2}$} 
& $\cP_{2}^{n}$ & {stable} & {\cite[Corollary 1.2]{Bobkov2014}} \\\cline{2-4}
& {(2,2)-Poincar\'e} & {stable} & {\cite[Corollary 3]{Fathi2016} }	 \\\cline{2-4}
& {$\{f:(-1+\varep)\leq D^{2}(\log\frac{1}{f})\leq M\}$, $\varep>0$} & {stable} & {\cite[Theorem 1.1]{Indrei2014} }	 \\\cline{2-4}
& {$\cP_{2}^{M}$ ($M>n$)} & {unstable} & {\cite[Theorem 1.1]{Kim2018}} \\\hline

\multirow{6}{*}{$L^{1}$} 
& $\cP_{2}^{M}$  ($M\geq n=1$)& {stable} & {Theorem \ref{mainthm3}}	 \\\cline{2-4}
& $\cP_{2}^{n}$ & {stable} & {\cite[Corollary 1.2]{Bobkov2014}} \\\cline{2-4} 
& $\cP_{2}^{M}$  ($M\geq n$)& {weakly stable} & {\parbox{2.5cm}{Theorem \ref{mainthm2}; Corollary \ref{c4}}} \\\cline{2-4}
& {(2,2)-Poincar\'e} & {stable} & {\cite[Corollary 4]{Fathi2016} }	 \\\cline{2-4}
& $\{f:\int fe^{-(\frac{1}{2}-\frac{\epsilon}{4\pi}) |x|^2}dx \le C\}$, $\epsilon>0$ & {weakly stable} & {\parbox{2.5cm}{Theorem \ref{c5}; Corollary \ref{corc5}}} \\\cline{2-4}
& $\{f: \alpha\le f \le g\}$, $\alpha \in (0,1]$, $g \in L^1(d\gamma)$ & {weakly stable} & {Corollary \ref{c2}}  \\\hline
\multirow{3}{*}{$L^{p}$ ($p>1$)} 
& {$\cP_{2}^{M}$ ($M>n$)} & {unstable}	 & {\cite[Theorem 1.1]{Kim2018}}	 \\\cline{2-4}
& {$\cP_{2}^{M}$ ($M\ge n=1$)}, $\{\int|f|^{2p-1}d\gamma \le N\}$ & {stable}	 & {Corollary \ref{maincor3}}	 \\\cline{2-4}
& {$\cP_{2}^{M}$ ($M\ge n$)}, $\{\int|f|^{2p-1}d\gamma \le N\}$ & {weakly stable}	 & {Corollary \ref{corc6}}	 \\\hline
\end{tabular}	
\end{center}
\caption{A summary of stability/instability results for LSI.}
\label{table}
\end{table}

\section{Preliminaries}
\subsection{The Wasserstein distances}
For $p\geq 1$ and a probability measure $\mu$, the $p$-th moment of $\mu$ is given by $m_{p}(\mu)=\int_{\R^{n}}|x|^{p}d\mu$. The space of probability measures with finite $p$-th moment is denoted by $\cP_{p}(\R^{n})$. The \emph{Wasserstein distance of order $p$} between two probability measures $\mu,\nu\in \cP_{p}(\R^{n})$ is 
\begin{eqnarray*}
	W_{p}(\mu,\nu)=\inf_{\pi} \Big(\iint_{\R^{n}\times \R^{n}} |x-y|^{p}d\pi(x,y)\Big)^{\frac{1}{p}}
\end{eqnarray*}
where the infimum is taken over all probability measures $\pi$ on $\R^{n}\times \R^{n}$ with marginals $\mu$ and $\nu$. In general, one can define the optimal transportation cost with a cost function $c(x,y)$ on $\R^{n}\times \R^{n}$ by
\begin{eqnarray*}
	\cT_{c}(\mu,\nu)=\inf_{\pi} \Big(\iint_{\R^{n}\times \R^{n}} c(x,y)d\pi(x,y)\Big).
\end{eqnarray*}
In particular, $W_{1}$ is called the Kantorovich--Rubinstein distance and $W_{2}$ is called the quadratic Wasserstein distance. 

We recall some properties. For $p\geq 1$, $W_{p}$ defines a metric on $\cP_{p}(\R^{n})$. If $p_{1}<p_{2}$, then $\cP_{p_{1}}(\R^{n})$ contains $\cP_{p_{2}}(\R^{n})$ and the $W_{p_{1}}$ distance is weaker than $W_{p_{2}}$. Indeed it follows from Jensen's inequality that $W_{p_{1}}(\mu,\nu)\leq W_{p_{2}}(\mu,\nu)$ for probability measures $\mu,\nu$ in $\cP_{p_{2}}(\R^{n})$. The Wasserstein distance of order $p$ is stronger than weak convergence: let $\nu_{k}$ be a sequence of probability measures in $\cP_{p}(\R^{n})$, then $\nu_{k}$ converges to $\mu$ in $W_{p}$ if and only if $\nu_{k}\rightharpoonup \mu$ weakly and $m_{p}(\nu_{k})\to m_{p}(\mu)$ as $k \rightarrow \infty$.

Let $\mu$ and $\nu$ be probability measures with finite second moments. Then there exists a map $T:\R^{n}\to \R^{n}$ such that $\nu(A)=\mu(T^{-1}(A))$ for all Borel sets $A$ in $\R^{n}$ and
\begin{eqnarray*}
	W_{2}^{2}(\mu,\nu)                                                        
	&=& \int_{\R^{n}}|T(x)-x|^{2}d\mu.
\end{eqnarray*}
It is well-known that the map $T$ is uniquely determined $\mu$-almost everywhere and is a gradient of a convex function called the \emph{Brenier map}.

We say a function $\varphi$ is 1-Lipschitz if $|\varphi(x)-\varphi(y)|\leq |x-y|$ for all $x,y\in\R^{n}$. The Kantorovich--Rubinstein distance $W_{1}$ has a dual form
\begin{eqnarray*}
	W_{1}(\mu,\nu)=\sup\Big\{\int_{\R^{n}}\varphi(d\mu-d\nu):\varphi\in L^{1}(d|\mu-\nu|), \varphi\text{ is 1-Lipschitz.}\Big\}.
\end{eqnarray*}
For further information, we refer the reader to \cite{Villani2003}.

\subsection{The total variation distance}
Let $\mu$ and $\nu$ be probability measures. The total variation distance between $\mu$ and $\nu$ is defined by
\begin{eqnarray*}
	\dtv{\mu,\nu}=\sup_{A}|\mu(A)-\nu(A)|
\end{eqnarray*}
where the supremum is taken over all Borel sets in $\R^{n}$ and yields a stronger topology than weak convergence. That is, if $\dtv{\mu,\nu_{k}}\to 0$ as $k\to\infty$, then $\nu_{k}$ converges weakly to $\mu$ (however, the converse does not hold). The total variation distance can be thought of as the optimal transportation cost with cost function $c(x,y)=1_{x\neq y}$ and has a dual form 
\begin{eqnarray*}
	\dtv{\mu,\nu}=\sup_{ 0\leq |\varphi|\leq 1}\int_{\R^{n}}\varphi(d\mu-d\nu).
\end{eqnarray*}
Suppose that $\nu$ is absolutely continuous with respect to $\mu$ and has $f$ as its Radon--Nikodym derivative, that is, $d\nu=fd\mu$. Then the total variation distance $\dtv{\mu,\nu}$ can be written in terms of the $L^{1}$--norm
\begin{eqnarray*}
	\dtv{\mu,\nu}=\frac{1}{2}\norm{f-1}_{L^{1}(d\mu)}.
\end{eqnarray*}
It is well-known that the total variation distance is comparable to the Hellinger distance
\begin{eqnarray}\label{ineq:L1_L2}
	\norm{\sqrt{f}-1}_{L^{2}(d\mu)}^{2}
	\leq \norm{f-1}_{L^{1}(d\mu)}
	\leq 2\norm{\sqrt{f}-1}_{L^{2}(d\mu)}.
\end{eqnarray}

\subsection{Some inequalities}
Let $\gamma$ be the Gaussian measure and $d\nu=fd\gamma$. The entropy functional $\nu \mapsto \H(f)$ is utilized to measure how far $\nu$ is from the Gaussian measure $\gamma$. It is stronger than the total variation distance but weaker than the $L^{p}$-norm for $p>1$ 
\begin{eqnarray*}
	2\norm{f-1}_{L^{1}(d\gamma)}^{2}
	\leq \H(f)
	\leq \frac{2}{p-1}\norm{f-1}_{L^{p}(d\gamma)}^{p}+2\norm{f-1}_{L^{p}(d\gamma)}.
\end{eqnarray*}
The first inequality is called Pinsker's inequality. Talagrand \cite{Talagrand1996} introduced the inequality
\begin{eqnarray}\label{ineq:Talagrand}
	W_{2}^{2}(d\nu,d\gamma)\leq 2\H(f),
\end{eqnarray}
which implies that the entropy is stronger than the quadratic Wasserstein distance. Otto and Villani \cite{Otto2000} proved that LSI implies the Talagrand inequality. Let $\delta_{\Tal}(f):=2\H(f)-W_{2}^{2}(d\gamma,d\nu)$. If $\nu$ in $\cP_{2}$ is centered, then Cordero-Erausquin \cite{Cordero-Erausquin2017} showed 
	\begin{eqnarray*}
		\delta_{\Tal}(f) 
		\geq C \min\Paren{W_{1}^{2}(d\gamma,d\nu),W_{1}(d\gamma,d\nu)}
	\end{eqnarray*}	
	where $C>0$, and a comparable stability result was also shown in \cite{Fathi2016}. The quantitative Talagrand inequality is one of the main ingredients for proving Theorem \ref{mainthm1}.
Otto and Villani proved the HWI inequality which is an ``interpolation'' inequality between the entropy, Wasserstein distance, and Fisher information 
\begin{eqnarray*}\label{ineq:HWI}
	\H(f)\leq W_{2}(d\nu,d\gamma)\sqrt{\I(f)}-\frac{1}{2}W_{2}^{2}(d\nu,d\gamma).
\end{eqnarray*}

\section{Proofs of The Main Results}

\begin{proof}[Proof of Theorem \ref{mainthm3}]
Let $T$ be the Brenier map between $fd\gamma$ and $d\gamma$. Then it follows that 
$$\int |T(x)-x+\nabla \log f|^2 fd\gamma \le 2 \delta(f).$$ Next by Poincare, 
\begin{align*}
\int |f-1| d\gamma &\le \int |\nabla f| d\gamma=\int |\nabla \log f| fd\gamma\\
&\le \int |\nabla \log f-T(x)+x| fd\gamma + \int |T(x)-x| fd\gamma\\
&\le \big(\int |\nabla \log f-T(x)+x|^2 fd\gamma \big)^{\frac{1}{2}}+\int |T(x)-x| fd\gamma\\
&\le \sqrt{2}\delta^{\frac{1}{2}}(f)+\int |T(x)-x| fd\gamma.  
\end{align*}
Therefore,  
$$\int |f-1| d\gamma-\sqrt{2}\delta^{\frac{1}{2}}(f) \le \int |T(x)-x| fd\gamma.$$ 
Since $T$ pushes $fd\gamma$ onto $d\gamma$, the previous inequality implies
$$\int |f-1| d\gamma-\sqrt{2}\delta^{\frac{1}{2}}(f) \le W_1(fd\gamma, d\gamma)$$ 
and Theorem \ref{mainthm1} yields
$$\int |f-1| d\gamma\le \sqrt{2}\delta^{\frac{1}{2}}(f) + C\max(\delta(f), \delta^{\frac{1}{4}}(f)).$$
\end{proof}

\begin{proof}[Proof of Corollary \ref{maincor3}]
\begin{align*}
\int |f-1|^p d\gamma&=\int |f-1|^{p-\frac{1}{2}}|f-1|^\frac{1}{2}d\gamma\\
&\le (2^{2p-2}(N+1))^\frac{1}{2}||f-1||_{L^1(d\gamma)}^\frac{1}{2}\\
&\le C_M(2^{2p-2}(N+1))^\frac{1}{2} \delta^{\frac{1}{8}}(f).
\end{align*}

\end{proof}

\begin{proof}[Proof of Corollary \ref{cor3}] 
For fixed $x'=(x_1,\ldots,x_{i-1},x_{i+1},x_n)$, let 
$$
g_{x'}(x_i)=f(x_1,\ldots,x_{i-1},x_i,x_{i+1},\ldots,x_n).
$$
Theorem \ref{mainthm3} implies 
\begin{align*}
\int g_{x'}(x_i) \log g_{x'}(x_i) d\gamma(x_i)+c\Big(\int |g_{x'}(x_i)-1| d\gamma(x_i) \Big)^4 & \le
\frac{1}{2} \int \frac{(\partial_{x_i} g_{x'}(x_i))^2}{g_{x'}(x_i)}d\gamma(x_i).
\end{align*}
In particular
\begin{align*}
\int \int g_{x'}(x_i) \log g_{x'}(x_i) d\gamma(x_i)d\gamma(x')+\int \Big(\int |g_{x'}(x_i)-1| &d\gamma(x_i) \Big)^4d\gamma(x')\\ 
&\le \frac{1}{2} \int \int \frac{(\partial_{x_i} g_{x'}(x_i))^2}{g_{x'}(x_i)}d\gamma(x_i)d\gamma(x')\\
&\le \frac{1}{2} \int \int \frac{|\nabla f|^2}{f}d\gamma(x_i)d\gamma(x').
\end{align*}
Therefore, 
$$
\int \Big(\int |g_{x'}(x_i)-1| d\gamma(x_i) \Big)^4d\gamma(x') \le c\delta(f) 
$$
and this implies 
$$
\int |f-1| d\gamma \le c\delta^{\frac{1}{4}}(f). 
$$
\end{proof}

\begin{proof}[Proof of Corollary \ref{cor4}]
By applying Theorem \ref{mainthm3} to $f_i$, it follows that 
$$ 
\sum_{i=1}^n \Big(\int |f_i(x_i)-1| d\gamma(x_i) \Big)^4 \le c \sum_{i=1}^n \delta(f_i).
$$
Since 
$$
|\nabla f_1f_2\cdots f_n|^2 =\sum_{i=1}^n (\partial_{x_i} f_i(x_i))^2\Pi_{j \neq i}(f_j(x_j))^2,
$$

$$
\int \frac{|\nabla f_1f_2\cdots f_n|^2}{f}d\gamma=\sum_{i=1}^n \int \frac{(\partial_{x_i} f_i)^2}{f_i}d\gamma(x_i).
$$
Moreover, 
$$
\int f\log f d\gamma=\sum_{i=1}^n \int f_i(x_i)\log f_i(x_i)d\gamma(x_i)
$$
and 
$$
\delta(f)=\sum_{i=1}^n \delta(f_i);
$$
thus, 
$$ 
\sum_{i=1}^n \int |f_i(x_i)-1| d\gamma(x_i) \le c \delta^{\frac{1}{4}}(f)
$$
and since
$$
\int |f-1|d\gamma \le \int |f_1-1|d\gamma+\int |f_2-1|d\gamma+\cdots+\int |f_n-1|d\gamma,
$$
the result follows.
\end{proof}

\begin{proof}[Proof of Theorem \ref{mainthm1}]

The proof is based on the stability of Talagrand inequality in \cite{Cordero-Erausquin2017, Fathi2016}. We denote by $\delta_{\Tal}(f):=2\H(f)-W_{2}^{2}(fd\gamma,d\gamma)$. D. Cordero-Erausquin shows that if $fd\gamma\in\cP_{2}$ is a centered probability measure, then
	\begin{eqnarray}\label{Talstab_Cordero}
		\delta_{\Tal}(f)
		\geq C_{\CE}\min\Paren{W_{1}^{2}(fd\gamma,d\gamma),W_{1}(fd\gamma,d\gamma)}
	\end{eqnarray}	
	where $C_{\CE}$ is a universal constant.
	First, we use the HWI inequality and Young's inequality to obtain
	\begin{eqnarray*}
		\H(f)
		&\leq& W_{2}(fd\gamma,d\gamma)\sqrt{\I(f)}-\frac{1}{2}W_{2}^{2}(fd\gamma,d\gamma) \\
		&\leq& \frac{1}{2t}\I(f)+\frac{t-1}{2}W_{2}^{2}(fd\gamma,d\gamma)
	\end{eqnarray*}
	for any $t>1$. 
	Suppose that a map $T:\R^{n}\to\R^{n}$ pushes forward $d\gamma$ to $fd\gamma$, then
	\begin{eqnarray*}
		W_{2}^{2}(fd\gamma,d\gamma)
		&\leq& \int_{\R^{n}}|T(x)-x|^{2}d\gamma \\
		&\leq& 2\Big(\int_{\R^{n}}|x|^{2}fd\gamma+\int_{\R^{n}}|x|^{2}d\gamma\Big) \\
		&\leq& 2(n+M).
	\end{eqnarray*}
	Since $\H(f)=\frac{1}{2}\I(f)-\delta(f)$ and $\H(f)\leq\frac{1}{2}\I(f)$, we have
	\begin{eqnarray}\label{thm1_eqn1}
		\Big(1-\frac{1}{t}\Big)\H(f)
		\leq (t-1)(n+M)+\delta(f).
	\end{eqnarray}
	By the HWI inequality \eqref{ineq:HWI}, we get 
	\begin{eqnarray*}
		\H(f)
		\leq -\frac{1}{2}(\sqrt{\I(f)}-W_{2}(fd\gamma,d\gamma))^{2}+\frac{1}{2}\I(f).
	\end{eqnarray*}
	Since $\sqrt{\I(f)}\geq \sqrt{2\H(f)}\geq W_{2}(fd\gamma,d\gamma)$, we have
	\begin{eqnarray}\label{thm1_eqn2}
		\delta(f)
		&\geq& \frac{1}{2}(\sqrt{2\H(f)}-W_{2}(fd\gamma,d\gamma))^{2} \nonumber\\
		&\geq& \frac{(2\H(f)-W_{2}^{2}(fd\gamma,d\gamma))^{2}}{2(\sqrt{2\H(f)}+W_{2}(fd\gamma,d\gamma))^{2}}\nonumber\\
		&\geq& \frac{\delta_{\Tal}^{2}(f)}{16\H(f)}.
	\end{eqnarray}
	In the last inequality, we have used Talagrand inequality. 
	Combining \eqref{thm1_eqn1} and \eqref{thm1_eqn2}, we obtain
	\begin{eqnarray*}
		\delta_{\Tal}^{2}(f)
		\leq \Big(\frac{16t}{t-1}\Big)\delta(f)(\delta(f)+(t-1)(n+M)).
	\end{eqnarray*}
	We solve this for $\delta(\nu)$ to see
	\begin{eqnarray*}
		\delta(f)
		\geq \frac{1}{2}(t-1)(n+M)\cF(K(t)\delta_{\Tal}(f))
	\end{eqnarray*}
	where $\cF(x)=\sqrt{x^{2}+1}-1$ and 
	\begin{eqnarray*}
		K(t):=\frac{1}{2\sqrt{t(t-1)}(n+M)}.
	\end{eqnarray*}
	Note that for $t>1$, the range of $K(t)$ is $(0,\infty)$. If we define $\cG(x):=\min\{x,x^{2}\}$, then one can easily see that $\cF(x)\geq (\sqrt{2}-1)\cG(x)$. It follows from \eqref{Talstab_Cordero} that
	\begin{eqnarray*}
		\delta(f)
		\geq \frac{\sqrt{2}-1}{2}(t-1)(n+M)\cG(C_{\CE}K(t)\cG(W_{1}(fd\gamma,d\gamma))).
	\end{eqnarray*}
	Since we have $C_{\CE}K(t)=1$ for
	\begin{eqnarray*}
		t=\frac{1}{2(n+M)}(\sqrt{(n+M)^{2}+C_{\CE}^{2}}+(n+M))>1,
	\end{eqnarray*}
	the proof is complete.
	\end{proof}

\begin{proof}[Proof of Theorem \ref{talpha}]
Let $T=\nabla \Phi=(T^1,T^2,\ldots,T^n)$ be the Brenier map from $fd\gamma$ to $d\gamma$ and $\{\lambda_i\}$ the eigenvalues of $DT-Id$. Then from the proof of LSI via optimal transport it follows that 

$$\int |T(x)-x+\nabla \log f|^2 fd\gamma \le 2 \delta(f);$$

$$ 
\int \sum_{i=1}^n (\lambda_i-\log(1+\lambda_i)) fd\gamma \le \delta(f).
$$

Without loss, $\delta(f) <<1$ (this follows from the argument below), and $C$ may change from line to line with dependence only on universal constants, $\alpha$, and $n$,   

\begin{align*}
\int \sum_{i=1}^n (\lambda_i-\log(1+\lambda_i)) fd\gamma &\ge C\sum_{i=1}^n\Big(\int_{\{|\lambda_i| \le c \}}|\lambda_i|^2 d\gamma+\int_{\{|\lambda_i| > c \}}|\lambda_i| d\gamma \Big)\\
&\ge C\sum_{i=1}^n\Big( \Big(\int_{\{|\lambda_i| \le c \}}|\lambda_i| d\gamma\Big)^2+\int_{\{|\lambda_i| > c \}}|\lambda_i| d\gamma \Big)\\
&\ge C\sum_{i=1}^n\Big( \Big(\int_{\{|\lambda_i| \le c \}}|\lambda_i| d\gamma\Big)^2+\Big (\int_{\{|\lambda_i| > c \}}|\lambda_i| d\gamma \Big)^2 \Big)\\
&\ge C\sum_{i=1}^n\Big(\int |\lambda_i| d\gamma \Big)^2\\
&\ge C\Big(\sum_{i=1}^n \int |\lambda_i| d\gamma \Big)^2\\
&\ge C\Big(\sum_{i,j} \int |\partial_jT^i-\delta_{ij}| d\gamma \Big)^2\\
&\ge C\Big(\sum_{i} \int |\nabla (T^i-x_i)| d\gamma \Big)^2\\
&\ge C\Big(\sum_{i} \int |T^i-x_i-a_i| d\gamma \Big)^2
\end{align*}
where $a=(a_1,\ldots,a_n)$ and 
$$
a=\int Td\gamma.
$$
Note that 
$$
|a| \le \int |T| d\gamma \le \frac{1}{\alpha} \int |x|d\gamma
$$
and
\begin{align*}
\int |\nabla \log f+a| d\gamma &\le \int |\nabla \log f+(T-x)| d\gamma+ \int |T-x-a| d\gamma\\
&\le \frac{1}{\sqrt{\alpha}}\Big(\int |\nabla \log f+(T-x)|^2 fd\gamma \Big)^{\frac{1}{2}}+\int |T-x-a| d\gamma\\
&\le C(\alpha,n)\delta^{\frac{1}{2}}(f),
\end{align*}
where $C(\alpha,n)=\sqrt{\frac{2}{\alpha}}+\frac{C(n)}{\alpha}$. Now let 
$$
b=\int \log f d\gamma \in (\log \alpha, 0);
$$
since 
$$
\int |\nabla \log f+a| d\gamma \ge c\int |\log f+a \cdot x-b| d\gamma,
$$
the result follows.
\end{proof}

\begin{proof}[Proof of Corollary \ref{c1}]
Theorem \ref{talpha} implies that 
$$ 
\int |\log f_ke^{-(a_{f_k}\cdot x + b_{f_k})}|d\gamma \rightarrow 0 
$$
as $k \rightarrow \infty$. There exists $a \in \mathbb{R}^n$ and $b \in \mathbb{R}$ such that $a_{f_k} \rightarrow a$ and $b_{f_k} \rightarrow b$ as $k \rightarrow \infty$, along a subsequence. Up to a further subsequence, it follows that 
$$
\log f_ke^{-(a_{f_k}\cdot x + b_{f_k})} \rightarrow 0
$$
a.e. as $k \rightarrow \infty$. In particular, 
$$
f_k \rightarrow e^{a\cdot x+b}
$$
a.e. as $k \rightarrow \infty$ and $\displaystyle \inf_{x,k} f_k(x) \ge \alpha$ implies $a=0$.      
\end{proof}

\begin{proof}[Proof of Corollary \ref{c2}]
Suppose 
$$
\limsup_{k \rightarrow \infty} \|f_k-1\|_{L^1(d\gamma)} >0.
$$
Corollary \ref{c1} implies $f_k \rightarrow c$ a.e. along a subsequence as $k \rightarrow \infty$ (taken from the subsequence achieving the $\limsup$) for some constant $c>0$ and the dominated convergence theorem implies $c=1$ (via the mass constraint), a contradiction.   
\end{proof}

\begin{proof}[Proof of Theorem \ref{c5}]
Suppose 
$$
\limsup_{k \rightarrow \infty} \|f_k-1\|_{L^2(dm)} > 0
$$  
and let $dm_\epsilon=e^{-(2\pi-\epsilon) |x|^2}dx$; 
since 
$$
\int |f_k|^2 dm_\epsilon \le C,
$$    
it follows that $f_k \rightharpoonup f$ weakly in $L^2(dm_\epsilon)$ along a subsequence. In particular, 
$$
\int e^{-2\pi i \xi \cdot x}U^*f_k(x)dx \rightarrow 0,
$$ 
as $k \rightarrow \infty$, and therefore, $\mathcal{W}f_k(\xi) \rightarrow \mathcal{W}f(\xi)$ for every $\xi \in \mathbb{R}^n$. However, since $\delta_c(f_k) \rightarrow 0$, it follows from Carlen's theorem that
$$
\int |\mathcal{W}f_k|^2 \log |\mathcal{W}_kf|^2 dm \le \delta_c(f_k) \rightarrow 0. 
$$ 
Therefore, 
$$
|\mathcal{W}f_k|^2 \rightarrow 1,  
$$
in $L^1(dm)$ as $k \rightarrow \infty$ and $|\mathcal{W}f|^2=1$ a.e.; this implies $f=1$ (via Cramer's theorem). Furthermore, $f_k$ is normalized in $L^2(dm)$, therefore along a further subsequence, $f_k \rightharpoonup g$ weakly in $L^2(dm)$ and by uniqueness of weak limits, $g=1$. This yields $f_k \rightarrow 1$ in $L^2(dm)$, a contradiction, therefore the result follows.     
\end{proof}

\begin{proof}[Proof of Theorem \ref{aat}]
Let $f \in L^2(dm)$ be normalized and $T$ the optimal transport map between $|U^*|^2dx$ and $|\widehat{U^*f}|^2dx$. It follows that $T=\nabla \phi$ satisfies 
\begin{align*}
\log \det D^2 \phi &= \log \frac{|U^*|^2}{|\widehat{U^*f}(T)|^2}\\
&=\frac{n}{2}\log2-2\pi|x|^2-\log |\widehat{U^*f}(T)|^2
\end{align*}
so that 
$$
\int |U^*|^2\log |\widehat{U^*f}(T)|^2 dx+\int |U^*|^2\log \det D^2 \phi dx = \frac{n}{2} \log(2) - 2\pi \int |x|^2|U^*|^2dx. 
$$ 
In particular, let $E_{dm}(|\mathcal{W}f|^2)=\int |\mathcal{W}f|^2 \log |\mathcal{W}f|^2 dm$, so that if $\psi(x)=\phi(x)-\frac{1}{2}|x|^2$ and $\lambda_i$ are the eigenvalues of $D^2 \psi$, then   
\begin{align*}
2\pi \int |x|^2 |\widehat{U^*f}|^2dx-2\pi \int |x|^2|U^*|^2dx&=\int \log D^2 \phi dm+E_{dm}(|\mathcal{W}f|^2)\\
& \le \int \sum_{i=1}^n \log(1+\lambda_i) dm+E_{dm}(|\mathcal{W}f|^2)\\
& \le \int \Delta \psi dm+E_{dm}(|\mathcal{W}f|^2)\\
& \le 4\pi \int (T(x)-x) \cdot x dm+E_{dm}(|\mathcal{W}f|^2)\\
& \le 4 \pi W_2(|U^*|^2dx, |\widehat{U^*f}|^2dx) m_2(dm)+E_{dm}(|\mathcal{W}f|^2). 
\end{align*}
Next, 
\begin{align*}
4\pi^2 \int |x|^2 |\widehat{U^*f}|^2 dx&= \int |\widehat{\nabla U^* f}|^2dx\\
&=\int |\nabla U^* f|^2dx\\
&=4\pi^2 \int |x|^2|U^*f|^2 dx- 4 \pi \int x \cdot \nabla f f dm +\int |\nabla f|^2 dm   
\end{align*}
and since 
$$
-2 \int x \cdot \nabla f f dm=n-4\pi\int|x|^2|f|^2dm
$$
it follows that
\begin{align*} 
2\pi \int |x|^2dm-2\pi \int |x|^2|f|^2dm +\frac{1}{2\pi} \int |\nabla f|^2 dm &\le 2\sqrt{\pi n} W_2(dm, |\mathcal{W}f|^2dm) +E_{dm}(|\mathcal{W}f|^2)\\
&\le 2\sqrt{\pi n} W_2(dm, |\mathcal{W}f|^2dm) +\delta_c(f)\\
&\le 2\sqrt{\pi n} \sqrt{\delta_c(f)}+\delta_c(f).
\end{align*}
\end{proof}

\begin{proof}[Proof of Corollary \ref{c3}]
Suppose $\delta_c(f_k) \rightarrow 0$ as $k\rightarrow \infty$, but 
$$
\limsup_{k \rightarrow \infty} \int |\nabla f_k|^2 dm>0.
$$
Then since $f_k\in \{f:\int |f|^4dm \le C\}$, it follows that along a subsequence, $f_k^2 \rightharpoonup f^2$ weakly in $L^2(dm)$ for some $f \in L^4(dm)$. In particular, 
$$
\int |x|^2|f_k|^2 dm \rightarrow \int |x|^2|f|^2 dm 
$$
as $k \rightarrow \infty$ and Theorem \ref{mainthm1} implies $f=1$. Theorem \ref{aat} therefore yields 
$$
\int |\nabla f_k|^2 dm \rightarrow 0 
$$
as $k \rightarrow \infty$, a contradiction. 
\end{proof}

\begin{proof}[Proof of Corollary \ref{c4}]
Suppose $\delta_c(f_k) \rightarrow 0$ and 
$$\limsup_{k \rightarrow \infty} \|f_k-1\|_{L^1(d\gamma)} >0.$$
Theorem \ref{aat} implies 
$$
\sup_k \int |\nabla f_k|^2 dm \le M 
$$
and up to a subsequence, it follows that $f_k \rightarrow f$ in $L^2(dm)$ as $k \rightarrow \infty$, for some $f \in L^2(dm)$ and Theorem \ref{mainthm1} implies $f=1$, a contradiction.  
\end{proof}

\begin{proof}[Proof of Theorem \ref{mainthm2}]
Let $\{f_j\}$ be any subsequence of the original sequence. We will show that there exists a further subsequence $\{f_{j(k)}\}$ that converges to 1 in $L^{1}(\R^{n},d\gamma)$.
By \eqref{ineq:L1_L2}, it suffices to show that $\sqrt{f_{j(k)}}\to 1$ in $L^{2}(\R^{n}, d\gamma)$. Since the deficit converges to zero, $\{\I(f_{j})\}_{j\geq1}$ is uniformly bounded in $j$. Indeed, it follows from the HWI inequality \eqref{ineq:HWI} and Young's inequality that
\begin{eqnarray*}
	\H(f)
	&\leq& W_{2}(fd\gamma,d\gamma)\sqrt{\I(f)}-\frac{1}{2}W_{2}^{2}(fd\gamma,d\gamma) \\
	&\leq& \frac{1}{2t}\I(f)+\frac{t-1}{2}W_{2}^{2}(\gamma,\nu)
\end{eqnarray*}
for $t>1$. Since $\H(f)=\frac{1}{2}\I(f)-\delta(f)$, we have
\begin{eqnarray*}
	\frac{t-1}{2t}\I(f)
	\leq \delta(f)+\frac{t-1}{2}W_{2}^{2}(fd\gamma,d\gamma)
\end{eqnarray*}
If $T:\R^{n}\to\R^{n}$ transports $\gamma$ onto $\nu$, then
\begin{eqnarray*}
	W_{2}^{2}(fd\gamma,d\gamma)
	&=& \int_{\R^{n}}|T(x)-x|^{2}d\gamma \\
	&\leq& 2\Big(\int_{\R^{n}}|x|^{2}fd\gamma+\int_{\R^{n}}|x|^{2}d\gamma\Big) \\
	&\leq& 2(n+M),
\end{eqnarray*}
which shows that $\{\I(f_{j})\}_{j\geq1}$ is uniformly bounded in $j$.
Let $h_{j}=f_{j}\gamma$ where $\gamma(x)=(2\pi)^{-\frac{n}{2}}e^{-\frac{|x|^{2}}{2}}$, then
\begin{eqnarray*}
	\I(f_{j})
	&=& 4\int_{\R^{n}}|\nabla(\sqrt{f_{j}})|^{2}d\gamma \\
	&=& 4\int_{\R^{n}}|\nabla(\sqrt{h_{j}})-\sqrt{f_{j}}\nabla(\sqrt{\gamma})|^{2}dx \\
	&=& 4\int_{\R^{n}}|\nabla(\sqrt{h_{j}})|^{2}dx-2n+\int_{\R^{n}}|x|^{2}d\nu_{j}.
\end{eqnarray*}
So $\{\sqrt{h_{j}}\}_{j\geq1}$ is bounded in $W^{1,2}(\R^{n})$.

Let $\Omega\subset\R^{n}$ be a bounded domain. The Rellich-Kondrashov theorem says that there exists a subsequence $\{j(k)\}_{k\geq 1}$ such that $\sqrt{h_{j(k)}}$ converges to a function $g$ in $L^{2}(\Omega)$. Since $\sqrt{h_{j}}$ is nonnegative for all $j$, we let $g=\sqrt{f\gamma}$. 

We claim that $f=1$ a.e. in $\Omega$. Let $d\nu_{j}=f_{j}d\gamma$. Since $\delta(f_{j(k)})$ converges to 0, we have $W_{1}(\nu_{j(k)},\gamma)\to 0 $ by Theorem \ref{mainthm1}. This implies that $\nu_{j(k)} \rightharpoonup \gamma$ weakly, that is,
\begin{eqnarray*}
	\lim_{k\to\infty}\int_{\R^{n}}\varphi d\nu_{j(k)}=\int_{\R^{n}}\varphi d\gamma
\end{eqnarray*}
for all $\varphi\in C^{0}_{b}(\R^{n})$. Let $\varep>0$ and $\varphi$ be a bounded continuous function such that $|\varphi|\leq K$ for some $K>0$. We pick $N\in\N$ such that
\begin{eqnarray*}
	\Abs{\int_{\Omega}\varphi d\nu_{j(k)}-\int_{\Omega}\varphi d\gamma}\leq \frac{\varep}{2}, \quad
	\Abs{\int_{\Omega}|\sqrt{f_{j(k)}}-\sqrt{f}|^{2}d\gamma}\leq \frac{\varep^{2}}{16K^{2}}
\end{eqnarray*}
for any $k\geq N$. Since $\int_{\Omega}f_{j(k)}d\gamma\leq 1$ for all $k$ and 
\begin{eqnarray*}
	\int_{\Omega}fd\gamma
	\leq \int_{\Omega}|\sqrt{f_{j(k)}}-\sqrt{f}|^{2}d\gamma + \int_{\Omega}f_{j(k)}d\gamma,
\end{eqnarray*}
we obtain $\int_{\Omega}fd\gamma\leq 1$. One can see that
\begin{eqnarray*}
	\Abs{\int_{\Omega}(f-1)\varphi d\gamma}
	&\leq & \Abs{\int_{\Omega}(f_{j(k)}-1)\varphi d\gamma} + \Abs{\int_{\Omega}(f-f_{j(k)})\varphi d\gamma} \\
	&\leq & \frac{\varep}{2} + K\int_{\Omega}|f-f_{j(k)}|d\gamma\\
	&\leq & \frac{\varep}{2} + K\Paren{\int_{\Omega}|\sqrt{f}-\sqrt{f_{j(k)}}|^{2}d\gamma}^{\frac{1}{2}}\Paren{\int_{\Omega}|\sqrt{f}+\sqrt{f_{j(k)}}|^{2}d\gamma}^{\frac{1}{2}}\\
	&\leq & \varep. 
\end{eqnarray*}
This holds for all $\varep>0$ and all $\varphi\in C^{0}_{b}(\Omega)$. Thus, we conclude that $f=1$ a.e. in $\Omega$.

We will now extend this claim to $\R^{n}$. The diagonalization argument enables us to do it. Let $B_{k}:=\{x\in\R^{n}:|x|<k\}$ for each $k\in\N$. First, choose a subsequence $\{f_{1,j}\}_{j\geq 1}$ such that $\sqrt{f_{1,j}}\to 1$ in $L^{2}(B_{1},d\gamma)$ as $j\to\infty$. On $B_{2}$, we can find a further subsequence $\{f_{2,j}\}_{j\geq 1}\subseteq\{f_{1,j}\}_{j\geq 1}$ such that $\sqrt{f_{2,j}}\to 1$ in $L^{2}(B_{2},d\gamma)$ as $j\to\infty$. Iterating this procedure, we have $\{f_{k,j}\}_{j,k\geq 1}$ such that $\sqrt{f_{k,j}}\to 1$ in $L^{2}(B_{k},d\gamma)$ as $j\to \infty$. Define $f^{(k)}:=f_{k,k}$ and let $\varep>0$.  

Since $\nu_{j}$ converges weakly to $\gamma$, the family $\{\nu_{j}\}$ is tight by Prokhorov's theorem. Thus, we can choose $N_{1}\in\N$ be such that $\int_{\R^{n}\setminus B_{k}}d\nu_{j}<\frac{\varep}{8}$ and $\int_{\R^{n}\setminus B_{k}}d\gamma<\frac{\varep}{8}$ for all $k\geq N_{1}$. By definition, there exists $N_{2}\in\N$ such that $\int_{B_{k}}|\sqrt{f^{(k)}}-1|^{2}d\gamma<\frac{\varep}{2}$ for all $k\geq N_{2}$. Combining our observation, we have
	\begin{eqnarray*}
		\Abs{\int_{\R^{n}}|\sqrt{f^{(k)}}-1|^{2}d\gamma}
		&\leq & \Abs{\int_{B_{k}}|\sqrt{f^{(k)}}-1|^{2}d\gamma}+2\Abs{\int_{\R^{n}\setminus B_{k}}(f^{(k)}+1)d\gamma}\\
		&<&\varep
	\end{eqnarray*}
	for any $k\geq \max\{N_{1},N_{2}\}$. Therefore, we conclude that $\sqrt{f^{(k)}}\to 1$ in $L^{2}(\R^{n}, d\gamma)$ as desired.
\end{proof}

\appendix
\section{A Stability result in terms of Kolmogorov distance}

In this section, we prove stability results of LSI in terms of the Kolmogorov distance. We assume that the probability measure belongs to $\cP_{2}^{M}$ and also satisfies further integrability and second moment assumptions. For probability measures $\mu$ and $\nu$ on $\R$, the Kolmogorov distance is given by
\begin{eqnarray*}
	\dkol{\mu,\nu}=\sup_{x\in\R}|\mu((-\infty,x])-\nu((-\infty,x])|.
\end{eqnarray*}

\begin{theorem}\label{mainthm4}
	Let $f$ be a symmetric nonnegative function on $\R$ 
	and $d\mu=fd\gamma\in \cP_{2}^{M}(\R)$ with $m_{2}(\mu)=k$. Let $v(x)=f(\frac{x}{\sqrt{2}})^{2}$ and assume that $d\nu:=vd\gamma$ is a probability measure. 
	Then there exists $\varep_{0}>0$ such that if $\delta(v)\le\varep\leq\varep_{0}$, then
	\begin{eqnarray}\label{eq:thm4_1}
		\dkol{\mu,\gamma_{\varep}}\le
		\frac{C_{k}}{\sqrt{\log\frac{1}{\varep}}}
	\end{eqnarray}
	where $C_{k}$ depends on $k$ and $\gamma_{\varep}$ is a Gaussian measure given by
	\begin{eqnarray*}
		d\gamma_{\varep}
		= \frac{1}{\sqrt{4\pi\sigma_{\varep}^{2}}}e^{-\frac{|x|^{2}}{4\sigma_{\varep}^{2}}}dx,
	\end{eqnarray*}
	for some $\sigma_{\varep}^{2}>0$ depending on $\varep$.
\end{theorem}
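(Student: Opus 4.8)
The plan is to treat $v$ as a near‑optimizer of LSI, propagate near‑Gaussianity from $v$ (equivalently from $\nu=v\,d\gamma$) back to $f$ (equivalently $\mu$) by a deconvolution argument, and absorb the loss of this last step into the logarithmic modulus; the bound $m_{2}(\mu)=k$ is what makes the propagation quantitative and what supplies tail control, which is why the final constant depends on $k$.

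\textbf{Step 1: a dictionary between $\nu$ and $\mu$.} Write $p=f\gamma$ for the Lebesgue density of $\mu$. Using $\gamma(x/\sqrt2)^{2}=\gamma(x)/\sqrt{2\pi}$ one gets the algebraic identity $\tfrac{d\nu}{dx}(x)=v(x)\gamma(x)=\sqrt{2\pi}\,p(x/\sqrt2)^{2}$, so $\nu$ is, after the dilation $x\mapsto\sqrt2\,x$, the probability measure whose Lebesgue density is proportional to $p^{2}$; on the Fourier side its characteristic function is proportional to the self‑convolution of that of $\mu$. This identity is the analytic shadow of Carlen's factorization: for the product $h(x,y)=f(x)f(y)$ the rotated function $\widetilde h(u,w)=f(\tfrac{u+w}{\sqrt2})f(\tfrac{u-w}{\sqrt2})$ is itself a tensor product exactly when $f$ is Gaussian, and $v$ is the diagonal slice $\widetilde h(\cdot,0)$; a convolution‑type deficit inequality in the spirit of \cite{Feo2017} together with Carlen's superadditivity of the Fisher information bounds $\delta(v)$ below by the LSI deficit of a self‑convolution of $\mu$.

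\textbf{Step 2: from $\delta(v)\le\varep$ to $\nu$ near a Gaussian.} Since $f$ is symmetric, $v$ and $\nu$ are symmetric, so the only nearby centered optimizer is the constant $1$. Near‑optimality alone does not force $v\to1$, so one must first control $\nu$: combining $\int v\,d\gamma=1$ with $d\mu\in\cP_{2}^{M}$, $m_{2}(\mu)=k$, and the one–dimensional $L^{\infty}$ bound on $p$ coming from $\sqrt f\in W^{1,2}(d\gamma)$, one bounds the relevant second moment of $\nu$; then the HWI$+$Talagrand mechanism behind Theorems \ref{mainthm1} and \ref{mainthm3}, applied to $v$, makes $\|v-1\|_{L^{1}(d\gamma)}$ — hence $W_{1}(\nu,\gamma_{\varep})$ — small, but only relative to a Gaussian $\gamma_{\varep}$ whose variance is pinned down only up to the size of the deficit, via the scaling identity $2\delta(v)\ge(2\H(v)+m_{2}(\gamma)-m_{2}(\nu))^{2}$. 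This forced imprecision in the variance is exactly why $\sigma_{\varep}^{2}$ must depend on $\varep$.

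\textbf{Step 3: deconvolution and the Kolmogorov bound; the main obstacle.} Through Step 1, closeness of $\nu$ to $\gamma_{\varep}$ amounts to $\|p(\cdot/\sqrt2)^{2}-q_{\varep}^{2}\|_{L^{1}(dx)}$ being small for a Gaussian density $q_{\varep}$, and one must pass from this to $p$ close to $q_{\varep}$. Split $\R$ at a radius $R$: on $\{|x|\le R\}$ write $|p-q_{\varep}|=|p^{2}-q_{\varep}^{2}|/(p+q_{\varep})\le |p^{2}-q_{\varep}^{2}|/q_{\varep}(R)$ and use the Gaussian lower bound $q_{\varep}(R)\ge c\,e^{-R^{2}/(4\sigma_{\varep}^{2})}$, while on $\{|x|>R\}$ Chebyshev gives $\int_{|x|>R}d\mu\le k/R^{2}$; optimizing $R^{2}\sim\log\tfrac1\varep$ gives a bound of order $(\log\tfrac1\varep)^{-1/2}$ once the size of the $L^1$–error from Step 2 is taken into account. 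Equivalently, this deconvolution can be run through the quantitative Cramér theorem \cite{Bobkov2016}, whose intrinsic stability rate is $(\log\tfrac1\varep)^{-1/2}$. Then $\dkol{\mu,\gamma_{\varep}}\le\tfrac12\|p-q_{\varep}\|_{L^{1}(dx)}$ yields \eqref{eq:thm4_1}. The hard part is precisely this deconvolution: recovering "$p$ close to Gaussian'' from "$p^{2}$ (equivalently the self‑convolution of $\mu$) close to Gaussian'' is an ill‑posed inverse step, forcing a logarithmic modulus of continuity, and carrying it out with a constant depending on $k$ alone — using the moment bound rather than a pointwise bound on $f$ — is the delicate point, and it is the reason one cannot replace $\gamma_{\varep}$ by a fixed Gaussian.
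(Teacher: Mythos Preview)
Your outline conflates two distinct inverse problems and bypasses the actual engine of the argument. The paper does \emph{not} apply LSI stability (Theorems~\ref{mainthm1}/\ref{mainthm3}) to $v$. After the rescaling $\tilde f(x)=f(\sqrt{2\pi}\,x)$ and $h=\tilde f g$, the paper invokes the convolution--type deficit bound of \cite{Feo2017} (Theorem~\ref{FFthm}) to control $\|h\ast h-g\ast g\|_{2}$ directly by $\delta_c(\tilde f)^{1/4}=\delta(v)^{1/4}$, then passes to $L^{1}$ via the interpolation Lemma~\ref{L1L2lemma} (this is where $k$ enters), obtaining that the \emph{spatial} self--convolution $p\ast p$ of the rescaled density of $\mu$ is close to the standard Gaussian in Kolmogorov distance. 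That is exactly the input to quantitative Cram\'er (Theorem~\ref{Bobkov}), which then yields $\dkol{F,\Phi_{\sigma(\eta)}}\le C_k(\log\tfrac1\varep)^{-1/2}$. The $\varep$--dependence of $\sigma_\varep$ comes from the truncated variance at level $N(\eta)=1+\sqrt{2\log(1/\eta)}$ in Theorem~\ref{Bobkov}, not from the scaling identity you cite.

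Your Step~2 route has a genuine gap: to apply Theorem~\ref{mainthm3} (or \ref{mainthm1}) to $v$ you need $\nu=v\,d\gamma\in\cP_{2}^{M}$, but $m_{2}(\nu)$ is proportional to $\int y^{2}p(y)^{2}\,dy$ and is \emph{not} controlled by $m_{2}(\mu)=k$ alone; your appeal to an $L^{\infty}$ bound on $p$ from $\sqrt f\in W^{1,2}(d\gamma)$ would introduce dependence on $\I(f)$, which is not among the hypotheses and for which you have no bound. Your Step~3 then mixes up ``$p^{2}$ close to Gaussian'' (a de--squaring problem, which is what Step~2 would deliver if it worked) with ``$p\ast p$ close to Gaussian'' (Cram\'er's setting); these are different inverse problems, and the quantitative Cram\'er theorem of \cite{Bobkov2016} applies only to the latter. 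The splitting argument you sketch attacks de--squaring, not deconvolution, so the sentence ``Equivalently, this deconvolution can be run through the quantitative Cram\'er theorem'' is incorrect. In short, the allusion to \cite{Feo2017} in your Step~1 is the right thread, but you do not pull it: Feo's bound is what produces a genuine convolution estimate, and that --- not the $p^{2}$ picture --- is what feeds Cram\'er.
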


\begin{theorem}\label{mainthm5}
	Let $f$ be a symmetric nonnegative function on $\R$, $d\mu=fd\gamma$, and $m_{2}(\mu)=1$. Let $v(x)=f(\frac{x}{\sqrt{2}})^{2}$ and assume that $d\nu:=vd\gamma$ is a probability measure. Let $\Psi(t)=e^{-\frac{1}{t^{2}}}$, then there exist constants $c_{1}$ and $c_{2}$ such that
	\begin{eqnarray*}
		\delta(v)
		\geq c_{1}\Psi(c_{2}\dkol{\mu,\gamma}).
	\end{eqnarray*}
\end{theorem}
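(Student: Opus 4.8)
The plan is to reverse-engineer the logic of Theorem \ref{mainthm4}: there, a smallness hypothesis $\delta(v)\le\varep$ produced a Kolmogorov-distance bound decaying like $(\log\frac1\varep)^{-1/2}$; here we want a converse lower bound on $\delta(v)$ in terms of $\dkol{\mu,\gamma}$. The key structural fact is the same change of variables: since $f$ is symmetric, $v(x)=f(x/\sqrt2)^2$, and the deficit $\delta(v)$ controls (via the factorization/superadditivity machinery underlying Carlen's proof, or equivalently via the optimal-transport bound $\int|T(x)-x+\nabla\log v|^2\,v\,d\gamma\le 2\delta(v)$ used repeatedly above) how far $\nu=v\,d\gamma$, and hence $\mu=f\,d\gamma$, is from Gaussian. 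First I would record the precise relationship between $\delta(v)$ and the discrepancy of $\mu$ from the Gaussian family $\{\gamma_\varep\}$ that was established in the proof of Theorem \ref{mainthm4}; that proof must contain an intermediate inequality of the form $\dkol{\mu,\gamma_{\delta(v)}}\le C_k(\log\frac{1}{\delta(v)})^{-1/2}$ valid whenever $\delta(v)$ is small, together with control $|\sigma_{\varep}^2-\tfrac12|\to 0$ as $\varep\to 0$ so that $\gamma_\varep\to\gamma$ in Kolmogorov distance at a comparable rate (here $m_2(\mu)=1$ pins down the limiting variance).

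Next I would combine these two pieces by the triangle inequality: $\dkol{\mu,\gamma}\le \dkol{\mu,\gamma_{\delta(v)}}+\dkol{\gamma_{\delta(v)},\gamma}\le C(\log\tfrac{1}{\delta(v)})^{-1/2}$, where the second term is handled by a direct estimate on the Kolmogorov distance between two centered Gaussians with nearly equal variances (an elementary computation giving a bound linear in $|\sigma_\varep^2-\tfrac12|$, which in turn is $O((\log\frac1\varep)^{-1/2})$ or better from the Theorem \ref{mainthm4} analysis). Having $\dkol{\mu,\gamma}\le C(\log\tfrac{1}{\delta(v)})^{-1/2}$ for $\delta(v)\le\varep_0$, I would solve for $\delta(v)$: this rearranges to $\log\tfrac{1}{\delta(v)}\le C^2\dkol{\mu,\gamma}^{-2}$, i.e. $\delta(v)\ge e^{-C^2/\dkol{\mu,\gamma}^2}=\Psi(C'\dkol{\mu,\gamma})$ with $\Psi(t)=e^{-1/t^2}$ and $c_2=1/C$. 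For the complementary regime $\delta(v)>\varep_0$, the bound is trivial: since the Kolmogorov distance is at most $1$, one has $\Psi(c_2\dkol{\mu,\gamma})\le \Psi(c_2)$, so choosing $c_1\le \varep_0/\Psi(c_2)$ makes $\delta(v)\ge\varep_0\ge c_1\Psi(c_2\dkol{\mu,\gamma})$ hold automatically. Taking $c_1=\min\{\varep_0/\Psi(c_2),\,1\}$ (adjusted) closes both cases.

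The main obstacle is extracting the \emph{quantitative} intermediate step from the proof of Theorem \ref{mainthm4} in a form that is genuinely invertible — i.e. making sure the estimate $\dkol{\mu,\gamma_\varep}\lesssim(\log\frac1\varep)^{-1/2}$ is not merely an upper bound obtained after discarding information, but that no step introduces a one-sided loss that would prevent reading it backwards. In particular I would need the variance correction $\sigma_\varep^2$ and the constant $C_k$ to depend on $\varep$ only through the explicit $(\log\frac1\varep)^{-1/2}$ rate (which is why the hypothesis $m_2(\mu)=1$ rather than $m_2(\mu)=k$ appears in Theorem \ref{mainthm5}: it removes the free parameter and forces $\gamma_\varep\to\gamma$). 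If, instead, the proof of Theorem \ref{mainthm4} only controls $\dkol{\mu,\gamma_\varep}$ and not $\dkol{\mu,\gamma}$ directly, the burden shifts entirely onto the Gaussian-vs-Gaussian comparison, which is routine but must be done carefully to confirm it does not degrade the rate. Modulo that bookkeeping, the argument is a clean ``invert the modulus of continuity'' deduction, exactly parallel to how Corollary \ref{maincor3} inverts Theorem \ref{mainthm3}.
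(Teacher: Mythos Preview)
Your overall inversion strategy --- obtain $\dkol{\mu,\gamma}\le C(\log\tfrac{1}{\delta(v)})^{-1/2}$ for small $\delta(v)$, solve for $\delta(v)$ to produce the $\Psi$ bound, then absorb the regime $\delta(v)>\varep_0$ using boundedness of $\Psi$ --- is exactly what the paper does. Where you diverge from the paper is in how you reach the intermediate inequality, and that divergence contains a real gap.

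You propose to recycle Theorem~\ref{mainthm4}, which yields $\dkol{\mu,\gamma_\varep}\lesssim(\log\tfrac1\varep)^{-1/2}$ with a Gaussian $\gamma_\varep$ of \emph{truncated} variance $\sigma(\eta)^2=\int_{-N(\eta)}^{N(\eta)}x^2\,p(x)\,dx$, and then close the gap to $\gamma$ by a triangle inequality. The Gaussian-vs-Gaussian step is indeed elementary; the problem is the rate at which $\sigma(\eta)^2\to\tfrac12$. One has $\tfrac12-\sigma(\eta)^2=\int_{|x|>N(\eta)}x^2\,p(x)\,dx$, and the hypothesis $m_2(\mu)=1$ only pins down the \emph{limit} of $\sigma(\eta)^2$, not the rate: with a bare second-moment assumption this tail can decay arbitrarily slowly as $N(\eta)\to\infty$, so your claimed bound $|\sigma_\varep^2-\tfrac12|=O((\log\tfrac1\varep)^{-1/2})$ is not available. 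This is precisely the obstacle you flag, but it is not ``routine bookkeeping'' --- it is a genuine missing ingredient.

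The paper sidesteps the issue entirely by invoking a different Cram\'er stability result, Theorem~\ref{Bobkov2}, in place of Theorem~\ref{Bobkov}. Because $m_2(\mu)=1$ forces $\Var X_1=\Var X_2=\tfrac12$ exactly, Theorem~\ref{Bobkov2} applies with the \emph{true} variances and delivers $\dkol{F,\Phi_{1/\sqrt2}}\le C(\log\tfrac1\eta)^{-1/2}$ directly, with no truncated variance and hence no triangle-inequality correction. A change of variables identifies $\dkol{F,\Phi_{1/\sqrt2}}=\dkol{\mu,\gamma}$, and from there the inversion and large-deficit steps proceed exactly as you outline.
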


\begin{remark}
Note that for $d\gamma_{\frac{1}{\sqrt{2}}}=\frac{1}{\sqrt{\pi}}e^{-|x|^{2}}dx$,
\begin{eqnarray*}
	\delta(v)
	&=& \int |\nabla f|^{2}d\gamma_{\frac{1}{\sqrt{2}}}-\int |f|^{2}\log |f|^{2}d\gamma_{\frac{1}{\sqrt{2}}}.
\end{eqnarray*}
\end{remark}
\begin{remark}
By Proposition \ref{probmetricbound_general}, Theorem \ref{mainthm1} implies that
\begin{eqnarray*}
	\delta(f)\geq C_{M}\min\{\dkol{\mu,\gamma}^{2},\dkol{\mu,\gamma}^{8}\}.
\end{eqnarray*}
On the other hand, it follows from Proposition \ref{probmetricbound_W_1} that Theorem \ref{mainthm5} implies
\begin{eqnarray*}
	\delta(v)
		\geq c_{1}\Psi(c_{2}W_{1}(\mu,\gamma)^{2}).
\end{eqnarray*}
Note that if $t$ is small then $\Psi(t)$ is bounded by the map $t\mapsto \min\{t^{2},t^{8}\}$. So the stability result of Theorem \ref{mainthm1} is stronger than that of Theorem \ref{mainthm5}. Notice also that these Theorem \ref{mainthm5} has a scaled version of the deficit $\delta(v)$.
\end{remark}

The key ingredients to the proofs are the LSI stability result of \cite{Feo2017} and the stability of Cram\'er's theorem. To state these results, we recall some notations. Let $g(x)=2^{\frac{n}{4}}e^{-\pi|x|^{2}}$ and $dm=g^{2}(x)dx$. If $u_{f}(x)=f(2\sqrt{\pi}x)^{\frac{1}{2}}$, then we have $\delta_{c}(u_{f})=\delta(f)$ by scaling. 

\begin{theorem}[{\cite[Theorem 4.1]{Feo2017}}]\label{FFthm}
	Let $f\in L^{2}(dm)$, $f(x)=f(-x)$, $\norm{f}_{2}=1$, and $h=fg$. Then there exists a constant $C>0$ such that 
	\begin{eqnarray*}
		\int_{\R} |h\ast h-g\ast g|^{2}dx
		\leq C\delta_{c}(f)^{\frac{1}{4}}
		(\norm{h-g}_{\frac{6}{5}}^{2}+\norm{h-g}_{2})^{\frac{3}{2}}	.
	\end{eqnarray*}
\end{theorem}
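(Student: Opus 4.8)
This statement is \cite[Theorem 4.1]{Feo2017}, so in the present paper it is invoked as an external input; what follows is the route I would take to prove such an estimate. The idea is to turn Carlen's \emph{logarithmic} uncertainty bound \eqref{carlen} into an $L^{1}$ deficit bound on the Fourier side via Pinsker's inequality, and then interpolate up to the asserted $L^{2}$ estimate.

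\emph{Step 1 (pass to Fourier variables).} The adjoint $U^{*}$ acts as multiplication by $g$, so $U^{*}f=fg=h$, whence $\cW f=U\wh h$ and, after bookkeeping the $2^{\pm n/4}$ factors, $|\cW f|^{2}\,dm=|\wh h|^{2}\,dx$ while $dm=g^{2}\,dx$. Since $\norm{f}_{L^{2}(dm)}=\norm{h}_{L^{2}(dx)}=1$, both $|\wh h|^{2}\,dx$ and $g^{2}\,dx$ are probability measures on $\R$, and
\[
\int|\cW f|^{2}\log|\cW f|^{2}\,dm=\int|\wh h|^{2}\log\frac{|\wh h|^{2}}{g^{2}}\,dx
\]
is exactly the relative entropy of $|\wh h|^{2}\,dx$ with respect to $g^{2}\,dx$. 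By \eqref{carlen} this is $\le\delta_{c}(f)$, so Pinsker's inequality gives $\norm{|\wh h|^{2}-g^{2}}_{L^{1}(dx)}\le C\delta_{c}(f)^{1/2}$. Because $f$, hence $h$, is real and even, $\wh h$ is real, so $|\wh h|^{2}=\wh h^{2}=\wh{h*h}$ and $g^{2}=\wh{g*g}$; thus $\wh h^{2}-g^{2}=\wh{h*h-g*g}$ and, by Plancherel, $\norm{h*h-g*g}_{2}^{2}=\norm{\wh h^{2}-g^{2}}_{2}^{2}$.

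\emph{Step 2 (interpolate and estimate the $L^{3}$ norm).} Put $\phi=\wh h^{2}-g^{2}$. Log-convexity of $L^{p}$ norms gives $\norm{\phi}_{2}^{2}\le\norm{\phi}_{1}^{1/2}\norm{\phi}_{3}^{3/2}\le C\delta_{c}(f)^{1/4}\norm{\phi}_{3}^{3/2}$, so it remains to bound $\norm{\phi}_{3}$ by a polynomial in $\norm{h-g}_{6/5}$ and $\norm{h-g}_{2}$. I would factor $\phi=(\wh h-g)(\wh h+g)=(\wh h-g)^{2}+2g(\wh h-g)$: the quadratic term is $\norm{\wh{h-g}}_{6}^{2}\le C\norm{h-g}_{6/5}^{2}$ by Hölder ($\tfrac13=\tfrac16+\tfrac16$) and Hausdorff--Young ($L^{6/5}\to L^{6}$), while the cross term is controlled by pairing $g$ (Schwartz, hence in every $L^{p}$) against $\wh{h-g}$ via Hölder, using Plancherel ($\norm{\wh{h-g}}_{2}=\norm{h-g}_{2}$) together with Hausdorff--Young. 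Assembling the two pieces and raising to the power $3/2$ yields a bound of the claimed shape. (If $\norm{h-g}_{6/5}=\infty$ or $\delta_{c}(f)=\infty$ the inequality is vacuous, and $h-g\in L^{2}$ always since $\norm{h}_{L^{2}(dx)}=1$.)

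The decisive step is the first one: once the $U$ and $\mathcal F$ normalizations are tracked carefully, the left-hand side of \eqref{carlen} is literally a Kullback--Leibler divergence between two probability measures on $\R$, so Pinsker upgrades Carlen's logarithmic statement to an honest $L^{1}$ bound on $|\wh h|^{2}-g^{2}$; everything after that is a routine interpolation, the only fiddly matter being to organize the Hölder/Hausdorff--Young bookkeeping in Step 2 so that precisely the combination $\norm{h-g}_{6/5}^{2}+\norm{h-g}_{2}$ emerges inside the $3/2$-power. Note that nonnegativity of $f$ is not needed here (only realness and evenness, so that $\wh h$ is real); it enters only later, through Cram\'er's theorem, in the analysis of the equality cases.
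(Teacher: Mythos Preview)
The paper does not supply a proof of this theorem: it is quoted verbatim from \cite{Feo2017} and used as a black box in the appendix (in the proofs of Theorems~\ref{mainthm4} and~\ref{mainthm5}). So there is no ``paper's proof'' to compare your sketch against; I can only assess the sketch itself.

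Your Step~1 is entirely correct and is indeed the heart of such an argument. Once one unwinds $U$ and $\mathcal F$, the left side of \eqref{carlen} is precisely the Kullback--Leibler divergence of $|\wh h|^{2}\,dx$ relative to $g^{2}\,dx$, so Pinsker gives $\|\wh h^{2}-g^{2}\|_{1}\le C\delta_{c}(f)^{1/2}$; evenness of $h$ makes $\wh h$ real, and Plancherel turns $\|h*h-g*g\|_{2}$ into $\|\wh h^{2}-g^{2}\|_{2}$. This is exactly the mechanism behind the result.

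Step~2 is correct in outline but imprecise at the cross term. The interpolation $\|\phi\|_{2}^{2}\le\|\phi\|_{1}^{1/2}\|\phi\|_{3}^{3/2}$ and the bound $\|(\wh h-g)^{2}\|_{3}=\|\wh{h-g}\|_{6}^{2}\le\|h-g\|_{6/5}^{2}$ (Hausdorff--Young, $L^{6/5}\to L^{6}$) are fine. But you cannot extract $\|h-g\|_{2}$ from $\|g\,\wh{h-g}\|_{3}$ by the H\"older pairing you describe: placing $\wh{h-g}$ in $L^{2}$ would force $g$ into $L^{p}$ with $\tfrac{1}{p}=\tfrac{1}{3}-\tfrac{1}{2}<0$, which is impossible. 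What H\"older with $g\in L^{6}$ and Hausdorff--Young actually give is $\|g\,\wh{h-g}\|_{3}\le C\|h-g\|_{6/5}$, so your argument as written yields $\|\phi\|_{3}\le C(\|h-g\|_{6/5}^{2}+\|h-g\|_{6/5})$ rather than the stated combination $\|h-g\|_{6/5}^{2}+\|h-g\|_{2}$. Interpolating $\|\wh{h-g}\|_{q}$ between $q=2$ (Plancherel) and $q=6$ (Hausdorff--Young) only produces mixed powers $\|h-g\|_{2}^{\alpha}\|h-g\|_{6/5}^{1-\alpha}$, not a clean $\|h-g\|_{2}$. For the applications in this paper the discrepancy is harmless, since only boundedness of $\|h-g\|_{6/5}$ and $\|h-g\|_{2}$ is used downstream; but if you want the inequality exactly as stated you must organize the H\"older/Hausdorff--Young bookkeeping differently, and for that you should consult \cite{Feo2017} directly.
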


Cram\'er's theorem says that if the sum of two independent random variables has a normal distribution, then both random variables are normal. We recall the stability of Cram\'er's theorem. Let $X$ and $Y$ be independent random variables with distribution functions $F$ and $G$ respectively, then Kolmogorov distance between $X$ and $Y$ is
\begin{eqnarray*}
	\dkol{F,G}=\sup_{x\in\R}|F(x)-G(x)|.
\end{eqnarray*}
Let $F\ast G$ be the distribution of the sum $X+Y$ so that it is defined by
\begin{eqnarray*}
	F\ast G(x):=\int_{\R}F(x-y)d	G(y).
\end{eqnarray*} 
If $p_{1}$ and $p_{2}$ are density functions of $X$ and $Y$, one can write it as
\begin{eqnarray*}
	F\ast G(x)=\int_{-\infty}^{x}p_{1}\ast p_{2}(t)dt.
\end{eqnarray*}
We use $\gamma_{b, \sigma}(x)=\frac{1}{\sqrt{2\pi\sigma^{2}}}e^{-\frac{|x-b|^{2}}{2\sigma^{2}}}$ and $\Phi_{b,\sigma}$ to denote the centered Gaussian density with variance $\sigma^{2}$ and its distribution function. We also denote by $\Phi_{\sigma}:=\Phi_{0,\sigma}$ and $\Phi:=\Phi_{0,1}$. The following stability result was proven in \cite{Golinskiui1991} and \cite{Senatov1976}.

\begin{theorem}[{\cite[Theorem 2.2]{Bobkov2016}}]\label{Bobkov}
	Let $\varep>0$ and $N=N(\varep)=1+\sqrt{2\log(1/\varep)}$. Let $X_{1}, X_{2}$ be random variable with distribution functions $F_{1}, F_{2}$. We also put
	\begin{eqnarray*}
		a_{i}=\int_{-N}^{N}xdF_{i}(x),\quad
		\sigma_{i}^{2}=\int_{-N}^{N}x^{2}dF_{i}(x)-a_{i}^{2}
	\end{eqnarray*}
	for $i=1,2$. Suppose that $F_{1}$ and $F_{2}$ have median zero and $\sigma_{1},\sigma_{2}>0$.
	If $\dkol{F_{1}\ast F_{2},\Phi}\leq \varep<1$, then there exist absolute constants $C_{1}$ and $C_{2}$ such that for $i=1,2$, 
	\begin{eqnarray*}
		\dkol{F_{i},\Phi_{a_{i},\sigma_{i}}}
		\leq \frac{C_{i}}{\sigma_{i}\sqrt{\log(1/\varep)}}\min\{\frac{1}{\sqrt{\sigma_{i}}},\log\log\frac{e^{e}}{\varep}\}.
	\end{eqnarray*}
\end{theorem}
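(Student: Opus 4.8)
The plan is to argue at the level of characteristic functions, combined with an Esseen-type smoothing inequality. Writing $f_i(t)=\mathbb{E}e^{itX_i}$, one has $f_1f_2=\widehat{F_1\ast F_2}$ as characteristic functions, and the target is to show that on a frequency window $|t|\le T$ with $T\asymp\sqrt{\log(1/\varep)}$, each factor $f_i$ is uniformly within $O\bigl((1+|t|)/\sqrt{\log(1/\varep)}\bigr)$ of the Gaussian characteristic function $g_i(t):=e^{ia_it-\sigma_i^2t^2/2}$ of $\gamma_{a_i,\sigma_i}$, built from the $N$-truncated mean and variance. Esseen's inequality then turns $\int_{-T}^{T}|f_i(t)-g_i(t)|\,|t|^{-1}\,dt$ plus a remainder of order $(\sigma_iT)^{-1}$ into the bound on $\dkol{F_i,\Phi_{a_i,\sigma_i}}$, with $T^{-1}\asymp(\log(1/\varep))^{-1/2}$ giving the stated rate.

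First I would use the median hypotheses to control tails. Since $F_2$ has median zero, $\mathbb{P}(X_2\ge0)\ge\tfrac12$ and $\mathbb{P}(X_2\le0)\ge\tfrac12$, so by independence $\mathbb{P}(X_1>x)\le2\,\mathbb{P}(X_1+X_2>x)\le2\bigl(1-\Phi(x)+\varep\bigr)$, and symmetrically for the left tail and for $X_2$. As $N=1+\sqrt{2\log(1/\varep)}$ satisfies $1-\Phi(N)\le e^{-N^2/2}\le\varep$, this yields $\mathbb{P}(|X_i|>N)\lesssim\varep$. Hence the truncated quantities $a_i,\sigma_i^2$ agree with genuine moments up to errors of size $\varep\cdot\mathrm{poly}(N)$; the characteristic function $\wt f_i$ of the $N$-truncated renormalized law satisfies $|f_i(t)-\wt f_i(t)|\lesssim\varep$ and, being the transform of a compactly supported law, is entire with $\wt f_i(t)=\exp\bigl(ia_it-\tfrac12\sigma_i^2t^2+\wt r_i(t)\bigr)$ for a remainder $\wt r_i$; and, applied to $F_1\ast F_2$, the same bound validates the integration by parts giving $|f_1(t)f_2(t)-e^{-t^2/2}|\lesssim(1+|t|)\varep$ for $|t|\lesssim N$.

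The heart of the matter, and the step I expect to be the main obstacle, is the ``division'' of the approximate identity $f_1f_2\approx e^{-t^2/2}$ into individual estimates on each factor. From $|f_i|\le1$ and $|f_1f_2|\ge e^{-t^2/2}-C(1+|t|)\varep$ one gets $|f_i(t)|\ge\tfrac12e^{-t^2/2}>0$ on a window $|t|\le T\asymp\sqrt{\log(1/\varep)}$, so $f_i$ is nonvanishing there, $\psi_i:=\log f_i$ (normalized by $\psi_i(0)=0$) is well defined, and $\psi_1(t)+\psi_2(t)=-\tfrac{t^2}{2}+\rho(t)$ with $\rho$ small on the window. A crude comparison of $f_1f_2$ with $e^{-t^2/2}$ near $t=0$ keeps $\sigma_1^2+\sigma_2^2$ close to $1$, which together with $\sigma_i^2>0$ forces $0<\sigma_i^2<1+o(1)$. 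Then one plays the one-sided bound $\operatorname{Re}\psi_i=\log|f_i|\le0$ against the nonvanishing lower bound $\log|f_i(t)|\ge-\tfrac{t^2}{2}-O(1)$ and the truncated-variance control $1-|\wt f_i(t)|^2\le\sigma_i^2t^2$, propagating smallness of the $\wt r_i$ from scale $t$ to scale $2t$ via the elementary inequality $1-\cos2u\le4(1-\cos u)$ (hence $1-|f_i(2t)|^2\le4(1-|f_i(t)|^2)$). Since moreover $\wt r_1+\wt r_2$ differs from $\rho$ only by the terms $i(a_1+a_2)t$ and $\tfrac12(\sigma_1^2+\sigma_2^2-1)t^2$, both already small, this pins each $f_i$ to $g_i$ with error $O\bigl((1+|t|)/\sqrt{\log(1/\varep)}\bigr)$ on $|t|\le T$. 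The dichotomy $\min\{\sigma_i^{-1/2},\log\log(e^e/\varep)\}$ in the conclusion reflects optimizing the window width against the size of $\sigma_i$ (small $\sigma_i$ shrinks the usable window) versus a bootstrap that trades a power of $\log(1/\varep)$ for a $\log\log$.

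Finally, feeding the frequency-side estimate into Esseen's inequality completes the proof. It is worth noting why the classical proof of Cram\'er's theorem does not directly quantify: there one uses that an analytic characteristic function extends to an entire function of order $2$ and invokes Hadamard factorization, whereas here no analyticity of the $F_i$ is assumed, so the rigidity of $f_1f_2=e^{-t^2/2}$ must be extracted from the soft characteristic-function estimates above; tracking absolute constants and the sharp $\sigma_i$-dependence in that step is the genuinely delicate point, and is exactly what is carried out in \cite{Golinskiui1991,Senatov1976,Bobkov2016}.
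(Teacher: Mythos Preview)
The paper does not contain a proof of this theorem: it is quoted verbatim as \cite[Theorem~2.2]{Bobkov2016}, with the surrounding text attributing the result to \cite{Golinskiui1991,Senatov1976}. There is therefore nothing in the paper to compare your argument against; the authors use the statement as a black box in the proofs of Theorems~\ref{mainthm4} and~\ref{mainthm5}.

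As to the content of your sketch: the overall architecture (tail control from the median-zero hypothesis, passage to characteristic functions, an Esseen smoothing inequality, and a ``division'' of the approximate factorization $f_1f_2\approx e^{-t^2/2}$ on a frequency window of width $T\asymp\sqrt{\log(1/\varep)}$) is indeed the classical route to quantitative Cram\'er theorems, and is the framework used in the references cited. However, what you label the ``heart of the matter'' is not actually carried out in your write-up. The step from $\psi_1+\psi_2=-t^2/2+\rho$ with small $\rho$ to individual control of each $\psi_i$ is genuinely delicate: one-sided information such as $\operatorname{Re}\psi_i\le0$ together with the doubling inequality $1-|f_i(2t)|^2\le4(1-|f_i(t)|^2)$ does not by itself prevent, say, large cancelling imaginary parts in $\psi_1$ and $\psi_2$, and it is precisely here that the original proofs require careful complex-analytic or harmonic estimates (and where the $\min\{\sigma_i^{-1/2},\log\log(e^e/\varep)\}$ factor emerges from a nontrivial optimization). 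Your final paragraph correctly identifies this as the hard step and defers to \cite{Golinskiui1991,Senatov1976,Bobkov2016}, so your proposal is best read as an informed outline rather than a self-contained proof. Since the present paper makes no claim to reprove the result, that is consistent with how the theorem is used here.
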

This result is more or less restrictive in a sense that the medians are zero and $\sigma_{i}$, $i=1,2$ depend on $\varep$. A more general version of the stability result was obtained in \cite{Bobkov2016}. 

\begin{theorem}[{\cite[Theorem 2.3]{Bobkov2016}}]\label{Bobkov2}
	Let $X_{1}$, $X_{2}$ be independent random variables with $\E[X_{1}]=\E[X_{2}]=0$ and $\Var[X_{1}+X_{2}]=1$. For $i=1,2$,  let $F_{i}$ be the distribution function for $X_{i}$ and $v_{i}^{2}=\Var(X_{i})$. If  $\dkol{F_{1}\ast F_{2},\Phi_{1}}\leq\varep<1$, then there exists a constant $C$ such that
	\begin{eqnarray*}
		\dkol{F_{i},\Phi_{v_{i}}}\leq \frac{C}{v_{i}\sqrt{\log\frac{1}{\varep}}}\min\{\frac{1}{\sqrt{v_{i}}},\log\log\frac{e^{e}}{\varep}\}
	\end{eqnarray*}
	for $i=1,2$.
\end{theorem}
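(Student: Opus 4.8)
The plan is to make the classical analytic-continuation proof of Cram\'er's theorem quantitative, working in the stated normalization $\E[X_1]=\E[X_2]=0$, $\Var(X_1+X_2)=1$. Write $\varphi_i(t)=\E[e^{itX_i}]$, so that $\varphi_1\varphi_2$ is the characteristic function of $X_1+X_2$ and $e^{-t^{2}/2}$ that of $\Phi_1$. The first step is to turn the Kolmogorov hypothesis into an estimate on characteristic functions: integrating by parts and using the Gaussian tail of $\Phi_1$ together with a Chebyshev tail bound for $X_1+X_2$, the bound $\dkol{F_1\ast F_2,\Phi_1}\le\varep$ yields $\abs{\varphi_1(t)\varphi_2(t)-e^{-t^{2}/2}}\le\eta(t)$ on $\abs{t}\le T$, where $T\asymp\sqrt{\log(1/\varep)}$ and $\eta(t)=\varep\cdot\mathrm{poly}(t)$. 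The logarithmic size of $T$ is forced by the Gaussian tail and is precisely what produces the final rate $1/\sqrt{\log(1/\varep)}$ when the argument is reversed at the end.

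Next I would use the trivial bound $\abs{\varphi_i}\le 1$: on the slightly shorter interval $\abs{t}\le T'$ on which $e^{-(T')^{2}/2}$ still dominates $\eta$, the product estimate forces $\abs{\varphi_i(t)}\ge e^{-t^{2}/2}-\eta(t)>0$, so neither $\varphi_i$ vanishes there and one may choose a continuous branch $u_i(t):=\log\varphi_i(t)$ with $u_i(0)=0$. Since $\E[X_i]=0$ gives $u_i'(0)=0$, and the second derivative at $0$ is $-v_i^{2}$ with $v_i^{2}=\Var(X_i)$ and $v_1^{2}+v_2^{2}=1$, one writes $u_i(t)=-\tfrac12 v_i^{2}t^{2}+r_i(t)$ with $r_i(t)=O(t^{3})$ near the origin, and the additive relation gives $r_1(t)+r_2(t)=O(\eta(t)e^{t^{2}/2})$ on $\abs{t}\le T'$.

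The heart of the matter is to promote this near-cancellation of $r_1+r_2$ to an individual bound $\abs{r_i(t)}\le\mathrm{err}\cdot(1+t^{2})$ on $\abs{t}\le T'$. Because each $r_i$ is the logarithm of a characteristic function, its low-order behavior at $0$ is pinned down by the (truncated) moments and its derivatives are controlled on $\abs{t}\le T'$; a bootstrap argument then propagates the control of $r_1+r_2$ into control of each $r_i$ separately. This is the effective replacement for the Hadamard-factorization / growth-comparison step in the classical proof, and is where essentially all of the work lies. Granting it, on $\abs{t}\le T'$ one has
$$
\abs{\varphi_i(t)-e^{-v_i^{2}t^{2}/2}}=e^{-v_i^{2}t^{2}/2}\abs{e^{r_i(t)}-1}\le\mathrm{err}\cdot(1+t^{2})e^{-v_i^{2}t^{2}/2}.
$$

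Finally I would apply Esseen's smoothing inequality in the forward direction,
$$
\dkol{F_i,\Phi_{v_i}}\le\frac{1}{\pi}\int_{-T'}^{T'}\frac{\abs{\varphi_i(t)-e^{-v_i^{2}t^{2}/2}}}{\abs{t}}\,dt+\frac{c}{T'},
$$
where the integral is $O(\mathrm{err})$ by the previous display and the tail term is $O(1/\sqrt{\log(1/\varep)})$. The two-regime factor $\min\{v_i^{-1/2},\log\log(e^{e}/\varep)\}$ in the conclusion arises from splitting according to whether $v_i$ is bounded below or is itself small: when $v_i$ is tiny, $X_i$ is already concentrated and the crude Chebyshev bound $\dkol{F_i,\Phi_{v_i}}\lesssim v_i^{-1/2}$ is the sharper of the two, so one takes the minimum. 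I expect the main obstacle to be the bootstrap of the third paragraph — keeping the non-vanishing of $\varphi_i$ and its quadratic approximation uniform out to $\abs{t}\asymp\sqrt{\log(1/\varep)}$, because near the endpoint the amplification $e^{t^{2}/2}$ makes the error term marginal, so the optimal cutoff $T'$ and all the implied constants must be tracked with care.
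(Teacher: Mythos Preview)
The paper does not prove this statement at all: Theorem~\ref{Bobkov2} is quoted verbatim from \cite[Theorem~2.3]{Bobkov2016} and used as a black box in the proof of Theorem~\ref{mainthm5}. There is therefore no ``paper's own proof'' against which to compare your proposal.

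On the merits of your sketch: the overall architecture (pass to characteristic functions, take logarithms on the interval where $\varphi_1\varphi_2$ stays bounded away from zero, then return via Esseen smoothing) is indeed the standard route for quantitative Cram\'er--type results. But the step you flag as ``where essentially all of the work lies'' is not actually carried out. Knowing that $r_1(t)+r_2(t)$ is small on $|t|\le T'$ does not by itself give control of each $r_i$ separately; your justification (``low-order behavior at $0$ is pinned down \dots\ a bootstrap argument then propagates the control'') is a description of what one would like to happen, not a mechanism. The classical argument at this point exploits the one-sided constraint $\mathrm{Re}\,u_i(t)=\log|\varphi_i(t)|\le 0$ together with analytic continuation to a complex strip (so that growth estimates of Hadamard/Phragm\'en--Lindel\"of type can be brought in), and it is precisely this analytic-extension step that produces the separation of the two summands. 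A purely real-variable bootstrap of the kind you describe does not obviously close, and you should not expect to complete the proof without addressing this.

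A smaller point: your explanation of the factor $\min\{v_i^{-1/2},\log\log(e^{e}/\varep)\}$ is inverted. When $v_i$ is tiny, $v_i^{-1/2}$ is large, so the minimum selects the $\log\log$ term, not the other way around; and the claim ``$\dkol{F_i,\Phi_{v_i}}\lesssim v_i^{-1/2}$'' is vacuous since the Kolmogorov distance is at most $1$. The two regimes instead reflect whether the Esseen integral or the truncation/tail contribution dominates in the underlying estimate.
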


The following lemma gives an interpolation estimate under a second moment assumption, which allows to connect the stability of Cram\'er's theorem with that of the LSI. 
\begin{lemma}\label{L1L2lemma}
	Let $u$ be a nonnegative function in $L^{1}(dx)\cap L^{2}(dx)$ such that 
	\begin{eqnarray*}
		\int_{\R}x^{2}u(x)dx=k\norm{u}_{1}<\infty
	\end{eqnarray*}
	for some $k>0$. Then we have $\norm{u}_{1}\leq e^{\frac{k+1}{2}}\norm{u}_{2}$.
\end{lemma}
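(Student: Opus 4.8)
The plan is to read the inequality $\norm{u}_{1}\leq e^{(k+1)/2}\norm{u}_{2}$ as an entropy estimate. First I would normalize: replacing $u$ by $u/\norm{u}_{1}$ changes neither the hypothesis $\int_{\R}x^{2}u\,dx=k\norm{u}_{1}$ nor the conclusion, so I may assume that $\rho:=u$ is a probability density on $\R$ with $\int_{\R}x^{2}\rho\,dx=k$. Introducing the probability measure $d\mu:=\rho\,dx$, I would combine the identity
\begin{eqnarray*}
	\norm{u}_{2}^{2}=\int_{\R}\rho^{2}\,dx=\int_{\R}\rho\,d\mu
\end{eqnarray*}
with Jensen's inequality for the concave function $\log$ on the probability space $(\R,\mu)$, namely
\begin{eqnarray*}
	\log\norm{u}_{2}^{2}=\log\int_{\R}\rho\,d\mu\geq \int_{\R}(\log\rho)\,d\mu=\int_{\R}\rho\log\rho\,dx,
\end{eqnarray*}
to deduce $\norm{u}_{1}\leq \exp\big(-\tfrac{1}{2}\int_{\R}\rho\log\rho\,dx\big)\norm{u}_{2}$.

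Next I would bound the negative entropy $-\int_{\R}\rho\log\rho\,dx$ by the second moment through a Gaussian comparison. Let $\varphi(x)=(2\pi)^{-1/2}e^{-|x|^{2}/2}$. From $\log t\geq 1-\tfrac{1}{t}$ one gets $\rho\log(\rho/\varphi)\geq \rho-\varphi$ pointwise, and integrating gives $\int_{\R}\rho\log(\rho/\varphi)\,dx\geq 0$; expanding $\log\varphi$ and using $\int_{\R}\rho\,dx=1$ and $\int_{\R}x^{2}\rho\,dx=k$, this reads
\begin{eqnarray*}
	-\int_{\R}\rho\log\rho\,dx\leq -\int_{\R}\rho\log\varphi\,dx=\tfrac{1}{2}\log(2\pi)+\tfrac{k}{2}.
\end{eqnarray*}
Combining with the previous step yields $\norm{u}_{1}\leq (2\pi)^{1/4}e^{k/4}\norm{u}_{2}$.

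Finally I would absorb the explicit constant: $(2\pi)^{1/4}e^{k/4}\leq e^{(k+1)/2}$ is equivalent to $\tfrac{1}{2}\log(2\pi)\leq \tfrac{k}{2}+1$, which holds for every $k\geq 0$ because $\log(2\pi)<2$, and this gives the claim. The one place that requires care is the first step: one must check that $\int_{\R}\rho\log\rho\,dx$ is a well-defined real number, so that Jensen's inequality applies and the subsequent manipulations are legitimate. Its positive part is integrable since $\rho\log\rho\leq \rho^{2}$ on $\{\rho\geq 1\}$ and $\rho\in L^{2}$ — this is precisely where $L^{2}$ rather than merely $L^{1}$ integrability enters — while its negative part is finite by the Gaussian comparison of the second step. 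Everything else is elementary.
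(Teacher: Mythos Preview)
Your proof is correct and follows essentially the same route as the paper's: normalize to a probability density, bound the differential entropy from below by comparison with a Gaussian (nonnegativity of relative entropy), and combine this with the inequality $\log\norm{u}_{2}^{2}\ge \int \rho\log\rho\,dx$. The only cosmetic differences are that the paper derives this last inequality from the convexity of $p\mapsto \log\norm{u}_{p}^{p}$ (via H\"older) rather than by a direct application of Jensen to $\log$, and uses the reference Gaussian $\tfrac{1}{\sqrt{\pi}}e^{-x^{2}}$ instead of the standard one, arriving immediately at the constant $e^{(k+1)/2}$ where you first obtain the sharper $(2\pi)^{1/4}e^{k/4}$ and then absorb it.
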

\begin{proof}
	Let $p(x)=u(x)/\norm{u}_{1}$ and $q(x)=\frac{1}{\sqrt{\pi}}e^{-x^{2}}$. Since $\varphi=x\log x$ for $x\geq 0$ is convex ($\varphi(0)=0$), one can see by Jensen's inequality that
	\begin{eqnarray*}
		\int_{\R}p(x)\log\frac{p(x)}{q(x)}dx
		&=& \int_{\R}\varphi(\frac{p(x)}{q(x)})q(x)dx \\
		&\geq & \varphi(\int_{\R}p(x)dx) \\
		&=&0.
	\end{eqnarray*}
	So, we have
	\begin{eqnarray}\label{entropybound}
		\int_{\R}p(x)\log p(x)dx
		&\geq& \int_{\R}p(x)\log q(x)dx\\
		&=&-\int_{\R}x^{2}p(x)dx-\frac{1}{2}\log\pi\nonumber\\
		&\geq& -(k+1).\nonumber
	\end{eqnarray}
	Let $1\leq p_{0},p_{1}\leq 2$, $\theta\in(0,1)$ and $\frac{1}{p_{\theta}}=\frac{1-\theta}{p_{0}}+\frac{\theta}{p_{1}}$. It follows from H\"older inequality that
	\begin{eqnarray}\label{logconvex}
		\norm{u}_{p_{\theta}}\leq \norm{u}_{p_{0}}^{1-\theta}\norm{u}_{p_{1}}^{\theta}.
	\end{eqnarray}
	This implies that the map $p\mapsto J(p):=\log\norm{f}_{p}^{p}$ is convex on $[1,2]$. On the other hand, the derivative of $J(p)$ is given by
	\begin{eqnarray*}
		\frac{d}{dp}J(p)
		=\frac{1}{\norm{u}_{p}^{p}}\int_{\R}|u|^{p}\log|u|dx.
	\end{eqnarray*}
	By the convexity of $J(p)$, we have $J(2)-J(1)\geq J'(1)$. So, we apply \eqref{entropybound} to obtain
	\begin{eqnarray*}
		\log\norm{u}_{2}^{2}-\log\norm{u}_{1}
		&\geq& \frac{1}{\norm{u}_{1}}\int_{\R}|u|\log|u|dx \\
		&=& \int_{\R}p(x)\log p(x)dx+\log\norm{u}_{1}\\
		&>& -(k+1)+\log\norm{u}_{1},
	\end{eqnarray*}
	which yields the desired result.
\end{proof}

\begin{proof}[Proof of Theorem \ref{mainthm4} ]
	Let $h(x)=\wt{f}(x)g(x)$ and $\wt{f}(x)=f(\sqrt{2\pi}x)$ then one can easily see that
	\begin{eqnarray}\label{hnormalization}
		\int_{\R}|h|^{2}dx=1, \quad
		\int_{\R}hdx=2^{\frac{1}{4}}, \quad
		\int_{\R}x^{2}hdx=\frac{2^{-\frac{3}{4}}k}{\pi}.
	\end{eqnarray}
	Let $X_{1}, X_{2}$ be i.i.d. random variables with density $p(x)=\frac{2^{-1/4}}{\sqrt{\pi}}h(\frac{x}{\sqrt{\pi}})$ and distribution function $F$. Note that $F$ has median zero and $\Var[X_{1}]=\frac{k}{2}$. 
	Since Kolmogorov distance is bounded by total variation, one can see that
	\begin{eqnarray*}
		\dkol{F\ast F,\Phi_{1}}
		\leq \frac{1}{2}\int_{\R}|p\ast p(x)-\gamma(x)|dx.
	\end{eqnarray*}
	Since we have $h\ast h(x) = \sqrt{2\pi} p\ast p(\sqrt{\pi} x)$ and $g\ast g(x)=\sqrt{2\pi}\gamma(\sqrt{\pi}x)$, we obtain
	\begin{eqnarray*}
		\dkol{F\ast F,\Phi_{1}}
		\leq \frac{1}{2\sqrt{2}}\int_{\R}|h\ast h(x)-g\ast g(x)|dx.
	\end{eqnarray*}
	Let $u:=h\ast h-g\ast g$, then we have $\norm{u}_{1}\leq 2\sqrt{2}$ and
	\begin{eqnarray*}
		\int_{\R}x^{2}|u|dx
		&\leq&  \int_{\R}x^{2}(h\ast h)(x)dx + \int_{\R}x^{2}(g\ast g)(x)dx \\
		&\leq & 2^{\frac{5}{4}}\Paren{\int_{\R}x^{2}h(x)dx + \int_{\R}x^{2}g(x)dx } \\
		&\leq &C(k+1).
	\end{eqnarray*}
	 By Lemma \ref{L1L2lemma}, we have $\norm{u}_{1}\leq C_{k}\norm{u}_{2}$ where $C_{k}>0$ depends only on $k$. Combining our observation and Theorem \ref{FFthm}, we obtain
	\begin{eqnarray*}
		\dkol{F\ast F,\Phi_{1}}
		\leq C_{k}
			(\norm{h-g}_{\frac{6}{5}}^{2}+\norm{h-g}_{2})^{\frac{3}{4}}\delta_{c}(\wt{f})^{\frac{1}{8}}
	\end{eqnarray*}
	where $\wt{f}=f(\sqrt{2\pi}x)$. 
	Note that $\delta_{c}(\wt{f})=\delta(v)$.
	It follows from  \eqref{logconvex} and \eqref{hnormalization} that $(\norm{h-g}_{\frac{6}{5}}^{2}+\norm{h-g}_{2})^{\frac{3}{4}}$ is bounded by a numerical constant and that
	\begin{eqnarray*}
		\dkol{F\ast F, \Phi_{1}}
		\leq C_{k}\delta(v)^{\frac{1}{8}}.
	\end{eqnarray*}
	
	Choose $\varep_{0}>0$ such that $C_{k}\varep_{0}^{\frac{1}{8}}< 1$. Let $\varep>0$ be such that $\delta(v)<\varep<\varep_{0}$, and put $\eta=C_{k}\varep^{\frac{1}{8}}$, $N=N(\eta)=1+\sqrt{2\log(1/\eta)}$ and
	\begin{eqnarray*}
		\sigma(\eta)^{2}=\int_{-N(\eta)}^{N(\eta)}x^{2}p(x)dx.
	\end{eqnarray*}
	Note that $\sigma(\eta)^{2}\nearrow \Var[X_{1}]=\frac{1}{2}m_{2}(\mu)$ as $\eta\to 0$. So, we choose $\varep_{0}$ small enough so that $\frac{1}{4}m_{2}(\mu)<\sigma(\eta)^{2}$ for all $\varep<\varep_{0}$. It then follows from Theorem \ref{Bobkov} that
	\begin{eqnarray*}
		\dkol{F,\Phi_{\sigma(\eta)}}
		&\leq &	\frac{C}{\sigma(\eta)\sqrt{\log(1/\eta)}}\min\{\frac{1}{\sqrt{\sigma(\eta)}},\log\log\frac{e^{e}}{\eta}\}\\
		&\leq & \frac{C}{\sigma(\eta)^{\frac{3}{2}}\sqrt{\log(1/\eta)}}\\
		&\leq &	\frac{C}{m_{2}(\mu)^{\frac{3}{4}}\sqrt{\frac{1}{8}\log(\frac{1}{\varep})-\log C_{k}}}\\
		&\leq & \frac{C_{k}}{\sqrt{\log\frac{1}{\varep}}}.
	\end{eqnarray*}
	By change of variables, we have $\dkol{F,\Phi_{\sigma(\eta)}}=\dkol{\mu,\gamma_{\varep}}$ where
	\begin{eqnarray*}
		d\gamma_{\varep}
		= \frac{1}{\sqrt{4\pi\sigma(\eta)^{2}}}e^{-\frac{|x|^{2}}{4\sigma(\eta)^{2}}}dx,
	\end{eqnarray*}
	which yields \eqref{eq:thm4_1}. 
\end{proof}
\begin{proof}[Proof of Theorem \ref{mainthm5} ]
	Let $h(x)=\wt{f}(x)g(x)$ and $\wt{f}(x)=f(\sqrt{2\pi}x)$.
	Let $X_{1}, X_{2}$ be i.i.d. random variables with density $p(x)=\frac{2^{-1/4}}{\sqrt{\pi}}h(\frac{x}{\sqrt{\pi}})$ and distribution function $F$.
	Since $m_{2}(\mu)=1$, we have $\Var(X_{1})=\Var(X_{2})=\frac{1}{2}$. The same argument then leads to
	\begin{eqnarray*}
		\dkol{F\ast F, \Phi_{1}}
		\leq c_{1}\delta_{c}(\wt{f})^{\frac{1}{8}}
		= c_{1}\delta(v)^{\frac{1}{8}}
	\end{eqnarray*}
	for some numerical constant $c_{1}$. So, we choose $\varep_{0}>0$ such that $c_{1}\varep_{0}^{\frac{1}{8}}<1$. Assume $\delta(v)<\varep<\varep_{0}$. We apply Theorem \ref{Bobkov2} to obtain
	\begin{eqnarray*}
		\dkol{F,\Phi_{\frac{1}{\sqrt{2}}}}
		<\frac{c_{2}}{\sqrt{\log\frac{1}{\varep}}}.
	\end{eqnarray*}
	Note that $\dkol{F,\Phi_{\frac{1}{\sqrt{2}}}}=\dkol{\mu,\gamma}$ by change of variables. Since $\Psi(s)$ is the inverse of the map $t\mapsto \frac{1}{\sqrt{\log\frac{1}{t}}}$, we have $\delta(v)\geq \Psi(c_{3}\dkol{\mu,\gamma})$ for some constant $c_{3}$. Since the map $\Psi$ is bounded, there exists a constant $c_{4}$ such that $\delta(v)\geq c_{4}\Psi(c_{3}\dkol{\mu,\gamma})$ as desired.
\end{proof}

\section{Probability metrics}
This section is devoted to introduce probability metrics and provide their relations. The following is  based on \cite{Bobkov2016,Gibbs2002,Villani2003}.

\begin{definition}
	Let $\Omega$ be a measurable space.
	Let $\lambda$ be a measure on $\Omega$. For probability measures $d\mu=fd\lambda$ and $d\nu=gd\lambda$, the Hellinger distance is defined by 
	\begin{eqnarray*}
		\dhel(\mu,\nu)=\Big(\int_{\Omega}|\sqrt{f}-\sqrt{g}|^{2}d\lambda\Big)^{\frac{1}{2}}.
	\end{eqnarray*}	
\end{definition}
Note that $\dhel$ is a metric and $0\leq \dhel(\mu,\nu)\leq \sqrt{2}$. 

\begin{definition}
	Let $\Omega$ be a measurable space.
	Let $\mu$ and $\nu$ be probability measures on $\Omega$.
	The total variation distance is 
	\begin{eqnarray*}
		\dtv{\mu,\nu}
		=\sup_{h}\Abs{\int_{\Omega}hd\mu-\int_{\Omega}hd\nu}
	\end{eqnarray*}	
	where the supremum is taken over all measurable functions $h:\Omega\to\R$ with $|h(x)|\leq 1$.
\end{definition}

\begin{definition}
	Let $(\Omega,d)$ be a Polish space. Let $\mu$ and $\nu$ be probability measures on $\Omega$. For a Borel set $B$ and $\varep>0$, $B^{\varep}=\{x\in\Omega:\inf_{y\in B}d(x,y)\leq \varep\}$.
	The Prokhorov metric is defined by
	\begin{eqnarray*}
		\dpro(\mu,\nu)
		=\inf\{\varep>0: \mu(B)\leq \nu(B^{\varep})+\varep 
		\text{ for all Borel sets } B\}.
	\end{eqnarray*}
\end{definition}
If $X$ and $Y$ random variables with the laws $\mu$ and $\nu$, then it follows from Strassen's theorem that
\begin{eqnarray*}
	\dpro(\mu,\nu)
	=\inf_{\P}\{\varep>0: \P(d(X,Y)>\varep)<\varep\}
\end{eqnarray*}
where the infimum is taken over all joint probability $\P$ of $X$ and $Y$. Similarly, we have
\begin{eqnarray*}
	\dtv{\mu,\nu}
	=\inf\bE[1_{X\neq Y}]
	=\sup\{\mu(F)-\nu(F):F\text{ closed}\}.
\end{eqnarray*}
\begin{definition}
	The Kolmogorov distance between two probability measures $\mu$ and $\nu$ on $\R$ is given by
	\begin{eqnarray*}
		\dkol{\mu,\nu}=\sup_{x\in\R}|\mu((-\infty,x])-\nu((-\infty,x])|.
	\end{eqnarray*}	
\end{definition}
If $F$ and $G$ are distribution functions of $\mu$ and $\nu$, then we denote by $\dkol{F,G}=\dkol{\mu,\nu}$. One can see that $0\leq \dkol{\mu,\nu}\leq 1$.

\begin{definition}
	Let $\mu$ and $\nu$ be probability measures on $\R$ with distribution functions $F$ and $G$.
	The L\`evy metric is defined by
	\begin{eqnarray*}
		\dlev(\mu,\nu)=\dlev(F,G)
		=\inf\{\varep>0: G(x-\varep)-\varep\leq F(x)\leq G(x+\varep)+\varep, \forall x\in\R\}.
	\end{eqnarray*}
\end{definition}

\begin{proposition}\label{probmetricbound_general}
	Let $\mu$ and $\nu$ be probability measures on $\R$, then we have
	\begin{eqnarray*}
		\dlev(\mu,\nu)
		\leq \min\{\dkol{\mu,\nu}, \dpro(\mu,\nu)\}
		\leq \max\{\dkol{\mu,\nu}, \dpro(\mu,\nu)\}
		\leq \min\{\dtv{\mu,\nu}, \sqrt{W_{1}(\mu,\nu)}\}.
	\end{eqnarray*}
\end{proposition}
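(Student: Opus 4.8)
The plan is to reduce the whole chain to a small number of pairwise comparisons. Apart from the trivial $\min\{\dkol{\mu,\nu},\dpro(\mu,\nu)\}\le\max\{\dkol{\mu,\nu},\dpro(\mu,\nu)\}$, one needs the two lower bounds $\dlev(\mu,\nu)\le\dkol{\mu,\nu}$ and $\dlev(\mu,\nu)\le\dpro(\mu,\nu)$, together with the four upper bounds $\dkol{\mu,\nu}\le\dtv{\mu,\nu}$, $\dpro(\mu,\nu)\le\dtv{\mu,\nu}$, $\dpro(\mu,\nu)\le\sqrt{W_{1}(\mu,\nu)}$, and $\dkol{\mu,\nu}\le\sqrt{W_{1}(\mu,\nu)}$. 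Throughout I write $F$ and $G$ for the distribution functions of $\mu$ and $\nu$.

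The comparisons with $\dtv$ and with the L\'evy metric are routine. For $\dkol{\mu,\nu}\le\dtv{\mu,\nu}$ it suffices to note that the half-lines $(-\infty,x]$ are among the Borel sets appearing in the supremum defining $\dtv$. For $\dpro(\mu,\nu)\le\dtv{\mu,\nu}$: if $\dtv{\mu,\nu}<\varep$ then $\mu(B)\le\nu(B)+\varep\le\nu(B^{\varep})+\varep$ for every Borel set $B$, since $B\subseteq B^{\varep}$. For $\dlev(\mu,\nu)\le\dkol{\mu,\nu}$, take $\varep=\dkol{\mu,\nu}$ and combine $|F-G|\le\varep$ with the monotonicity of $G$ to get $G(x-\varep)-\varep\le F(x)\le G(x+\varep)+\varep$ for all $x$. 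For $\dlev(\mu,\nu)\le\dpro(\mu,\nu)$, apply the defining inequality of $\dpro$ to $B=(-\infty,x]$, for which $B^{\varep}=(-\infty,x+\varep]$, to obtain $F(x)\le G(x+\varep)+\varep$, and likewise to $\nu,\mu$ with $B=(-\infty,x-\varep]$ to obtain $G(x-\varep)\le F(x)+\varep$.

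For $\dpro(\mu,\nu)\le\sqrt{W_{1}(\mu,\nu)}$, take a coupling $(X,Y)$ of $\mu$ and $\nu$ with $\E[|X-Y|]=W_{1}(\mu,\nu)$. Markov's inequality gives $\P(|X-Y|>a)\le W_{1}(\mu,\nu)/a$ for every $a>0$; choosing $a$ slightly larger than $\sqrt{W_{1}(\mu,\nu)}$ makes the right-hand side strictly below $a$, so by the Strassen form of the Prokhorov metric recorded just above the proposition, $\dpro(\mu,\nu)\le a$, and letting $a\downarrow\sqrt{W_{1}(\mu,\nu)}$ closes this step.

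The remaining bound $\dkol{\mu,\nu}\le\sqrt{W_{1}(\mu,\nu)}$ is where I expect the main difficulty. The natural tool is the one-dimensional identity $W_{1}(\mu,\nu)=\int_{\R}|F(t)-G(t)|\,dt$: writing $d:=\dkol{\mu,\nu}$ and fixing a point $x_{0}$ at (or near) which $|F-G|$ attains $d$, one wants to show that $|F-G|\gtrsim d$ on an interval of length $\gtrsim d$ about $x_{0}$ and then integrate to conclude $W_{1}(\mu,\nu)\gtrsim d^{2}$. Monotonicity of $F$ propagates the lower bound on one side of $x_{0}$, but on the other side it must be controlled through a modulus of continuity of one of the distribution functions; in the applications the comparison measure is Gaussian, whose distribution function is Lipschitz, which is exactly what permits the localization and determines the implied constant. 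Executing this quantitative estimate, with care about the admissible constants, is the heart of the argument.
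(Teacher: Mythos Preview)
The paper does not actually prove this proposition; it is listed in Appendix~B as background material with references to \cite{Bobkov2016,Gibbs2002,Villani2003}, so there is no ``paper's own proof'' to compare against.

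Your arguments for the five comparisons $\dlev\le\dkol{}$, $\dlev\le\dpro$, $\dkol{}\le\dtv{}$, $\dpro\le\dtv{}$, and $\dpro\le\sqrt{W_{1}}$ are all correct and standard.

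The remaining inequality $\dkol{\mu,\nu}\le\sqrt{W_{1}(\mu,\nu)}$, however, is \emph{false} for general probability measures on $\R$, and your instinct that something extra is needed is exactly right---but you should say so plainly rather than presenting it as a technical difficulty to be overcome ``with care about the admissible constants.'' A counterexample: take $\mu=\delta_{0}$ and $\nu=\delta_{\varep}$ for small $\varep>0$; then $\dkol{\mu,\nu}=1$ while $W_{1}(\mu,\nu)=\varep$, so $\dkol{\mu,\nu}>\sqrt{W_{1}(\mu,\nu)}$ once $\varep<1$. Your own diagnosis identifies the obstruction: a bound of the form $\dkol{\mu,\nu}\le C\sqrt{W_{1}(\mu,\nu)}$ holds only when one of the two distribution functions has a quantitative modulus of continuity (e.g.\ is Lipschitz, as for the Gaussian), and the constant then depends on that modulus. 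Thus the last inequality in the chain cannot be established in the stated generality; the intended content is presumably $\dpro(\mu,\nu)\le\sqrt{W_{1}(\mu,\nu)}$ together with $\max\{\dkol{\mu,\nu},\dpro(\mu,\nu)\}\le\dtv{\mu,\nu}$, or else an additional regularity hypothesis on one of the measures.
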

\begin{proposition}
	Let $\mu$ be a probability measure on $\R$ and $\gamma$ the standard Gaussian measure on $\R$, then 
	\begin{eqnarray*}
		\dkol{\mu,\gamma}\leq  2\dpro(\mu,\gamma).
	\end{eqnarray*}
\end{proposition}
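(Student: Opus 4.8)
The plan is to route the comparison through the Lévy metric, for which the inequality $\dlev(\mu,\gamma)\le\dpro(\mu,\gamma)$ is already available from Proposition~\ref{probmetricbound_general}. The only extra input needed is that the standard Gaussian distribution function $\Phi$ is Lipschitz with constant at most $1$; this is immediate from $\sup_{t\in\R}\tfrac{1}{\sqrt{2\pi}}e^{-t^{2}/2}=\tfrac{1}{\sqrt{2\pi}}<1$, which gives $0\le\Phi(x+\varep)-\Phi(x)\le\varep$ and $0\le\Phi(x)-\Phi(x-\varep)\le\varep$ for every $x\in\R$ and $\varep>0$.

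First I would fix any $\varep>\dlev(\mu,\gamma)$ and let $F$ denote the distribution function of $\mu$. By the definition of the Lévy metric, $\Phi(x-\varep)-\varep\le F(x)\le\Phi(x+\varep)+\varep$ for all $x\in\R$, so, using the Lipschitz bound on $\Phi$,
\[
|F(x)-\Phi(x)|\le\max\big\{\Phi(x+\varep)-\Phi(x),\ \Phi(x)-\Phi(x-\varep)\big\}+\varep\le 2\varep
\qquad\text{for all }x\in\R.
\]
Taking the supremum over $x$ yields $\dkol{\mu,\gamma}\le 2\varep$, and letting $\varep\downarrow\dlev(\mu,\gamma)$ gives $\dkol{\mu,\gamma}\le 2\,\dlev(\mu,\gamma)$.

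Finally I would invoke Proposition~\ref{probmetricbound_general} to obtain $\dlev(\mu,\gamma)\le\dpro(\mu,\gamma)$ and combine the two estimates to conclude $\dkol{\mu,\gamma}\le 2\,\dpro(\mu,\gamma)$. There is essentially no obstacle here: the computation is routine, and the only point deserving a word is that the infimum defining $\dlev$ is approached from above, which is harmless since the displayed inequalities hold for every admissible $\varep$ and survive the passage to the limit (by right-continuity of $F$ and continuity of $\Phi$). It is worth noting that running the same argument with the sharper estimate $\sup_{t}\Phi'(t)=\tfrac{1}{\sqrt{2\pi}}$ in place of the crude bound $1$ produces the better constant $1+\tfrac{1}{\sqrt{2\pi}}<2$; we content ourselves with the constant $2$ for simplicity.
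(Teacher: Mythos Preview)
Your argument is correct. The paper does not supply its own proof of this proposition: it sits in the appendix of background facts on probability metrics, where all the propositions are stated without proof and attributed to the references \cite{Bobkov2016,Gibbs2002,Villani2003}. What you wrote is the standard derivation---when one of the two measures has a bounded density (here $\sup_t\Phi'(t)=1/\sqrt{2\pi}<1$), the Kolmogorov distance is controlled by $(1+\sup\Phi')$ times the L\'evy distance, and then one invokes $\dlev\le\dpro$ from Proposition~\ref{probmetricbound_general}. Your closing remark that the constant can be sharpened to $1+1/\sqrt{2\pi}$ is also accurate and worth keeping.
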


\begin{proposition}[{\cite[Proposition A.1.2]{Bobkov2016}}]\label{probmetricbound_W_1}
	Let	$\mu,\nu \in\cP_{2}^{M}(\R)$, then
	\begin{eqnarray*}
		W_{1}(\mu,\nu) 
		&\leq& 2\dlev(\mu,\nu)+2\sqrt{M}\dlev(\mu,\nu)^{1/2}, \\
		W_{1}(\mu,\nu)
		&\leq& 4\sqrt{M}\dkol{\mu,\nu}^{1/2}.
	\end{eqnarray*}
\end{proposition}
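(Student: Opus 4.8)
The approach is to use the one-dimensional representation $W_{1}(\mu,\nu)=\int_{\R}|F(x)-G(x)|\,dx$ (see, e.g., \cite{Villani2003}), where $F$ and $G$ are the distribution functions of $\mu$ and $\nu$, and to estimate the integrand on a bounded window $[-T,T]$ — where the L\'evy (resp.\ Kolmogorov) condition forces $|F-G|$ to be small — and on the complementary tails — where the bound $\mu,\nu\in\cP_{2}^{M}(\R)$ forces it to decay — optimizing over the cutoff $T$ at the end.

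For the first inequality I would proceed as follows. Fix $\varep>\dlev(\mu,\nu)$; by definition of the L\'evy metric, $G(x-\varep)-\varep\le F(x)\le G(x+\varep)+\varep$ for every $x$, which gives the pointwise estimate
$$|F(x)-G(x)|\le\varep+\bigl(G(x+\varep)-G(x-\varep)\bigr).$$
Integrating this over $[-T,T]$ and applying Fubini to the increment term — whose integral equals $\int_{\R}\operatorname{Leb}\bigl(\{x\in[-T,T]:|x-t|<\varep\}\bigr)\,d\nu(t)\le 2\varep$ — bounds the window contribution by $2T\varep+2\varep$. On $\{|x|>T\}$ I would use $|F(x)-G(x)|\le\mu(\{|y|\ge|x|\})+\nu(\{|y|\ge|x|\})$ together with Chebyshev's inequality $\mu(\{|y|\ge s\})\le m_{2}(\mu)/s^{2}\le M/s^{2}$ (and the same for $\nu$), which yields $\int_{|x|>T}|F(x)-G(x)|\,dx\le CM/T$ for an absolute constant $C$. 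Hence $W_{1}(\mu,\nu)\le 2\varep+2T\varep+CM/T$; choosing $T$ of order $\sqrt{M/\varep}$ to balance the last two terms and letting $\varep\downarrow\dlev(\mu,\nu)$ gives an estimate of the shape $W_{1}(\mu,\nu)\le 2\,\dlev(\mu,\nu)+C'\sqrt{M}\,\dlev(\mu,\nu)^{1/2}$, and a careful treatment of the tails and of the balancing (as in \cite{Bobkov2016}) yields the constants in the statement.

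The second inequality then follows. Since $|F(x)-G(x)|\le\dkol{\mu,\nu}$ for all $x$, one has $F(x)\le G(x)+\dkol{\mu,\nu}\le G(x+\dkol{\mu,\nu})+\dkol{\mu,\nu}$ and, symmetrically, $G(x-\dkol{\mu,\nu})-\dkol{\mu,\nu}\le F(x)$, so the L\'evy condition holds with $\varep=\dkol{\mu,\nu}$ and hence $\dlev(\mu,\nu)\le\dkol{\mu,\nu}$. Moreover $\dkol{\mu,\nu}\le 1\le M$, so $2\dkol{\mu,\nu}\le 2\dkol{\mu,\nu}^{1/2}\le 2\sqrt{M}\,\dkol{\mu,\nu}^{1/2}$; substituting $\dlev(\mu,\nu)\le\dkol{\mu,\nu}$ into the first inequality and collecting the two terms gives
$$W_{1}(\mu,\nu)\le 2\dkol{\mu,\nu}+2\sqrt{M}\,\dkol{\mu,\nu}^{1/2}\le 4\sqrt{M}\,\dkol{\mu,\nu}^{1/2}.$$

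The one genuinely delicate point is the quantitative bookkeeping: after optimizing the cutoff, the window contribution scales like $\sqrt{M\varep}$, so one must check that the $\cP_{2}^{M}$-constraint supplies exactly the required tail decay and that the trade-off between window and tail is organized tightly enough to reach the constant $2$ on the cross term, rather than the slightly larger constant that the crude split produces; the remaining steps are elementary.
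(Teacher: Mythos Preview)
The paper does not supply its own proof of this proposition: it is merely quoted from \cite[Proposition~A.1.2]{Bobkov2016} and stated without argument in Appendix~B, so there is nothing in the paper to compare your attempt against directly.

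That said, your outline is the standard route to such bounds and is essentially the argument one finds in the cited reference: write $W_{1}(\mu,\nu)=\int_{\R}|F-G|\,dx$, control the window $[-T,T]$ via the pointwise inequality $|F(x)-G(x)|\le\varep+\bigl(G(x+\varep)-G(x-\varep)\bigr)$ coming from the L\'evy condition, control the tails by the second-moment bound, and balance. Your Fubini computation for the increment term and your Chebyshev tail estimate are both correct. The deduction of the Kolmogorov bound from the L\'evy bound via $\dlev\le\dkol{\cdot,\cdot}$ is also fine.

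One small caveat: your final step $2\dkol{\mu,\nu}\le 2\sqrt{M}\,\dkol{\mu,\nu}^{1/2}$ uses $M\ge 1$, which is not part of the hypothesis of the proposition as stated (though it is assumed throughout the paper whenever $\cP_{2}^{M}$ appears). And, as you yourself flag, the crude window/tail split with $T\sim\sqrt{M/\varep}$ produces a constant larger than~$2$ on the $\sqrt{M}\,\dlev^{1/2}$ term; getting the exact constant~$2$ requires the sharper bookkeeping carried out in \cite{Bobkov2016}. Neither of these affects the correctness of the method.
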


\begin{proposition}
	Let $\Omega$ be a measurable space.
	Let $\mu$ and $\nu$ be probability measures on $\Omega$, then
	\begin{eqnarray*}
		\dhel(\mu,\nu)^{2}\leq \dtv{\mu,\nu} \leq 2\dhel(\mu,\nu).
	\end{eqnarray*}
\end{proposition}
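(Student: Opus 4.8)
The plan is to reduce both quantities to integrals of densities against a common reference measure. First I would pick a common dominating measure $\lambda$ (e.g.\ $\lambda=\mu+\nu$) and write $d\mu=f\,d\lambda$, $d\nu=g\,d\lambda$ with $f,g\ge 0$ and $\int_\Omega f\,d\lambda=\int_\Omega g\,d\lambda=1$. Using the dual description of the total variation distance from the Preliminaries (the supremum over $|h|\le 1$, attained at $h=\operatorname{sgn}(f-g)$), this gives $\dtv{\mu,\nu}=\int_\Omega|f-g|\,d\lambda$, while by definition $\dhel(\mu,\nu)^2=\int_\Omega(\sqrt f-\sqrt g)^2\,d\lambda=2-2\int_\Omega\sqrt{fg}\,d\lambda$. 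The whole argument then rests on the pointwise factorization $|f-g|=|\sqrt f-\sqrt g|\,(\sqrt f+\sqrt g)$.

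For the lower bound I would simply note that $\sqrt f+\sqrt g\ge|\sqrt f-\sqrt g|$ pointwise, so $|f-g|\ge(\sqrt f-\sqrt g)^2$; integrating against $\lambda$ yields $\dtv{\mu,\nu}\ge\dhel(\mu,\nu)^2$ immediately.

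For the upper bound I would apply the Cauchy--Schwarz inequality to the same factorization:
\[
\int_\Omega|f-g|\,d\lambda=\int_\Omega|\sqrt f-\sqrt g|\,(\sqrt f+\sqrt g)\,d\lambda
\le\Big(\int_\Omega(\sqrt f-\sqrt g)^2\,d\lambda\Big)^{\!1/2}\Big(\int_\Omega(\sqrt f+\sqrt g)^2\,d\lambda\Big)^{\!1/2}.
\]
The first factor is $\dhel(\mu,\nu)$, and expanding the second gives $\int_\Omega(\sqrt f+\sqrt g)^2\,d\lambda=2+2\int_\Omega\sqrt{fg}\,d\lambda=4-\dhel(\mu,\nu)^2\le 4$. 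Hence $\dtv{\mu,\nu}\le 2\,\dhel(\mu,\nu)$, which completes the proof.

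There is no genuine obstacle here---this is a classical pair of inequalities---but the points deserving a sentence each are: checking that the integral formula $\dtv{\mu,\nu}=\int_\Omega|f-g|\,d\lambda$ really matches the $\sup_{|h|\le1}$ definition used in this section; observing that the comparison is independent of the choice of dominating measure $\lambda$, since both $\dhel$ and $\dtv$ are intrinsically defined; and, if desired, recording the slightly sharper bound $\dtv{\mu,\nu}\le\dhel(\mu,\nu)\sqrt{4-\dhel(\mu,\nu)^2}$, which the Cauchy--Schwarz step already produces and which trivially implies the stated estimate $2\,\dhel(\mu,\nu)$.
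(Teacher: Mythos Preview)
Your proof is correct and is the standard argument; there is nothing to compare, since the paper does not supply a proof of this proposition---it is recorded in the appendix as a well-known fact (and the special case $g\equiv 1$ already appears without proof as \eqref{ineq:L1_L2} in the preliminaries). One small remark: be sure to use the appendix's definition $\dtv{\mu,\nu}=\sup_{|h|\le 1}\int h\,(d\mu-d\nu)=\int|f-g|\,d\lambda$, as you do, rather than the $\sup_A|\mu(A)-\nu(A)|$ version from \S2.2, since the two differ by a factor of $2$ and only the former makes the stated inequality come out with these constants.
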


\bibliography{stablsi}
\bibliographystyle{amsplain}

\end{document}